\theoremstyle{plain}
\newtheorem{Thm}{Theorem}[subsection]
\newtheorem{Cor}[Thm]{Corollary}
\newtheorem{Prop}[Thm]{Proposition}
\newtheorem{Lem}[Thm]{Lemma}
\theoremstyle{definition}
\newtheorem{Def}[Thm]{Definition}
\newtheorem{Emp}[Thm]{}
\numberwithin{equation}{section}
\newcommand{\La}{\Lambda}
\newcommand{\B}[1]{\mathbb#1}
\newcommand{\cal}[1]{\mathcal{#1}}
\newcommand{\C}[1]{\cal#1}
\newcommand{\form}[1]{(\ref{Eq:#1})}
\newcommand{\lra}{\longrightarrow}
\newcommand{\hra}{\hookrightarrow}
\newcommand{\wt}{\widetilde}
\newcommand{\wh}{\widehat}
\newcommand{\Gm}{\Gamma}
\newcommand{\gm}{\gamma}
\newcommand{\dt}{\delta}
\newcommand{\Dt}{\Delta}
\newcommand{\bs}{\backslash}
\newcommand{\al}{\alpha}
\newcommand{\rs}[1]{Section \ref{S:#1}}
\newcommand{\rl}[1]{Lemma \ref{L:#1}}
\newcommand{\rp}[1]{Proposition \ref{P:#1}}
\newcommand{\re}[1]{\ref{E:#1}}
\newcommand{\rco}[1]{Corollary \ref{C:#1}}
\newcommand{\rt}[1] {Theorem \ref{T:#1}}
\newcommand{\rd}[1]{Definition \ref{D:#1}}
\newcommand{\sm}{\smallsetminus}
\newcommand{\Map}{\operatorname{Map}}
\newcommand{\id}{\operatorname{id}}
\newcommand{\pr}{\operatorname{pr}}
\newcommand{\ev}{\operatorname{ev}}
\newcommand{\im}{\operatorname{Im}}
\newcommand{\Fun}{\operatorname{Fun}}
\newcommand{\Mor}{\operatorname{Mor}}
\newcommand{\map}{\operatorname{map}}
\newcommand{\Ho}{\operatorname{Ho}}
\newcommand{\pt}{\operatorname{pt}}
\newcommand{\Ob}{\operatorname{Ob}}
\newcommand{\p}{\partial}
\newcommand{\Id}{\operatorname{Id}}
\newcommand{\colim}{\operatorname{colim}}
\newcommand{\Hom}{\operatorname{Hom}}
\renewcommand{\S}{\frak{S}}
\renewcommand{\P}{\frak{P}}
\begin{document}
%Topmatter

\title[Yoneda lemma for complete Segal spaces]%
{Yoneda lemma for complete Segal spaces}

\author{David Kazhdan}
\author{Yakov Varshavsky}
\address{Institute of Mathematics\\
The Hebrew University of Jerusalem\\
Givat-Ram, Jerusalem,  91904\\
Israel} 
\email{kazhdan@math.huji.ac.il, vyakov@math.huji.ac.il}

%\author{Yakov Varshavsky}
%\address{Institute of Mathematics\\
%The Hebrew University of Jerusalem\\
%Givat-Ram, Jerusalem,  91904\\
%Israel} \email{vyakov@math.huji.ac.il}

%\author[]{}
%\address{}
%\email{}
%\date{\today}

\thanks{D.K. was partially supported by the ERC grant No. 247049-GLC,
Y.V. was partially supported by the ISF grant No. 1017/13}
%\abstract
\begin{abstract} In this note we formulate and give a self-contained proof of
the Yoneda lemma for $\infty$-categories in the language of
complete Segal spaces.
\end{abstract}
\maketitle \centerline{\em To the memory of I.M.Gelfand}
\tableofcontents
\section*{Introduction}
In recent years $\infty$-categories or, more formally,
$(\infty,1)$-categories appear in various areas of mathematics.
For example, they became a necessary ingredient in the geometric
Langlands problem.  In his books \cite{Lu,Lu2} Lurie developed a
theory of $\infty$-categories in the language of quasi-categories
and extended many results of the ordinary category theory to this
setting.

In his work \cite{Re} Rezk introduced another model of
$\infty$-categories, which he called complete Segal spaces. This
model has certain advantages. For example, it has a 
generalization to $(\infty,n)$-categories (see \cite{Re2}).

It is natural to extend results of the ordinary category
theory to the setting of complete Segal spaces. In this note we do
this for the Yoneda lemma.

To formulate it, we need to construct a convenient model of the
"$\infty$-category of spaces", which an $\infty$-analog of the
category of sets. Motivated by Lurie's results,  
we define this $\infty$-category to be the simplicial space "classifying left
fibrations". After this is done, the construction of the Yoneda embedding and 
the proof of the Yoneda lemma goes almost like in the case of ordinary categories. 

In our next works \cite{KV1,KV2} we study adjoint functors, limits and
colimits, show a stronger version of the Yoneda lemma, and generalize results of this paper 
to the setting of $(\infty,n)$-categories.

We thank Emmanuel Farjoun, Vladimir Hinich and Nick Rozenblyum for
stimulating conversations and valuable remarks.

This paper is organized as follows. To make the work self-contained,
in the first section we introduce basic definitions and discuss
properties of model categories, simplicial sets, simplicial spaces
and Segal spaces, assuming only basic category theory. In the
second section we introduce left fibrations, construct the
$\infty$-category of spaces $\S$, and formulate and prove the
Yoneda lemma. Next, in the third section, we study quasifibrations
of simplicial spaces, which are needed for our argument and are also
very interesting objects for their own. Finally, the last section
is devoted to the proof of the properties of $\S$, formulated in
the second section. 

\section{Preliminaries}

\subsection{Model categories}

\begin{Emp} \label{E:rlp}
{\bf Notation.} Let $\C{C}$ be a category. (a) For an element $Z\in\C{C}$, we denote by 
$\C{C}/Z$ the overcategory over $Z$. 

(b) For a pair of
morphisms $i:A\to B$ and $p:X\to Y$ in $\C{C}$, we denote by
$\Hom_{\C{C}}(i,p)$ the set of commutative diagrams in ${\C{C}}$
\begin{equation} \label{Eq:cd}
\begin{CD}
A @>a>> X\\
@ViVV @VpVV\\
B @>b>> Y.
\end{CD}
\end{equation}
We say that $i$ is a {\em retract} of $p$, if there exist
$\al\in\Hom_{\C{C}}(i,p)$ and $\beta\in\Hom_{\C{C}}(p,i)$ such
that $\beta\circ\al=\Id_i$.

(c) We say that $p$ has the {\em right lifting property (RLP)}
with respect  to $i$ (and that $i$ has the {\em left lifting
property (LLP)} with respect  to $p$), if for every commutative
diagram \form{cd} there exists a morphism $c:B\to X$ such that
$p\circ c=b$ and $c\circ i=a$.

Equivalently, this happens if and only if the natural map of sets
\[
(i^*,p_*):\Hom(B,X)\to \Hom(A,X)\times_{\Hom(A,Y)}\Hom(B,Y)
\]
is surjective.

(d) Assume that $\C{C}$ has fiber products. Then for every
morphism $f:X\to Y$ in $\C{C}/Z$ and morphism
$g:Z'\to Z$ in $\C{C}$, we write $g^*(f):g^*(X)\to g^*(Y)$ instead of
$f\times_Z Z':X\times_Z Z'\to Y\times_Z Z'$ and call it {\em the
pullback of $f$}.

(e) We say that a category ${\C{C}}$ is {\em Cartesian}, if
${\C{C}}$ has finite products, and for every $X,Y\in {\C{C}}$
there exists an element $X^Y\in {\C{C}}$, representing a
functor $Z\mapsto\Hom(Z\times Y,X)$, which is called {\em the internal hom} of $X$ and $Y$. 
\end{Emp}

\begin{Emp} \label{E:expsh}
{\bf Example.} Let $\C{C}$ be the category of functors
$\C{C}=\Fun(\Gm^{op},Set)$, where $\Gm$ is a small category, and
$Set$ is the category of sets.

(a) Since category $Set$ has all limits and colimits,  category
$\C{C}$ also has these properties. Explicitly, every functor
$\al:I\to\C{C}$ defines a functor $\al(\gm):I\to Set$ for each
$\gm\in\Gm$, and we have $\lim_I(\al)(\gm)=\lim_I(\al(\gm))$ and
similarly for colimits. In particular, category $\C{C}$ has
products.

(b) For every $\gm\in\Gm$, denote by $F_{\gm}\in\C{C}$ the
representable functor $\Hom_{\Gm}(\cdot,\gm)$. Then for every
$X,Y\in\C{C}$ there exists their internal hom $X^Y\in\C{C}$, defined
by the rule $X^Y(\gm)=\Hom(Y\times F_{\gm},X)$ with obvious
transition maps. In other words, category $\C{C}$ is Cartesian.
\end{Emp}

The following lemma is straightforward.

\begin{Lem} \label{L:RLP}
Let ${\C{C}}$ be a Cartesian category, and let $i:A\to B$,
$j:A'\to B'$ and $p:X\to Y$ be morphisms in ${\C{C}}$. Then $j$
has the LLP with respect to $(i^*,p_*):X^B\to X^A\times_{Y^A}Y^B$
if and only if $(i_*,j_*):(A\times B')\sqcup_{(A\times
A')}(B\times A')\to B\times B'$ has the LLP with respect to $p$.
\end{Lem}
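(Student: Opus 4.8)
The plan is to unwind both lifting properties into statements about surjectivity of maps of sets, using the adjunction between products and internal homs, and then to check that the two resulting conditions are literally the same. First I would record, from \form{cd} and part (c) of \ref{E:rlp}, that a morphism $u:C\to D$ has the LLP with respect to $p:X\to Y$ exactly when for every pair $(c,d)$ with $c:C\to X$, $d:D\to Y$ and $p\circ c=d\circ u$ there is a diagonal filler $D\to X$; equivalently, the map $\Hom(D,X)\to\Hom(C,X)\times_{\Hom(C,Y)}\Hom(D,Y)$ is surjective. So I must show that this surjectivity statement for $u=j:A'\to B'$ and the target $(i^*,p_*):X^B\to X^A\times_{Y^A}Y^B$ coincides with the same surjectivity statement for $u=(i_*,j_*):(A\times B')\sqcup_{A\times A'}(B\times A')\to B\times B'$ and the target $p:X\to Y$.

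The key computation is the exponential adjunction $\Hom(Z,X^B)\cong\Hom(Z\times B,X)$ valid in any Cartesian category (see \ref{E:expsh}), together with its compatibility with the fiber product defining $X^A\times_{Y^A}Y^B$. Applying $\Hom(A',-)$ and $\Hom(B',-)$ to the square defining $(i^*,p_*)$ and using this adjunction, one rewrites $\Hom(B',X^B)$ as $\Hom(B'\times B,X)$, $\Hom(A',X^B)$ as $\Hom(A'\times B,X)$, $\Hom(B',X^A\times_{Y^A}Y^B)$ as $\Hom(B'\times A,X)\times_{\Hom(A'\times A,Y)}\Hom(B'\times B,Y)$ — carefully tracking which maps $i$, $j$, $p$ induce each leg — and similarly for the $A'$-row. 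The assertion that $j$ has the LLP against $(i^*,p_*)$ then becomes: every compatible system of maps out of $A'\times B$, $B'\times A$ and the relevant $Y$-targets extends to $B'\times B\to X$. Next I would observe that the domain of $(i_*,j_*)$, namely the pushout $(A\times B')\sqcup_{A\times A'}(B\times A')$, corepresents exactly such compatible systems: $\Hom\big((A\times B')\sqcup_{A\times A'}(B\times A'),X\big)\cong\Hom(A\times B',X)\times_{\Hom(A\times A',X)}\Hom(B\times A',X)$ by the universal property of the pushout, and the map to $\Hom(B\times B',X)$ induced by $(i_*,j_*)$ matches the restriction maps above. Transporting everything through the symmetry $X\times Y\cong Y\times X$, the surjectivity of $\Hom(B',X^B)\to\Hom(A',X^B)\times_{\cdots}\Hom(B',X^A\times_{Y^A}Y^B)$ is term-by-term identical to the surjectivity of $\Hom(B\times B',X)\to\Hom\big((A\times B')\sqcup_{A\times A'}(B\times A'),X\big)\times_{\Hom(\ldots,Y)}\Hom(B\times B',Y)$, which is precisely the LLP of $(i_*,j_*)$ against $p$.

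The main obstacle is purely bookkeeping: making sure that the three pullback diagrams in play — the one defining the internal-hom target $X^A\times_{Y^A}Y^B$, the one defining the pushout domain $(A\times B')\sqcup_{A\times A'}(B\times A')$, and the fiber-product target in the surjectivity reformulation of each LLP — are matched up with the correct maps, and that the adjunction isomorphisms are natural in all variables simultaneously (so that they intertwine $i^*$, $j^*$, $p_*$ correctly, not just the objects). Since the lemma is asserted to be ``straightforward,'' I would not belabor the diagram chase; the content is entirely the adjunction $\Hom(Z\times B,X)\cong\Hom(Z,X^B)$ and the dual universal properties of pushout and fiber product, and once the dictionary is set up the equivalence of the two LLP conditions is immediate.
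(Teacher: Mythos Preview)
Your proposal is correct and is exactly the straightforward adjunction argument the paper has in mind (the paper gives no proof, merely calling the lemma ``straightforward''). One small slip: in your expansion of $\Hom(B',X^A\times_{Y^A}Y^B)$ the middle term of the fiber product should be $\Hom(B'\times A,Y)$, not $\Hom(A'\times A,Y)$; once that is fixed the bookkeeping goes through as you describe.
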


\begin{Def} \label{D:model}
(compare \cite[II,1]{GJ}). A {\em model category} is a category
${\C{C}}$, equipped with three collections of morphisms, called
cofibrations, fibrations and weak equivalences, which satisfy the
following axioms:

CM1: The category ${\C{C}}$ has all finite limits and colimits.

CM2 (2-out-of-3): In a diagram
$X\overset{f}{\to}Y\overset{g}{\to}Z$ if any two of the morphisms
$f,g$ and $g\circ f$ are weak equivalences, then so is the third.

CM3 (retract): If $f$ is a retract of $g$, and $g$ is a weak
equivalence/fibration/cofibration then so is $f$.

CM4 (lifting property): Let $i$ be a cofibration and $p$ be a
fibration. Then $p$ has the RLP with respect to $i$, if either $i$
or $p$ is a weak equivalence.

CM5 (decomposition property): any morphism $f$ has a decomposition

(a) $f=p\circ i$, where $p$ is fibration, and $i$ is a cofibration
and weak equivalence;

(b) $f=q\circ j$, where $q$ is fibration and weak equivalence, and
$j$ is a cofibration.
\end{Def}

\begin{Emp} \label{E:trivfib}
{\bf Notation.} (a) A map in a model category ${\C{C}}$ is called
a {\em trivial cofibration} (resp. {\em trivial fibration}) if it
is cofibration (resp. fibration) and a weak equivalence.

(b) By CM5, every morphism $f:X\to Y$ can be written as a
composition $X\overset{i}{\to}X'\overset{p}{\to}Y$, where $i$ is a
trivial cofibration, and $p$ a fibration. In such a case, we 
say that $p$ is a {\em fibrant replacement} of $f$.

(c) An element $X\in {\C{C}}$ is called {\em fibrant} (resp. {\em
cofibrant}), if the canonical map $X\to\pt$ (resp. $\emptyset\to
X$), where $\pt$ (resp $\emptyset$) is the final (resp. initial)
object of ${\C{C}}$, is a fibration (resp. cofibration).

%In particular, we say also that $X'$ is a {\em fibrant replacement
%of $X$}, if $X'$ is fibrant and there exists a trivial cofibration
%$X\to X'$.

%(d) An element $X\in {\C{C}}$ is called {\em cofibrant}, if the
%canonical map $\emptyset\to X$, where $\emptyset$ is the initial
%object of ${\C{C}}$, is a cofibration.
\end{Emp}

For the following basic fact see, for example, \cite[II, Lem.
1.1]{GJ}.

\begin{Lem} \label{L:modcat}
A map $f:X\to Y$ in a model category ${\C{C}}$ is a cofibration
(resp. trivial cofibration) if and only if it has the LLP with
respect to all trivial fibrations (resp. fibrations).

(b) A map $f:X\to Y$ in a model category ${\C{C}}$ is a fibration
(resp. trivial fibration) if and only if it has the RLP with
respect to all trivial cofibrations (resp. cofibrations).
\end{Lem}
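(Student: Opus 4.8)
The plan is to prove the first (unnumbered) assertion in detail; part (b) will then follow by the evident dual argument --- reverse all arrows, interchange the words ``cofibration''/``fibration'' and ``LLP''/``RLP'', and swap the two halves of CM5.

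For the ``only if'' direction, suppose first that $f$ is a cofibration and let $p$ be any trivial fibration. Then CM4, applied with $i=f$ (a cofibration) and $p$ (a fibration which is a weak equivalence), says that $p$ has the RLP with respect to $f$, i.e.\ $f$ has the LLP with respect to $p$. Hence $f$ has the LLP with respect to all trivial fibrations. If instead $f$ is a trivial cofibration, then $f$ is in particular a weak equivalence, so CM4 applies with $i=f$ and \emph{any} fibration $p$, and the same reasoning gives that $f$ has the LLP with respect to all fibrations.

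For the ``if'' direction I would run the standard retract argument. Assume $f:X\to Y$ has the LLP with respect to all trivial fibrations. Using CM5(b), write $f=q\circ j$ with $j:X\to X'$ a cofibration and $q:X'\to Y$ a trivial fibration. Since $f$ has the LLP with respect to $q$, the commutative square with top edge $j$, left edge $f$, right edge $q$ and bottom edge $\Id_Y$ admits a lift $c:Y\to X'$ with $c\circ f=j$ and $q\circ c=\Id_Y$. Then $\al=(\Id_X,c)\in\Hom_{\C{C}}(f,j)$ and $\beta=(\Id_X,q)\in\Hom_{\C{C}}(j,f)$ satisfy $\beta\circ\al=\Id_f$, so $f$ is a retract of $j$ in the sense of \re{rlp}(b), and CM3 yields that $f$ is a cofibration. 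If instead $f$ has the LLP with respect to all fibrations, one argues identically, but uses CM5(a) to write $f=p\circ i$ with $i$ a trivial cofibration and $p$ a fibration; the analogous lift exhibits $f$ as a retract of $i$, and CM3 shows that $f$ is simultaneously a cofibration and a weak equivalence, hence a trivial cofibration. Running the whole argument with arrows reversed then gives (b).

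No step here is genuinely hard. The one point to get right is to invoke the correct half of CM5: one must choose the factorization whose ``formal'' factor ($j$, resp.\ $i$) lies in the class in which we want to place $f$, while the other factor ($q$, resp.\ $p$) is precisely the type of map against which $f$ was assumed to have the lifting property --- this is what makes the lift exist and the retract square close up.
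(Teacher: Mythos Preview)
Your argument is correct and is exactly the standard retract argument; the paper itself does not give a proof but merely refers to \cite[II, Lem.~1.1]{GJ}, where essentially the same argument appears. So your proposal is a correct expansion of the citation.
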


\begin{Emp} \label{E:remmodcat}
{\bf Remarks.} (a) \rl{modcat} implies in particular that
(trivial) cofibrations and (trivial) fibrations are closed under
compositions, and that all isomorphisms are trivial cofibrations and trivial 
fibrations.

(b) It also follows immediately from \rl{modcat} that (trivial)
fibrations are preserved by all pullbacks, and that (trivial)
cofibrations are preserved by all pushouts.

(c) It follows from CM5 (a) and CM2 that every weak equivalence
$f$ has a decomposition $f=p\circ i$, where $p$ is trivial
fibration, and $i$ is a trivial cofibration.
%(c) \rl{modcat} implies that in a model category ${\C{C}}$ each
%two classes among cofibrations, fibrations and weak equivalences
%determine the third one. For example, weak equivalences can be
%characterised as compositions $p\circ i$, where $p$ has the RLP
%with respect to all cofibrations, and $i$ has the LLP with respect
%to all fibrations.
\end{Emp}

%\begin{Emp} \label{E:example}
%{\bf Example.} (CHECK WHETHER NEEDED!!!) Assume that $f:X\to Y$ is a trivial fibration such
%that $Y$ is cofibrant. Then by the RLP of $f$ with respect to
%$\emptyset\to Y$ implies that there exist a section $s:Y\to X$ of
%$f$.
%\end{Emp}

\begin{Def} \label{D:Cart}
We call a model category ${\C{C}}$ {\em Cartesian}, if ${\C{C}}$
is a Cartesian category, the final object of ${\C{C}}$ is
cofibrant, and for every cofibration $i:A\to B$ and fibration
$p:X\to Y$, the induced map $q:X^B\to X^A\times_{Y^A} Y^B$ is a
fibration and, additionally, $q$ is a weak equivalence if either
$i$ or $p$ is.
\end{Def}

\begin{Emp} \label{E:Cart}
{\bf Remark.} Taking $A=\emptyset$ or $Y=\pt$ in the definition of
Cartesian model category, we get the following particular cases.

(a) If $B$ is cofibrant, then for every (trivial) fibration $X\to
Y$, the induced map $X^B\to Y^B$ is a (trivial) fibration.

(b) If $X$ is fibrant, then for every (trivial) cofibration $A\to
B$, the induced map $X^B\to X^A$ is a (trivial) fibration.
\end{Emp}

\begin{Lem} \label{L:Cart}
Let ${\C{C}}$ be a model category, which is Cartesian as a
category, and such that the final object of ${\C{C}}$ is cofibrant.
Then ${\C{C}}$ is a Cartesian model category if and only if for
every two cofibrations $i:A\to B$ and $i':A'\to B'$, the
induced morphism $j:(A\times B')\sqcup_{(A\times A')}(B\times 
A')\to B\times B'$ is a cofibration and, additionally, $j$ is a
weak equivalence, if either $i$ or $i'$ is.
\end{Lem}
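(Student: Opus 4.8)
The statement is a standard "Leibniz/pushout-product" characterization of Cartesian model categories, and the natural route is to deduce it from \rl{RLP} together with the lifting-property descriptions of (trivial) cofibrations and (trivial) fibrations in \rl{modcat}. The plan is to translate each clause of Definition \rd{Cart} into a lifting property, apply the adjunction of \rl{RLP}, and read off the equivalent statement about the pushout-product map $j$.

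First I would fix cofibrations $i:A\to B$ and $i':A'\to B'$, write $j:(A\times B')\sqcup_{(A\times A')}(B\times A')\to B\times B'$ for the induced map, and let $p:X\to Y$ be an arbitrary morphism. By \rl{RLP} (applied with the roles arranged so that $i'$ plays the part of "$j$" there), the map $i'$ has the LLP with respect to $q:=(i^*,p_*):X^B\to X^A\times_{Y^A}Y^B$ if and only if $j$ has the LLP with respect to $p$. Now I run this equivalence twice, once with $p$ ranging over all trivial fibrations and once over all fibrations. By \rl{modcat}(a), $j$ is a cofibration iff it has the LLP against all trivial fibrations, and $j$ is a trivial cofibration iff it has the LLP against all fibrations; dually, by \rl{modcat}(b), $q$ is a fibration iff it has the RLP against all trivial cofibrations and a trivial fibration iff it has the RLP against all cofibrations.

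The argument then assembles as follows. Suppose $\C{C}$ is a Cartesian model category. Given cofibrations $i,i'$: for any trivial fibration $p$, the map $q=(i^*,p_*)$ is a (trivial, in fact) fibration by Definition \rd{Cart}, hence $i'$ (being a cofibration) has the LLP against it by \rl{modcat}(a), hence by \rl{RLP} $j$ has the LLP against $p$; as $p$ was an arbitrary trivial fibration, $j$ is a cofibration. If moreover $i$ or $i'$ is a weak equivalence, I want $j$ to be trivial; here I test against an arbitrary fibration $p$ and use that $q$ is then a \emph{trivial} fibration (this is the place where the "additionally" clause of \rd{Cart} is used, together with \rl{modcat}(b) to know that a trivial fibration has the RLP against all cofibrations, so $i'$ lifts against $q$ when $i'$ is a cofibration; when instead it is $i$ that is a weak equivalence one uses that $q$ is a trivial fibration because $i$ is, again lifting $i'$ against it), and then \rl{RLP} gives the LLP of $j$ against $p$, so $j$ is a trivial cofibration. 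Conversely, assuming the pushout-product condition on $j$, I reverse each step: given a cofibration $i$ and a fibration $p$, to see $q=(i^*,p_*)$ is a fibration I must show it has the RLP against every trivial cofibration $i'$; but every trivial cofibration is in particular a cofibration, so the hypothesis makes $j$ a cofibration, and — testing against the trivial fibration... no: here one must be a little careful and instead test $j$ against $p$ directly. Rather, I check $q$ has the RLP against an arbitrary trivial cofibration $i'$: such $i'$ is a cofibration, so by hypothesis $j$ is a cofibration, and since $i'$ is also a weak equivalence the "additionally" part is not yet available — so instead note $j$ has the LLP against the \emph{fibration} $p$ because $j$ is a cofibration and one of $i,i'$ (namely $i'$) is a weak equivalence makes $j$ a \emph{trivial} cofibration, hence it lifts against all fibrations by \rl{modcat}(a); then \rl{RLP} converts this into the LLP of $i'$ against $q$, i.e. the RLP of $q$ against $i'$, for all trivial cofibrations $i'$, so $q$ is a fibration. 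The weak-equivalence refinements of \rd{Cart} are obtained the same way, testing $q$ against all cofibrations $i'$ and using that $j$ is then a trivial cofibration (because the \emph{other} map among $i,i'$ is the weak equivalence).

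The only real subtlety — and the step I expect to require the most care — is bookkeeping the two symmetric roles of "one of $i$, $i'$ is a weak equivalence" and matching them correctly with whether one applies \rl{modcat}(a) or (b): in \rl{RLP} the map $i'$ is singled out, so when the \emph{given} weak equivalence is $i$ one must feed the hypothesis on $j$ through a fibration test, whereas when it is $i'$ one feeds it through a trivial-fibration test, and conversely on the other direction of the iff. Everything else is a formal unwinding of \rl{RLP} and \rl{modcat}, with no extra input needed.
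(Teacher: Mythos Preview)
Your approach is exactly the paper's: the proof there reads in its entirety ``This follows from a combination of \rl{modcat} and \rl{RLP}'', and you are unwinding precisely that combination. The plan is sound.

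Two pieces of bookkeeping need tightening, though. First, in the forward direction, when it is $i'$ (not $i$) that is a weak equivalence, you write that $q=(i^*,p_*)$ is a \emph{trivial} fibration; but $q$ depends only on $i$ and $p$, so from \rd{Cart} you only know $q$ is a fibration. The lift of $i'$ against $q$ comes instead from $i'$ being a \emph{trivial} cofibration, which has the LLP against all fibrations. Second, in the converse direction, your final sentence only covers the refinement ``$i$ is a weak equivalence $\Rightarrow$ $q$ is trivial''. You still owe the case ``$p$ is a weak equivalence $\Rightarrow$ $q$ is trivial'': here $p$ is a trivial fibration, $j$ is merely a cofibration (neither $i$ nor $i'$ need be a weak equivalence), and $j$ lifts against $p$ by \rl{modcat}(a), whence $q$ has the RLP against every cofibration $i'$ and is a trivial fibration. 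Once these two points are straightened out, your argument is complete.
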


\begin{proof}
This follows from a combination of \rl{modcat} and \rl{RLP}.
\end{proof}

\begin{Def} \label{D:proper}
A model category ${\C{C}}$ is called:

(a) {\em right proper}, if weak equivalences are preserved by
pullbacks along fibrations;

(b) {\em left proper}, if weak equivalence are preserved by
pushouts along cofibrations;

(c) {\em proper}, if it is both left and right proper.
\end{Def}

%The following easy but very important lemma is proven in \cite[II,
%Cor 8.13]{GJ}.

%\begin{Lem} \label{L:hfibprod}
%Suppose we are given a commutative diagram
%
%\[
%\begin{CD}
%A @>>>B @<f<< C\\
%@VaVV @VbVV @VcVV\\
%D @>>>E @<g<< F,
%\end{CD}
%\]
%
%in a right proper category $\C{C}$ in which $f$ and $g$ are
%fibrations and all vertical maps are weak equivalences. Then the
%induced map $A\times_{B} C\to D\times_E F$ is a weak equivalence.
%\end{Lem}

%\section{Complete Segal spaces}

\subsection{Simplicial sets}
\begin{Emp} \label{E:indcat}
{\bf Category $\Dt$.} (a) For $n\geq 0$, we denote by $[n]$ the
category, corresponding to a partially ordered set
$\{0<1<\ldots<n\}$. Let $\Dt$ be the full subcategory of the
category of small categories $Cat$, consisting of objects $[n],
n\geq 0$.

(b) For each $(m+1)$-tuple of integers $0\leq k_0\leq k_1\leq\ldots \leq k_m\leq n$, we denote by
$\dt^{k_0,\ldots,k_m}$ the map $\dt:[m]\to[n]$ such that
$\dt(i)=k_i$ for all $i$.

(c) For $0\leq i\leq n$ we define an inclusion $d^i:[n-1]\hra
[n]$ such that $i\notin\im d^i$; for $0\leq i<j\leq n$, we define an 
inclusion $d^{i,j}:[n-2]\hra [n]$ such that $i,j\notin\im d^{i,j}$;
for $0\leq i\leq n-m$ we define an inclusion $e^i:[m]\to[n]$
defined by $e^i(k):=k+i$.
\end{Emp}

\begin{Emp} \label{E:sp}
{\bf Spaces.} (a) By the {\em category of spaces} or, what is the same,
the {\em category simplicial sets} we mean the category of functors
$Sp:=\Fun(\Dt^{op},Set)$.

(b) For $X\in Sp$, we set $X_n:=X([n])$. For every
$\tau:[n]\to[m]$ in $\Dt$, we denote by  $\tau^*:X_m\to X_n$ the
induced map of sets. For every morphism $f:X\to Y$ in $Sp$ we
denote by $f_n$ the corresponding map $X_n\to Y_n$.%We also set $d_i=(d^i)^*:X_n\to X_{n-1}$,
%$s_j=(s^j)^*:X_n\to X_{n+1}$ and $e_i=(e^i)^*:X_n\to X_m$.

(c) By \re{expsh}, category $Sp$ is Cartesian and has all limits
and colimits.
\end{Emp}

\begin{Emp} \label{E:stsym}
{\bf The standard $n$-simplex.} (a) For every $n\geq 0$, we denote
by $\Dt[n]\in Sp$ the functor $\Hom_{\Dt}(\cdot,[n]):\Dt^{op}\to
Set$. Then $\pt:=\Dt[0]$ is a
final object of $Sp$.

(b) The Yoneda lemma  defines identifications $\Hom_{Sp}(\Dt[n],X)=X_n$ and \\
$\Hom_{Sp}(\Dt[n],\Dt[m])=\Hom_{\Dt}([n],[m])$ for all $X\in Sp$ and all $n,m\geq 0$.

(c) We denote by $\Dt^i[n]$ the image of the inclusion
$d^i:\Dt[n-1]\to\Dt[n]$, and set $\p\Dt[n]:=\cup_{i=0}^n\Dt^i[n]\subset\Dt[n]$ 
and $\La^k[n]:=\cup_{i\neq k}\Dt^k[n]\subset\Dt[n]$ for all
$k=0,\ldots,n$.
\end{Emp}

\begin{Emp} \label{E:fibers}
{\bf Fibers.} (a) For  $X\in Sp$, we say $x\in X$ instead of $x\in
X_0$. By \re{stsym} (b), each $x\in X$ corresponds to a map
$x:\pt\to X$.

(b) For every morphism $f:Y\to X$, we denote by $f^{-1}(x)$ or
$Y_x$ the fiber product $\{x\}\times_X Y:=\pt\times_{x,X}Y$ and
call it {\em the fiber of $f$ at $x$}.

(c) For every $Z\in Sp$ and $X,Y\in Sp/Z$, we denote by
$\Map_Z(X,Y)$ the fiber of $Y^X\to Z^X$ over the
projection $(X\to Z)\in Z^X$. %and call it {\em space of maps from $X$ to $Y$ over $Z$}.

%We also set $\Map(X,Y):=Y^X$ and call it the {\em space of maps
%from $X$ to $Y$}.
\end{Emp}

\begin{Def} \label{D:kanms}
(a) A map $f:X\to Y$ in $Sp$ is called a {\em (Kan) fibration}, if
it has the RLP with respect to inclusions $\La^k[n]\hra\Dt[n]$ for
all $n>0,k=0,\ldots,n$.

(b) A map $f:X\to Y$ in $Sp$ is called a {\em weak equivalence},
if it induces a weak equivalence $|f|:|X|\to |Y|$ between
geometric realisations (see \cite[p. 60]{GJ}).

(c)  A map $f:X\to Y$ in $Sp$ is called a {\em cofibration}, if $f_n:X_n\to Y_n$ is an inclusion 
for all $n$. 
\end{Def}

\begin{Thm} \label{T:Kan}
Category $Sp$ has a structure of a proper Cartesian model category
such that cofibrations, fibrations
and weak equivalences are defined in \rd{kanms}. In particular,
all $X\in Sp$ are cofibrant, and trivial fibrations are precisely the maps
which have the RLP with respect to inclusions
$\p\Dt[n]\hra\Dt[n],n\geq 0$.
\end{Thm}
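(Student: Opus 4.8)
The plan is to put on $Sp$ the classical Kan--Quillen model structure by means of the small object argument applied to two explicit sets of maps: the set $I=\{\p\Dt[n]\hra\Dt[n]\mid n\geq 0\}$ of boundary inclusions and the set $J=\{\La^k[n]\hra\Dt[n]\mid n>0,\ 0\leq k\leq n\}$ of horn inclusions. The first step is combinatorial: using the skeletal filtration of a simplicial set and the fact that a nondegenerate $n$-simplex is glued in along its boundary, one shows that a map is a cofibration (a levelwise injection) if and only if it lies in the saturated class generated by $I$, i.e. the closure of $I$ under pushout, transfinite composition and retract; call a map \emph{anodyne} if it lies in the saturated class generated by $J$. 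Since all (co)domains of maps in $I$ and $J$ are finite, hence small, the small object argument gives, for any $f$, functorial factorizations $f=p\circ i$ with $i$ a transfinite composite of pushouts of maps in $I$ and $p$ having the RLP with respect to $I$, and $f=q\circ j$ with $j$ anodyne and $q$ having the RLP with respect to $J$, i.e. $q$ a Kan fibration. The standard retract argument then identifies the maps with the RLP with respect to $I$ with the maps having the RLP with respect to all cofibrations, and identifies the anodyne maps with the maps having the LLP with respect to all Kan fibrations. Since horn inclusions are monomorphisms, every map in $J$ belongs to the saturated class generated by $I$, so $I$-injective maps are in particular Kan fibrations and anodyne maps are in particular cofibrations. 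Axiom CM1 is \re{sp}.

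The core of the proof --- and the step I expect to be the main obstacle --- is to tie these lifting classes to the weak equivalences, which are defined through geometric realization. One must establish: (i) a Kan fibration is a weak equivalence if and only if it has the RLP with respect to $I$; and (ii) every anodyne map is a weak equivalence, and conversely a cofibration which is a weak equivalence is anodyne. The geometric inputs are Quillen's theorem that $|\cdot|$ carries Kan fibrations to Serre fibrations, the fact that $|\cdot|$ commutes with finite limits, and the fact that $|\La^k[n]\hra\Dt[n]|$ is a trivial cofibration and $|\p\Dt[n]\hra\Dt[n]|$ a cofibration of topological spaces; together with the long exact homotopy sequence and a comparison of the combinatorial and the topological RLP-against-cells conditions, these yield (i) and (ii). A convenient organizing device is the Gabriel--Zisman lemma that the pushout-product of an anodyne map with a cofibration is anodyne; specializing the cofibration to an endpoint inclusion $\Dt[0]\hra\Dt[1]$ exhibits anodyne maps as strong-deformation-retract inclusions, which is what drives the proof that they are weak equivalences. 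Granting (i) and (ii), the axioms fall out: CM4 holds because cofibrations have the LLP against the $I$-injective maps, which by (i) are exactly the trivial fibrations, while trivial cofibrations are anodyne, hence have the LLP against all Kan fibrations; CM5(a),(b) follow from the two small-object factorizations together with these identifications. Axiom CM2 and the weak-equivalence clause of CM3 are inherited from topological spaces along $|\cdot|$, and the fibration/cofibration clauses of CM3 are formal since classes defined by a lifting property are closed under retracts.

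For the remaining assertions: every $X\in Sp$ is cofibrant since $\emp\to X$ is a levelwise injection, and the final object $\pt=\Dt[0]$ is cofibrant, so \rl{Cart} reduces Cartesianness to showing that for cofibrations $i:A\to B$ and $i':A'\to B'$ the induced map $j:(A\times B')\sqcup_{(A\times A')}(B\times A')\to B\times B'$ is a cofibration, and is a weak equivalence when $i$ or $i'$ is. That $j$ is a levelwise injection is elementary, and if, say, $i$ is a trivial cofibration then it is anodyne by (ii), whence $j$ is anodyne by the Gabriel--Zisman pushout-product lemma, hence a trivial cofibration. The final clause of the theorem, that the trivial fibrations are precisely the maps with the RLP with respect to $I$, is (i) combined with the identification of cofibrations with the saturated class of $I$. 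Lastly, $Sp$ is left proper because all its objects are cofibrant, so the gluing lemma (a consequence of the axioms just established) makes pushouts of weak equivalences along cofibrations into weak equivalences; and $Sp$ is right proper because $|\cdot|$ commutes with finite limits and sends Kan fibrations to Serre fibrations, so right properness of $Sp$ follows from that of topological spaces.
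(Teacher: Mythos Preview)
Your outline is correct and is essentially the classical construction of the Kan--Quillen model structure together with the standard arguments for Cartesianness (via the Gabriel--Zisman pushout-product lemma) and properness. The paper, however, does not give a proof at all: it simply cites \cite[I, Thm~11.3, Prop~11.5 and II, Cor~8.6]{GJ} and remarks that for $Sp$ the notion ``Cartesian'' coincides with ``simplicial''. So your proposal is not so much a different route as a faithful unpacking of what the cited reference actually does; you are supplying considerably more than the paper asks for. If the intent of the paper is to treat \rt{Kan} as a black box from the literature, a one-line citation suffices; if you want a self-contained proof, your sketch is sound, with the only caveat that the points you correctly flag as the ``main obstacle'' --- Quillen's theorem that $|\cdot|$ sends Kan fibrations to Serre fibrations, and the identification of trivial fibrations with $I$-injectives --- are genuinely nontrivial and would need either a full argument or their own citation.
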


\begin{proof}
See \cite[I, Thm 11.3, Prop 11.5 and II, Cor 8.6]{GJ} and note
that in the case of model category $Sp$, "Cartesian"  means the
same as "simplicial".
\end{proof}

\begin{Def} \label{D:kancoml}
We say that $X\in Sp$ is a {\em (contractible) Kan complex}, if
the projection $X\to\pt$ is a (trivial) fibration.
\end{Def}

\begin{Emp} \label{E:pi0}
{\bf Connected components}. (a) We say that $X\in Sp$ is {\em
connected}, if it can not be written as $X=X'\sqcup X''$, where
$X',X''\neq\emptyset$. We say that $Y\subset X$ is a {\em
connected component of $X$}, if it is a maximal connected subspace
of $X$. Notice that $X$ is a disjoint union of its connected
components.

(b) We denote the set of connected components of $X$ by $\pi_0(X)$. Then every
map $f:X\to Y$ in $Sp$ induces a map
$\pi_0(f):\pi_0(X)\to\pi_0(Y)$. Note
that $X$ is connected if and only if its geometric realization
$|X|$ is connected. In particular, we have an equality
$\pi_0(X)=\pi_0(|X|)$. Therefore for every weak equivalence
$f:X\to Y$ in $Sp$, the map $\pi_0(f)$ is a bijection.

(c) For $x,y\in X$, we say that $x\sim y$, if $x$ and $y$ belong to the
same connected component of $X$. If $X$ is a Kan complex,
then $x\sim y$ if and only if there exists a map $\al:\Dt[1]\to X$
such that $\al(0)=x$ and $\al(1)=y$ (see \cite[Lem 6.1]{GJ}).
\end{Emp}

\begin{Lem} \label{L:propfib}
(a) Let $f:X\to Y$ be a trivial fibration. Then the space of
sections $\Map_Y(Y,X)$ of $f$ is non-empty and connected.

(b) Let $f:X\to Y$ be a fibration in $Sp$. Then $f$ is trivial if
and only if the Kan complex $f^{-1}(y)$ is contractible for every
$y\in Y$.

(c) Let $f:X\to Y$ is a map of fibrations over $Z$ in $Sp$. Then
$f$ is a weak equivalence if and only if the map of fibers
$f_z:X_z\to Y_z$ is a weak equivalence for every $z\in Z$.
\end{Lem}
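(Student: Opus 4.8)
The plan is to prove the three statements (a), (b), (c) of Lemma~\ref{L:propfib} in that order, deducing each from the previous together with the model-category axioms recalled above.

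\textbf{Part (a).} For a trivial fibration $f:X\to Y$, consider the identity $\Id_Y:Y\to Y$. Since the initial object $\emptyset$ is cofibrant and $\emptyset\to Y$ is a cofibration (Theorem~\ref{T:Kan}: all objects are cofibrant), the lifting axiom CM4 applied to the square with bottom edge $\Id_Y$ produces a section $s:Y\to X$ of $f$; hence $\Map_Y(Y,X)\neq\emptyset$. For connectedness, I want to show any two sections $s_0,s_1$ lie in the same connected component. Here I would invoke Lemma~\ref{L:RLP}/\ref{L:Cart} in the guise of Remark~\ref{E:Cart}: since $Y$ is cofibrant and $f$ is a trivial fibration, $f^X:X^Y\to Y^Y$ is a trivial fibration, and $\Map_Y(Y,X)$ is by definition its fiber over $\Id_Y\in Y^Y$, so it is a contractible Kan complex by part (b) --- but to avoid circularity I should instead argue directly: the map $\p\Dt[1]\hra\Dt[1]$ is a cofibration, so the inclusion $Y\times\p\Dt[1]\hra Y\times\Dt[1]$ is a cofibration, and lifting the square whose top edge is $(s_0,s_1):Y\times\p\Dt[1]\to X$ and whose bottom edge is the composite $Y\times\Dt[1]\to Y\overset{s_0 f \text{-compatible}}{}$... more cleanly: lift $(s_0,s_1)\colon Y\sqcup Y\to X$ against $f$ over the constant homotopy $Y\times\Dt[1]\to Y\xrightarrow{} $ wait --- the bottom map must be $Y\times\Dt[1]\to Y$ composed with nothing canonical. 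The correct move: use that $Y\times\p\Dt[1]\to Y\times\Dt[1]$ is a trivial cofibration is false; rather $\p\Dt[1]\to\Dt[1]$ is a cofibration but not trivial. So I instead use Remark~\ref{E:Cart}(b) applied in $Sp/Y$: $\Map_Y(Y,X)$ is the fiber over $\Id_Y$ of the trivial fibration $X^Y\to Y^Y$ (Remark~\ref{E:Cart}(a)), and a fiber of a trivial fibration over a point is a contractible Kan complex (pullback of trivial fibrations along $\{pt\}\hra Y^Y$; using that points are cofibrant and Remark~\ref{E:remmodcat}(b)), hence non-empty and connected. This does not use (b), only Theorem~\ref{T:Kan} and Remark~\ref{E:Cart}.

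\textbf{Part (b).} One direction: if $f$ is a trivial fibration, then for each $y\in Y$ the fiber $f^{-1}(y)=\pt\times_Y X$ is the pullback of the trivial fibration $f$ along $y:\pt\to Y$, hence a trivial fibration onto $\pt$ by Remark~\ref{E:remmodcat}(b), i.e.\ a contractible Kan complex. Conversely, suppose $f$ is a fibration with all fibers contractible Kan complexes. By Theorem~\ref{T:Kan} it suffices to show $f$ has the RLP against each $\p\Dt[n]\hra\Dt[n]$. Given a square with top edge $a:\p\Dt[n]\to X$ and bottom edge $b:\Dt[n]\to Y$, I would argue by an explicit skeletal/inductive filling: since $f$ is a fibration it has the RLP against all horn inclusions $\La^k[n]\hra\Dt[n]$; the obstruction to extending across the last face is a lifting problem against $\p\Dt[n]\hra\Dt[n]$ with constant bottom map landing in a single fiber $f^{-1}(b(\text{barycenter-vertex}))$, and that fiber being a contractible Kan complex kills the obstruction. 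Concretely: reduce to $Y=\Dt[n]$ by pulling back along $b$; then $Y$ is contractible, $f$ is a fibration over a contractible base with contractible fibers, so by Lemma~\ref{L:propfib}(c) (part (c), proved next --- so I will actually prove (c) before finishing (b), or prove (b) by a direct horn-filling argument) $f$ is a weak equivalence, hence a trivial fibration, hence admits the desired lift. To keep the logical order clean I will prove (c) first when it is needed, or else give the self-contained horn-filling proof of (b); I will present the horn-filling version to keep (b) independent.

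\textbf{Part (c).} Let $f:X\to Y$ be a map over $Z$ with $X\to Z$ and $Y\to Z$ fibrations. If $f$ is a weak equivalence then for $z\in Z$, $X_z\to Y_z$ is the pullback of $f$ along $\{z\}\hra Z$; since $X\to Z$ is a fibration, base change along $\{z\}\hra Z$ is a pullback along a fibration, and the ambient model structure is right proper (Theorem~\ref{T:Kan}), so $X_z\to Y_z$ is a weak equivalence. Conversely, suppose each $f_z$ is a weak equivalence; I want $f$ a weak equivalence. Factor $f$ in $Sp/Z$ as $X\xrightarrow{j}X'\xrightarrow{p}Y$ with $j$ a trivial cofibration and $p$ a fibration (CM5), noting $X'\to Z$ is then a fibration. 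For each $z$, $j_z:X_z\to X'_z$ is a weak equivalence (base change of the trivial cofibration $j$ along the fibration... hmm, need care): actually pull the whole factorization back to $\{z\}$; since $X'\to Z$ and $Y\to Z$ are fibrations, $X'_z\to Y_z$ is a fibration and $X_z\to X'_z$ is still a weak equivalence because trivial cofibrations pull back to weak equivalences along fibrations in a right proper model category (or: $X_z\to X'_z\to Y_z$ with $X_z\to Y_z$ and $X_z\to X'_z$... use 2-out-of-3 after checking $j_z$ is a w.e. via right properness). Then $p_z:X'_z\to Y_z$ is a w.e.\ by 2-out-of-3, and it is a fibration, so it is a trivial fibration, with contractible fibers (part (b)). But the fibers of $p_z$ over points of $Y_z$ are exactly the fibers of $p:X'\to Y$ over the corresponding points, so every fiber of the fibration $p$ is contractible, whence $p$ is a trivial fibration by part (b), in particular a weak equivalence; composing with the weak equivalence $j$ gives that $f$ is a weak equivalence by CM2.

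\textbf{Main obstacle.} The delicate point is the interaction of base change with trivial cofibrations in Part (c): pullback along $\{z\}\hra Z$ is a pullback along the fibration $X'\to Z$ (not along a cofibration), so I must lean on right properness of $Sp$ to conclude $j_z$ remains a weak equivalence, and I must phrase this cleanly so the reader sees exactly which axiom is invoked. A secondary subtlety is keeping the logical order (b)$\Leftarrow$(c)-free: I will give (b) a direct horn-filling / skeletal-induction proof rather than deducing it from (c), so that (c)'s use of (b) is not circular.
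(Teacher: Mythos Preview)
Your overall architecture for (c) --- factor $f=p\circ j$, show each $p_z$ is a trivial fibration, then use (b) to conclude $p$ is a trivial fibration --- is exactly the paper's, and your (a) (after the detour) lands on the paper's argument as well. But there are two genuine gaps.

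\textbf{Part (b) is incomplete.} You reduce to $Y=\Dt[n]$ and then either appeal to (c) (which you correctly flag as circular) or promise an unspecified horn-filling argument. The paper finishes (b) in one line after the reduction, and it is precisely a right-properness argument: the inclusion $y:\Dt[0]\hra\Dt[n]$ is a weak equivalence, and $f:X\to\Dt[n]$ is a fibration, so by right properness the pullback $X_y\to X$ is a weak equivalence; then 2-out-of-3 gives that $f:X\to\Dt[n]$ is a weak equivalence iff $X_y\to\pt$ is. No induction, no (c), no horn combinatorics.

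\textbf{The right-properness invocations in (c) do not work as stated.} For the forward direction you write that $f_z$ is ``the pullback of $f$ along $\{z\}\hra Z$'' and that this is ``a pullback along a fibration.'' It is not: $f_z$ is the pullback of $f$ along $Y_z\to Y$, and $Y_z\to Y$ (the base change of $\{z\}\to Z$ along $Y\to Z$) is essentially never a fibration. The same problem recurs in the converse when you argue that $j_z$ is a weak equivalence ``because trivial cofibrations pull back to weak equivalences along fibrations'': here the relevant map $X'_z\to X'$ is again not a fibration. Right properness alone does not give you that weak equivalences between fibrations over $Z$ are stable under arbitrary base change $Z'\to Z$; that is a separate (true) statement which the paper proves as \rl{heprop}(b) using the strong-deformation-retract argument of \re{sdr}(c). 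Accordingly the paper \emph{defers} the proof of (c) until after \rl{heprop}(b) is available, then uses it for the forward direction and (via the already-proved forward direction) to get $i_z$ a weak equivalence in the converse. If you want to keep (c) self-contained you must reproduce that SDR argument: a trivial cofibration between fibrations over $Z$ is a strong deformation retract over $Z$, and SDRs are manifestly preserved by pullback.
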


\begin{proof}
(a) Since $Y$ is cofibrant, the projection $X^Y\to Y^Y$ is a
trivial fibration (by \re{Cart} (a)). Hence its fiber
$\Map_Y(Y,X)$ is a contractible Kan complex (by \re{remmodcat}
(b)), therefore it is non-empty and connected by \re{pi0} (b).

(b) By the last assertion of \rt{Kan}, the fibration $f$ is
trivial if and only if its pullback $\tau^*(f)$ is a trivial
fibration for all $\tau:\Dt[n]\to Y$. Thus we may assume that
$Y=\Dt[n]$. Then for each $y\in\Dt[n]$, the inclusion
$y:\Dt[0]\to\Dt[n]$ is a weak equivalence. Thus $X_y\to X$ is a
weak equivalence, because $Sp$ is right proper. Hence, by
2-out-of-3,  $f$ is a weak equivalence if and only if
$X_y\to\Dt[0]$ is.

(c) will  be proven in \re{pfpropfib}.
\end{proof}

%\begin{Def} \label{D:we}
%We say that a morphism $f:X\to Y$ in $Sp$ is a {\em weak
%equivalence} if for every fibrant $Z\in Sp$, the induced map
%$\pi_0(f^*):\pi_0(Z^Y)\to\pi_0(Z^X)$ is a bijection.
%\end{Def}

\subsection{Simplicial Spaces}

\begin{Emp} \label{E:ssp}
{\bf Notation.} (a) By the category of {\em simplicial spaces}, we
mean the category of functors
$sSp=\Fun(\Dt^{op},Sp)=\Fun(\Dt^{op}\times\Dt^{op},Set)$.

(b) For $X\in sSp$ and $n,m\geq 0$, we set $X_n:=X([n])\in Sp$
and $X_{n,m}:=(X_n)_m\in Set$. For every morphism $f:X\to Y$ in
$sSp$, we denote by $f_n:X_n\to Y_n$ the corresponding morphism in
$Sp$.

(c) For every $\tau:[n]\to[m]$ in $\Dt$, we denote by
$\tau^*:X_m\to X_n$ the induced map of spaces. We also set
$\dt_{k_0,\ldots,k_m}:=(\dt^{k_0,\ldots,k_m})^*:X_n\to X_m$,
$d_i:=(d^i)^*:X_n\to X_{n-1}$, and $e_i:=(e^i)^*:X_n\to X_m$.

(d) By \re{expsh}, category $sSp$ is Cartesian and has all limits
and colimits. For $X,Y\in sSp$, we define the mapping space
$\Map(Y,X):=(X^Y)_0\in Sp$.
\end{Emp}

\begin{Emp}
{\bf Two embeddings $Sp\hra sSp$.} (a) Denote by $diag:Sp\to sSp$ (resp. $diag:Set\to Sp$)
the map which associates to each $X$ the constant simplicial
space (resp. set) $[n]\mapsto X,\tau\mapsto\Id_X$.
For each $X\in Sp$, we denote the constant simplicial space $diag(X)\in sSp$ simply by $X$.  

(b) The embedding $diag:Set\to Sp$ gives rise to an embedding
$disc:Sp=\Fun(\Dt^{op},Set)\to sSp=\Fun(\Dt^{op},Sp)$. Then the image of $disc$, 
consists of {\em discrete simplicial spaces}, that is, $X\in sSp$
such that $X_n\in Sp$ is {\em discrete} (that is, each map $\Dt[1]\to
X_n$ is constant) for all $n$.

(c) We set $F[n]:=disc(\Dt[n])$, $\p F[n]:=disc(\p\Dt[n])$ and
$F^i[n]:=disc(\La^i[n])$.
\end{Emp}

\begin{Emp} \label{E:stbis}
{\bf Standard bisimplex.} (a) For $n,m\geq 0$, we set
$[n,m]:=([n],[m])\in \Dt^2$ and $\Box[n,m]:=F[n]\times \Dt[m]\in
sSp$. In particular, we have equalities $F[n]=\Box[n,0]$, $\Dt[m]=\Box[0,m]$
and $\pt=F[0]=\Dt[0]$.  

(b) Note that $\Box[n,m]$ is the functor
$\Hom_{\Dt\times\Dt}(\cdot,[n,m])$. Then, by the Yoneda lemma, we get identifications 
$\Hom(\Box[n,m],\Box[n',m'])=\Hom([n,m],[n',m'])$ and 
$\Hom(\Box[n,m],X)=X_{n,m}$. In particular,
we have identifications \\ $\Map(F[n],X)=X_n$ and $\Hom(F[n],F[m])=\Hom([n],[m])$.

(c) We also set $\p\Box[n,m]:=(\p F[n]\times \Dt[m])\sqcup_{(\p
F[n]\times\p\Dt[m])}(F[n]\times\p\Dt[m])$ and \\ $X_{\p
n}:=\Map(\p F[n],X)$.
\end{Emp}

\begin{Emp} \label{E:fibers2}
{\bf Fibers.} (a) For  $X\in sSp$, we say $x\in X$ instead of $x\in
X_{0,0}$, and $x\sim y\in X$ instead of $x\sim y\in X_0$. By
\re{stbis} (b), each $x\in X$ corresponds to a map $x:\pt\to X$.

(b) As in \re{fibers}, for every morphism $f:Y\to X$ in $sSp$, we
denote by $f^{-1}(x)$ or $Y_x$ the fiber product $\{x\}\times_X
Y:=\pt\times_{x,X}Y$ and call it {\em the fiber of $f$ at $x$}.

(c) For every $Z\in sSp$ and $X,Y\in sSp/Z$ we denote by
$\C{Map}_Z(X,Y)\in sSp$ the fiber of $Y^X\to Z^X$ over the
projection $(X\to Z)\in Z^X$. We also set \\
$\Map_Z(X,Y):=\C{Map}_Z(X,Y)_0\in Sp$.

%We also set $\Map(X,Y):=Y^X$ and call it the {\em space of maps
%from $X$ to $Y$}.
\end{Emp}

\begin{Def} \label{D:rfib}
We say that a map $f:X\to Y$ in $sSp$ is a {\em (Reedy) fibration}
if for every $n\geq 0$ the induced map $\bar{f}_n:X_n\to
Y_n\times_{Y_{\p n}}X_{\p n}$ is a Kan fibration in $Sp$.
\end{Def}

\begin{Thm} \label{T:Reedy}
Category $sSp$ has Cartesian proper model category such that
cofibrations and weak equivalences are degree-wise and fibrations
are Reedy fibrations.
\end{Thm}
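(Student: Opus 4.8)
The plan is to realize the asserted structure as the Reedy model structure on $\Fun(\Dt^{op},Sp)$, and then to transport properness and the Cartesian property from \rt{Kan} by a degree-wise reduction. Recall that $\Dt$ is a Reedy category, the degree of $[n]$ being $n$, with direct part the injective maps and inverse part the surjective maps; hence $sSp=\Fun(\Dt^{op},Sp)$ carries a Reedy model structure. In it a map $f:X\to Y$ is a weak equivalence iff every $f_n:X_n\to Y_n$ is a weak equivalence in $Sp$; it is a fibration iff for every $n$ the map $X_n\to Y_n\times_{M_nY}M_nX$ is a Kan fibration, where $M_nX:=\lim_{[k]\hookrightarrow[n],\,k<n}X_k$ is the matching space; and it is a cofibration iff for every $n$ the map $L_nY\sqcup_{L_nX}X_n\to Y_n$ is a cofibration in $Sp$, where $L_nX:=\colim_{[n]\twoheadrightarrow[k],\,k<n}X_k$ is the latching space. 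A direct check identifies $M_nX$ with $X_{\p n}=\Map(\p F[n],X)$, so the fibrations just described are exactly those of \rd{rfib}.

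The only substantial point is the existence of this model structure, i.e.\ the lifting axiom CM4 and the factorization axiom CM5. I would prove it by the standard skeletal induction: to factor $f:X\to Y$ one builds a tower $X=Z^{-1}\to Z^{0}\to Z^{1}\to\cdots$ whose maps agree with $f$ in degrees below $n$, at stage $n$ factoring the relevant relative latching (resp.\ matching) map inside $Sp$ using CM5 for $Sp$ (\rt{Kan}) and extending over the remaining degrees; the colimit gives the desired factorization, and CM4 follows from the same constructions together with the retract argument. (Alternatively this existence may be quoted from the standard theory of Reedy model structures; see e.g.\ \cite{GJ} for bisimplicial sets.) It then remains to identify the Reedy cofibrations with the cofibrations of the statement: a Reedy cofibration is always a degree-wise cofibration (a general fact about Reedy structures, proved by induction on degree), and conversely every degree-wise monomorphism of simplicial spaces is a Reedy cofibration, which is the Eilenberg--Zilber lemma. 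With this in hand, CM1 holds because $sSp$ is complete and cocomplete (\re{ssp}), while CM2 and CM3 are immediate from the degree-wise descriptions, so $sSp$ has the model structure of the statement.

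For properness I would use two standard consequences of the Reedy description. First, a Reedy fibration is degree-wise a Kan fibration: its $n$-th component factors as $X_n\to Y_n\times_{M_nY}M_nX\to Y_n$, where the first map is a Kan fibration by definition and the second is a pullback of the matching-object map $M_nX\to M_nY$, itself a Kan fibration by induction on $n$. Second, a cofibration is degree-wise a monomorphism, by the previous paragraph. Since limits and colimits in $sSp$ are computed degree-wise, the pullback of a weak equivalence along a Reedy fibration is, in each degree, the pullback of a weak equivalence along a Kan fibration, hence a weak equivalence because $Sp$ is right proper (\rt{Kan}); dually, a pushout of a weak equivalence along a cofibration is a weak equivalence because $Sp$ is left proper. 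Hence $sSp$ is proper.

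Finally, $sSp$ is Cartesian as a category (\re{ssp}) and its final object $\pt$ is cofibrant, since in each degree $\pt_n=\Dt[0]$ and $\emptyset\to\Dt[0]$ is a cofibration in $Sp$; so by \rl{Cart} it suffices to show that for cofibrations $i:A\to B$ and $i':A'\to B'$ in $sSp$ the induced map $j:(A\times B')\sqcup_{(A\times A')}(B\times A')\to B\times B'$ is a cofibration, and a weak equivalence whenever $i$ or $i'$ is. As products and pushouts in $sSp$ are degree-wise, $j_n$ is exactly the analogous pushout-product map built from $i_n$ and $i'_n$ in $Sp$; since $Sp$ is a Cartesian model category (\rt{Kan}), \rl{Cart} applied to $Sp$ shows $j_n$ is a cofibration in $Sp$, and a trivial cofibration whenever $i_n$ or $i'_n$ is a weak equivalence. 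Because cofibrations and weak equivalences in $sSp$ are detected degree-wise, $j$ has the required property, and \rl{Cart} now gives that $sSp$ is a Cartesian model category. The main obstacle in all of this is the existence of the Reedy structure itself (CM4 and CM5); everything else reduces, through degree-wise (co)limits, to the properties of $Sp$ established in \rt{Kan}.
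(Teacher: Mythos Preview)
Your argument is correct and coincides with the paper's: the paper cites \cite[IV, Thm~3.9]{GJ} for existence and properness of the Reedy model structure, and proves the Cartesian property exactly as you do, via \rl{Cart} and the observation that products, pushouts, cofibrations and weak equivalences are all degree-wise so the pushout-product condition reduces to the one in $Sp$. Your extra paragraphs on existence and properness are a faithful unpacking of what the citation to \cite{GJ} covers; the only place to be slightly more careful is your inductive claim that $M_nX\to M_nY$ is a Kan fibration, which is proved not from the levelwise hypothesis but by filtering $\p F[n]$ by skeleta and using the Reedy relative-matching condition at each lower degree.
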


\begin{proof}
All the assertions, except that the model category is Cartesian,
are proven in \cite[IV,Thm 3.9]{GJ}. For the remaining assertion, we 
use \rl{Cart}. Now the assertion follows from the fact
that pushouts, products, cofibrations and weak equivalences are
defined degree-wise, and the model category $Sp$ is Cartesian.
\end{proof}

\begin{Emp} \label{E:rfib}
{\bf Remarks.} (a) It follows from \rl{RLP} that a map $f:X\to Y$
in $sSp$ is a fibration if and only if it has the RLP with respect
to all inclusions\\
$(\p F[n]\times\Dt[m])\sqcup_{(\p F[n]\times\La^i[m])}
(F[n]\times\La^i[m])\hra\Box[n,m]$.

(b) If $f:X\to Y$ is a Reedy fibration, then the map $f_n:X_n\to Y_n$ is a fibration for all $n$. 
Indeed, $f_0=\bar{f}_0$ is a fibration by definition, 
$f^{F[n]}:X^{F[n]}\to Y^{F[n]}$ is a fibration by \re{Cart} (a), hence $f_n=(f^{F[n]})_0$ is a fibration.  

(c) Let $X\to Y$ be a fibration in $sSp$, and let $i:A\to B$ be a
cofibration over $Y$. Then the map $X^B\to
Y^B\times_{Y^A} X^A$ is fibration by \rt{Reedy}. Hence taking fibers 
at $(B\to Y)\in Y^B$ and passing
to zero spaces, we get that the map 
$i^*:\Map_Y(B,X)\to\Map_Y(A,X)$ is a fibration, thus 
 $\Map_Y(B,X)$ is a Kan complex. 
\end{Emp}

\begin{Emp} \label{E:he}
{\bf Homotopy equivalence.} Let $Z\in sSp$ (resp. $Z\in Sp$).

(a) We say that maps $f:X\to Y$ and $g:X\to Y$ in $sSp/Z$ (resp.
$Sp/Z$) are {\em homotopic over $Z$} and write $f\sim_Z g$, if
$f\sim g$ as elements of $\Map_Z(X,Y)$.

Notice that if $Y\to Z$ is a fibration, then  $\Map_Z(X,Y)\in Sp$ is
a Kan complex (by \re{rfib} (c)), thus by \re{pi0} (c) $f\sim_Z g$
means that there exists a map $h:X\times \Dt[1]\to Y$ over $Z$ such that $h|_0=f$ and
$h|_1=f$.

(b) We say that a map $f:X\to Y$ is a {\em homotopy equivalence}
over $Z$, if there exists a map $g:Y\to X$ over $Z$, called a
{\em homotopy inverse} of $f$, such that $f\circ g\sim_Z\Id_Y$ and
$g\circ f\sim_Z\Id_X$.
\end{Emp}

\begin{Emp} \label{E:herem}
{\bf Remarks.} (a) Let $f:X\to Y$ be a homotopy equivalence over
$Z$ with homotopy inverse $g$. Then for every $\tau:Z'\to Z$, the
pullback $\tau^*(f)$ is a homotopy equivalence over $Z'$ with
homotopy inverse $\tau^*(g)$. Similarly, for every $K\in sSp$, the
map $f^K:X^K\to Y^K$ is a homotopy equivalence with homotopy
inverse $g^K:Y^K\to X^K$. Also, a composition of homotopy
equivalences is a homotopy equivalence.

(b) Any homotopy equivalence is a weak equivalence. Indeed, the
assertion for $Sp$ follows from the fact that if $f\sim_Z g$, then
the geometric realizations satisfy $|f|\sim_{|Z|}|g|$, and the
assertion for $sSp$ follows from that for $Sp$.
\end{Emp}
%We will say that $X$ and $Y$ are {\em (weakly) homotopic} and
%write $X\sim Y$ ($X\sim_w Y$) if there exists a (weak) homotopy
%equivalence $X\to Y$.

\begin{Emp} \label{E:sdr}
{\bf Strong deformation retract.} Let $Z\in sSp$ (resp. $Z\in
Sp$).

(a) We say that an inclusion $i:Y\hra X$ over $Z$ is a {\em strong
deformation retract over $Z$}, if there exists a map $h:X\times
\Dt[1]\to X$ over $Z$ such that $h|_0=\Id_X$, and $h|_1(X)\subset Y$,
$h|_{Y\times\Dt[1]}$ is $Y\times\Dt[1]\overset{\pr_2}{\lra}Y\hra
X$.

(b) If $i:Y\hra X$ is a strong deformation retract over $Z$, then
$i$ is a homotopy equivalence over $Z$, and $h|_1:X\to Y$ is its
homotopy inverse. Also in this case, $Y\to Z$ is a retract of
$X\to Z$. In particular, if $X\to Z$ is a fibration, then $Y\to Z$
is a fibration as well (by CM3).

(c) Conversely, a trivial cofibration $i:Y\to X$ between
fibrations over $Z$ is a strong deformation retract over $Z$.

\begin{proof}
Since $i:Y\to X$ is trivial cofibration, while $Y\to Z$ is a
fibration, there exists a map $p:X\to Y$ over $Y$ such that
$p\circ i=\Id_{Y}$. Next, the induced map $(\p\Dt[1]\times
X)\sqcup_{(\p\Dt[1]\times Y)}(\Dt[1]\times Y)\hra\Dt[1]\times X$
is a trivial cofibration (see \rl{Cart}). Since $X\to Z$ is a
fibration, there exists a map $h:\Dt[1]\times X\to X$ over $Z$ such that
$h|_0=\Id_X$, $h|_1=p$ and $h|_{\Dt[1]\times Y}=\pr_2$.
\end{proof}
\end{Emp}

\begin{Lem} \label{L:heprop}
(a) A morphism $f:X\to Y$ over $Z$ is a homotopy equivalence over
$Z$ if and only if for every map $\al:K\to Z$ in $sSp$, the induced map
$\pi_0(\Map_Z(K,X))\to \pi_0(\Map_Z(K,Y))$ is a bijection.

(b) Every weak equivalence between fibrations is a homotopy
equivalence. In particular, a pullback of a weak equivalence
between fibrations is a weak equivalence.

(c) For each fibration $X\to Y\times\Dt[1]$, there exists a weak
equivalence $X|_0\to X|_1$ of fibrations over $Y$.

(d) Let $f:Y_A\to A$ be a fibration and let $A\hra B$ be trivial
cofibration. Then there exists a fibration $g:Y_B\to B$, whose
restriction to $A$ is $f$.
\end{Lem}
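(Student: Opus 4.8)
I would prove the four parts in the order stated; the only real difficulty is in part~(d).

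\textbf{(a)} The plan is to exploit that homotopies over $Z$ can be precomposed. If $h\colon X'\times\Dt[1]\to Y'$ is a homotopy over $Z$ between $u,v\colon X'\to Y'$ and $\psi\colon K\to X'$ is a map over $Z$, then $h\circ(\psi\times\Id_{\Dt[1]})$ is an edge of $\Map_Z(K,Y')$ from $u\circ\psi$ to $v\circ\psi$; chaining such edges along a zig-zag shows that $u\sim_Z v$ implies $u\circ\psi\sim_Z v\circ\psi$ for every $\psi$, so $u$ and $v$ induce the same map $\pi_0\Map_Z(K,X')\to\pi_0\Map_Z(K,Y')$. Writing $f_*$ for the map induced by $f$ on these $\pi_0$'s: if $f$ has a homotopy inverse $g$ over $Z$, then applying the above to $f\circ g\sim_Z\Id_Y$ and $g\circ f\sim_Z\Id_X$ shows that $\pi_0(f_*)$ and $\pi_0(g_*)$ are mutually inverse, so $\pi_0(f_*)$ is a bijection for every $K\to Z$. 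Conversely, assuming all these are bijections, I would use surjectivity for $K=Y$ (with its structure map) to get $g\colon Y\to X$ over $Z$ with $f\circ g\sim_Z\Id_Y$; precomposing that homotopy with $f$ gives $f\circ(g\circ f)\sim_Z f=f\circ\Id_X$, and injectivity of $\pi_0(f_*)\colon\pi_0\Map_Z(X,X)\to\pi_0\Map_Z(X,Y)$ then forces $g\circ f\sim_Z\Id_X$, so $g$ is a homotopy inverse of $f$.

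\textbf{(b)} Factor $f$ by CM5 as $X\to X''\to Y$ with $i\colon X\to X''$ a trivial cofibration and $p\colon X''\to Y$ a fibration; by 2-out-of-3, $p$ is a trivial fibration. Since $Y\to Z$ is a fibration, so is $X''\to Z$, hence $i$ is a trivial cofibration between fibrations over $Z$, and therefore a strong deformation retract over $Z$ by \re{sdr}(c) --- in particular a homotopy equivalence over $Z$. For $p$: since $Y$ is cofibrant, CM4 gives a section $s$ of $p$, automatically over $Z$; applying CM4 again to the cofibration $(\p\Dt[1]\times X'')\hra(\Dt[1]\times X'')$ against the trivial fibration $p$ produces a homotopy over $Z$ between $s\circ p$ and $\Id_{X''}$, so $p$ is a homotopy equivalence over $Z$ as well. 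Hence $f=p\circ i$ is a homotopy equivalence over $Z$ (\re{herem}). For the last assertion: a homotopy equivalence over $Z$ pulls back, along any $Z'\to Z$, to a homotopy equivalence over $Z'$ (\re{herem}), hence to a weak equivalence.

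\textbf{(c)} The inclusions $Y\times\{0\}$ and $Y\times\{1\}$ into $Y\times\Dt[1]$ are trivial cofibrations (because $\{0\},\{1\}\hra\Dt[1]$ are and $sSp$ is a Cartesian model category, \rt{Reedy} together with \rl{Cart}); pulling them back along the fibration $X\to Y\times\Dt[1]$ and using right-properness, the maps $X|_0\hra X$ and $X|_1\hra X$ are weak equivalences. Let $\pi\colon X|_0\to Y$ be the structure map; I would solve the lifting problem given by the map $X|_0\times\Dt[1]\to Y\times\Dt[1]$, $(x,t)\mapsto(\pi(x),t)$, and the inclusion $X|_0=X|_0\times\{0\}\hra X$, using CM4 for the trivial cofibration $X|_0\times\{0\}\hra X|_0\times\Dt[1]$ against the fibration $X\to Y\times\Dt[1]$; call the lift $\wt H\colon X|_0\times\Dt[1]\to X$. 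Its restriction to $X|_0\times\{1\}$ lands in $X|_1$ and defines a map $\phi\colon X|_0\to X|_1$ of fibrations over $Y$. Since $\wt H|_0$ is the weak equivalence $X|_0\hra X$ and both end-inclusions $X|_0\rightrightarrows X|_0\times\Dt[1]$ are weak equivalences, 2-out-of-3 makes $\wt H$, and then $\wt H|_1$, weak equivalences; as $\wt H|_1$ equals $\phi$ followed by the weak equivalence $X|_1\hra X$, the map $\phi$ is a weak equivalence (and, by (b), even a homotopy equivalence over $Y$).

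\textbf{(d)} Factor the composite $Y_A\to A\hra B$ by CM5 as a trivial cofibration $k\colon Y_A\to\wt Y$ followed by a fibration $q\colon\wt Y\to B$, and set $\wt Y_A:=q^{-1}(A)$. Then $\wt Y_A\to A$ is a fibration, and since $\wt Y_A\hra\wt Y$ is the pullback of the weak equivalence $A\hra B$ along $q$, the induced map $Y_A\to\wt Y_A$ is a weak equivalence (by 2-out-of-3 and right-properness) and a cofibration, hence a trivial cofibration; by \re{sdr}(c) it is a strong deformation retract over $A$, so it admits a retraction $r\colon\wt Y_A\to Y_A$ over $A$. I would then form the pushout $Y_B:=Y_A\sqcup_{\wt Y_A}\wt Y$ along $r$ and the cofibration $\wt Y_A\hra\wt Y$, with its projection $g\colon Y_B\to B$. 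Because $\wt Y_A=q^{-1}(A)$, every bisimplex of $\wt Y$ not already lying in $\wt Y_A$ maps into $B\sm A$, so $g^{-1}(A)=Y_A$ on the nose, i.e. $g$ restricts over $A$ to $f$. The hard part --- and the main obstacle of the whole lemma --- is to show that $g$ is again a fibration: a pushout of the fibration $q$ along a cofibration is not formally a fibration, so the deformation-retraction data must be used more forcefully. I see two options: extend the retraction (using \rl{Cart} and CM4) to a fibrewise deformation retraction $\wt Y\times\Dt[1]\to\wt Y$ over $B$, arranged compatibly with the quotient $\wt Y\to Y_B$, and deduce that $g$ is a fibration from this; or avoid the pushout altogether and build $Y_B$ from $Y_A$ by a relative small-object argument --- attaching only those cells (generating trivial cofibrations for the Reedy model structure, cf. \re{rfib}(a)) whose defining bisimplex of $B$ does not already lie in $A$ --- using that any lifting problem lying over $A$ is already solvable because $Y_A\to A$ is a fibration.
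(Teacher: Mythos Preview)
Parts (a) and (b) are essentially the paper's arguments; your hands-on treatment of the trivial-fibration case in (b) via an explicit section and homotopy is a minor variant of the paper's appeal to (a), and works.

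Your approach to (c) is different from the paper's and correct. The paper instead forms the cocylinder $\wt X := X^{\Dt[1]}\times_{(Y\times\Dt[1])^{\Dt[1]}} Y$ and observes, via the Cartesian model structure, that both restriction maps $\wt X\to X|_i$ are trivial fibrations over $Y$, so both $X|_i$ are homotopy equivalent to $\wt X$ over $Y$ by (b). Your direct lifting constructs the map $X|_0\to X|_1$ explicitly; the paper's zig-zag avoids the right-properness step but only produces the map implicitly.

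For (d) there is a genuine gap. Your pushout $Y_B=Y_A\sqcup_{\wt Y_A}\wt Y$ is not the paper's construction, and neither of your options for showing it is a fibration is a proof: a fibrewise deformation retraction of $\wt Y$ has no obvious target compatible with the quotient map, and the relative small-object sketch does not explain why the resulting object restricts to $Y_A$ on the nose. The paper's idea is to replace the pushout by a \emph{subspace}: define $Y_B\subset\wt Y$ to be the \emph{largest} simplicial subspace with $Y_B\times_B A=Y_A$, namely $(Y_B)_{n,m}$ consists of those $\tau\colon\Box[n,m]\to\wt Y$ sending $\tau^{-1}(\wt Y_A)$ into $Y_A$. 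The strong-deformation-retract data $g\colon\wt Y_A\times\Dt[1]\to\wt Y_A$ that you already produced over $A$ extend, exactly by the \rl{Cart}/CM4 mechanism you invoked, to a map $h\colon\wt Y\times\Dt[1]\to\wt Y$ over $B$ with $h|_0=\Id$, $h|_{Y_B\times\Dt[1]}=\pr_1$, and $h(\wt Y_A\times\{1\})\subset Y_A$. The maximality of $Y_B$ is now the decisive point: since the part of $h|_1(\wt Y)$ lying over $A$ sits inside $Y_A$, the defining property forces $h(\wt Y\times\{1\})\subset Y_B$. Hence $Y_B$ is a strong deformation retract of $\wt Y$ over $B$, so a retract of a fibration over $B$, so itself a fibration over $B$ by CM3. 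This is the content of \rl{fib}; your first option was pointed in the right direction but missed that the extended retraction needs a target characterized by a universal property, and that a subspace (not a quotient) supplies one.
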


\begin{proof}
(a) If $f$ is a homotopy equivalence, then the induced map
$\Map_Z(K,X)\to\Map_Z(K,Y)$ is a homotopy equivalence (by
\re{herem} (a)), thus the assertion follows from \re{herem} (b)
and \re{pi0} (b). Conversely, applying the assumption for the
projection $Y\to Z$, we find a morphism $g:Y\to X$ over $Z$ such
that $f\circ g\sim_Z \Id_Y$. Next applying it to the projection
$X\to Z$, we find that  $g\circ f\sim_Z \Id_X$.

(b) By \re{remmodcat} (c) and \re{herem} (a), it is enough to
consider separately cases of a trivial cofibration and a trivial
fibration. When $f$ is a trivial cofibration, the assertion follows
from \re{sdr} (c) and (b). When $f$ is a trivial fibration, the
assertion follows from (a). Indeed, each map $\Map_Z(K,X)\to
\Map_Z(K,Y)$ is a trivial fibration, thus the map on $\pi_0$ is a
bijection by \re{pi0} (b). The last assertion follows from
\re{herem} (a).

(c) Since each map $\dt^i:\Dt[0]\hra\Dt[1]$ is a trivial
cofibration, the induced map $(\dt^i)^*:X^{\Dt[1]}\to X\times_{\Dt[1]\times
Y}(Y\times \Dt[1])^{\Dt[1]}$ is a trivial
fibration. Taking the pullback with respect to the inclusion
$Y\hra(Y\times \Dt[1])^{\Dt[1]}$, corresponding to $\Id_{Y\times
\Dt[1]}$, we get a trivial fibration
$\wt{X}:=X^{\Dt[1]}\times_{(\Dt[1]\times Y)^{\Dt[1]}}Y\to X|_i$
over $Y$. Thus both $X|_0$ and $X|_1$ are homotopy equivalent to
$\wt{X}$ over $Y$ (by (b)), hence they are homotopy equivalent.

(d) will be proven in \re{pfhepropd}.
\end{proof}

\begin{Emp} \label{E:contr}
{\bf Remark.} It follows from \rl{heprop} (b) and \re{pi0} (b)
that a Kan complex $X\in Sp$ is contractible if and only if the
projection $X\to\pt$ is a homotopy equivalence. Thus by definition
this happens if and only if $X$ is non-empty and $\Id_X$ is
homotopic to a constant map $X\to\{x\}\subset X$.
\end{Emp}

\begin{Emp} \label{E:pfpropfib}
\begin{proof}[Proof of \rl{propfib} (c)]
If $f$ is a weak equivalence, then each $f_z:X_z\to Y_z$ is a
weak equivalence by \rl{heprop} (b).

Conversely, write $f$ as $p\circ i$, where $i:X\to X'$ 
is a trivial cofibration, and $p:X'\to Y$ is fibration. By the "only if" assertion, 
each $i_z$ is a weak equivalence. Since each $f_z$ is a trivial fibration by assumption,
each $p_z$ is a weak equivalence by 2-out-of-3. Since $p_z:X'_z\to Y_z$  is
a fibration, it is a trivial fibration. Hence all fibers of each $p_z$ are contractible.
Thus all fibers of $p$ are contractible, hence $p$ is a trivial
fibration by \rl{propfib} (b). Therefore $f$ is a weak equivalence.
\end{proof}
\end{Emp}

\subsection{Segal spaces} We follow closely \cite{Re}.

\begin{Emp} \label{E:Segal}
{\bf Notation.} (a) We say that $X\in sSp$ is a {\em Segal space},
if $X$ is fibrant, and
\[
\varphi_n=:\dt_{01}\times_{\dt_1}\ldots\times_{\dt_{n-1}}\dt_{n-1,n}:X_n\to
X_1\times_{X_0}\ldots\times _{X_0}X_1
\]
is a weak equivalence for each $n\geq 2$.

(b) Notice that since Reedy model category is Cartesian, when $X$
is fibrant, the map $\varphi_n$ is a fibration. Thus a
fibrant object $X\in sSp$ is a Segal space if and only if each map
$\varphi_n$ is a trivial fibration.
\end{Emp}

\begin{Emp} \label{E:objmap}
{\bf "Objects" and "Mapping spaces".} Let $X$ be a Segal space.

(a) By a {\em space of objects} of $X$ we mean space $X_0$. We set
$\Ob X:=X_{0,0}$ and call it {\em the set of objects} of $X$. As
in \re{fibers2}, we say $x\in X$ instead of $x\in\Ob X$.

(b) For each $x,y\in X$, we denote by $\map(x,y)=\map_X(x,y)\in
Sp$ the fiber of $(\dt_0,\dt_1):X_1\to X_0\times X_0$ over
$(x,y)$. Notice that since $X$ is fibrant, the map $(\dt_0,\dt_1)$
is a fibration, thus each space $\map(x,y)$ is a Kan complex. For each
$x,y,z\in X$,  we denote by $\map(x,y,z)$  the fiber of
$(\dt_0,\dt_1,\dt_2):X_3\to(X_0)^3$ over $(x,y,z)$.

(c) For each $x\in X$, we set $\id_x:=\dt_{0,0}(x)\in\map_X(x,x)$.

(d) We call a map between Segal spaces  $f:X\to Y$ is {\em fully
faithful}, if for every $x,y\in X$ the induced map
$\map_X(x,y)\to\map_Y(f(x),f(y))$ is a weak equivalence. 
\end{Emp}

\begin{Emp} \label{E:ho}
{\bf The homotopy category.} (a) Let $X$ be a Segal space, and
$x,y,z\in X$. Then the trivial fibration $\varphi_2:X_2\to
X_1\times_{X_0}X_1$ induces a trivial fibration
$\map(x,y,z)\to\map(x,y)\times\map(y,z)$ (by \re{remmodcat} (b)),
which by \rl{propfib} (a) has a section $s$, unique up to
homotopy. Thus we have a well-defined map
\[
[s]:=\pi_0(s):\pi_0(\map(x,y))\times\pi_0(\map(y,z))\to\pi_0(\map(x,y,z)).
\]

(b) The map $\dt_{02}:X_2\to X_1$ induces a map
$\dt_{02}:\map(x,y,z)\to \map(x,z)$. Therefore for every
$[\al]\in\pi_0(\map(x,y))$ and $[\beta]\in\pi_0(\map(y,z))$ we can
define %their composition
\begin{equation} \label{Eq:comp}
[\beta]\circ[\al]:=\pi_0(\dt_{02})(
[s]([\al],[\beta]))\in\pi_0(\map(x,z)).
\end{equation}
It is not difficult to prove (see \cite[Prop. 5.4]{Re}) that this
composition is associative and satisfies 
$[\al]\circ[\id_x]=[\al]=[\id_y]\circ [\al]$ for all
$\al\in\map(x,y)$.

(c) Using (b), one can associate to $X$ its {\em homotopy
category} $\Ho X$, whose objects are $\Ob X$, morphisms defined by
$\Hom_{\Ho X}(x,y):=\pi_0(\map_X(x,y))$, the composition is
defined by \form{comp}, and the identity map is
$[\id_x]\in\Hom_{\Ho X}(x,x)$.
\end{Emp}

\begin{Emp} \label{E:css}
{\bf Complete Segal spaces.} Let $X$ be a Segal space.

(a) We say that $\al\in\map_X(x,y)\subset X_1$ is a {\em homotopy
equivalence}, if the corresponding morphism $[\al]\in \Mor \Ho X$
is an isomorphism. Explicitly, this means that there exist
$\beta\in \map(y,x)$ such that $[\beta]\circ[\al]=[\id_x]$ and
$[\al]\circ [\beta]=[\id_y]$.

(b) Let $X_{heq}\subset X_1$ be the maximal subspace such that each
$\al\in X_{heq}$ is a homotopy equivalence. It is not difficult to
prove (see \cite[Lem. 5.8]{Re}) that $X_{heq}\subset X_1$ is a
union of connected components.

(c) Notice that since each $[\id_x]$ is an isomorphism, we have
$\id_x\in X_{heq}$ for every $x\in X$. Therefore the map
$s_0:=\dt_{0,0}:X_0\to X_1$ factors through $X_{heq}$. We say that $X$ is
called a {\em  complete Segal space}, if the map $s_0:X_0\to
X_{heq}$ is a weak equivalence.
\end{Emp}

\begin{Lem} \label{L:css}
Let $X\in Sp$ be a Segal space.

(a) Let $X'_1\subset X_1$ be the union of connected components,
intersecting $s_0(X_0)$, and set
$X'_3:=\dt^{-1}_{02}(X'_1)\cap\dt_{13}^{-1}(X'_1)\subset X_3$.
Then $X'_1\subset X_{heq}$ and $\dt_{12}(X'_3)=X_{heq}$.

(b) $X$ is complete if and only if $\dt_0:X_{heq}\hra X_1\to X_0$
is a trivial fibration.
\end{Lem}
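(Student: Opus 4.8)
\textbf{Plan for the proof of Lemma \ref{L:css}.}

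For part (a), the plan is to unwind the definitions. A point of $X'_3$ is (up to weak equivalence, using the Segal condition $\varphi_3$) a triple of composable $1$-simplices $(\al,\beta,\gm)$ with $\dt_{02}$-composite $\beta\circ\al$ and $\dt_{13}$-composite $\gm\circ\beta$ both lying in $X'_1$. First I would observe that, since $X'_1$ is by construction the union of components meeting $s_0(X_0)$, every element of $X'_1$ represents an isomorphism in $\Ho X$: indeed $[\id_x]$ is an isomorphism and being an isomorphism depends only on the connected component, because $X_{heq}$ is a union of components (by \re{css} (b)). This gives $X'_1\subset X_{heq}$. For the equality $\dt_{12}(X'_3)=X_{heq}$, the inclusion $\dt_{12}(X'_3)\subset X_{heq}$ follows from the fact that if $[\al]$ and $[\gm\circ\beta\circ\al]$-type composites are isomorphisms then the middle arrow $[\beta]$ is an isomorphism (two-out-of-three in the ordinary category $\Ho X$, applied to the composites recorded by $\dt_{02}$ and $\dt_{13}$). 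For the reverse inclusion, given $\beta\in X_{heq}$ with homotopy inverse $\beta'$, I would use the section $[s]$ of \re{ho} (a) together with the unit relations to produce a $3$-simplex whose faces $\dt_{02},\dt_{13}$ are homotopic to $\id$, hence land in $X'_1$, and whose $\dt_{12}$-face is $\beta$; concretely, fill in the outer faces by $\id_x$ and $\id_z$ via the degeneracies and the composition structure, using that $X'_3$ is a union of components so homotopic fillers suffice.

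For part (b), the plan is to relate completeness, i.e. $s_0:X_0\to X_{heq}$ being a weak equivalence, to $\dt_0:X_{heq}\to X_0$ being a trivial fibration, using that $\dt_0\circ s_0=\id_{X_0}$. First note $X_{heq}\subset X_1$ is a union of connected components, hence $X_{heq}\to X_0\times X_0$ is a fibration (restriction of the fibration $(\dt_0,\dt_1)$ to a union of components), and in particular $\dt_0:X_{heq}\to X_0$ is a fibration. If $\dt_0$ is a trivial fibration, then since $\dt_0\circ s_0=\id$ and $\dt_0$ is a weak equivalence, $s_0$ is a weak equivalence by two-out-of-three, so $X$ is complete. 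Conversely, if $s_0$ is a weak equivalence, then again by two-out-of-three $\dt_0:X_{heq}\to X_0$ is a weak equivalence; being also a fibration, it is a trivial fibration.

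The main obstacle I expect is the reverse inclusion $X_{heq}\subset\dt_{12}(X'_3)$ in part (a): one must genuinely build a $3$-simplex in $X_3$ with prescribed homotopy-class data on three of its faces, and this requires carefully using the trivial fibrations $\varphi_2$, $\varphi_3$ and the (homotopy-unique) section $s$ of \re{ho}(a), together with the fact that $X'_1$ and $X'_3$ are unions of connected components so that it suffices to control everything up to homotopy rather than on the nose. Part (b) is essentially formal once one knows $X_{heq}$ is a union of components and hence $\dt_0|_{X_{heq}}$ is a fibration.
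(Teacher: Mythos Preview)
Your plan is correct and matches the paper's proof almost exactly, including the use of \re{css}(b) for $X'_1\subset X_{heq}$, two-out-of-three in $\Ho X$ for $\dt_{12}(X'_3)\subset X_{heq}$, and the section/two-out-of-three argument for part (b).

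The one place where your description is muddled is the phrase ``fill in the outer faces by $\id_x$ and $\id_z$ via the degeneracies'' for the inclusion $X_{heq}\subset\dt_{12}(X'_3)$. Taken literally this does not work: if the outer edges $\dt_{01},\dt_{23}$ were identities then $\dt_{02}\sim\beta$ and $\dt_{13}\sim\beta$, which need not lie in $X'_1$. What you actually want (and what the paper does) is to fill the outer edges $\dt_{01},\dt_{23}$ with the homotopy inverse $\beta'$: since $\varphi_3$ is a trivial fibration it is surjective, so there is $\gm\in X_3$ with $\dt_{01}(\gm)=\dt_{23}(\gm)=\beta'$ and $\dt_{12}(\gm)=\beta$, and then $\dt_{02}(\gm)\sim\id$ and $\dt_{13}(\gm)\sim\id$ land in $X'_1$. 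Your surrounding sentences (invoking $\beta'$ and aiming for $\dt_{02},\dt_{13}\sim\id$) show you had this in mind; just be precise about which edges get which filler.
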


\begin{proof}
(a) Since $X_{heq}\subset X_1$ is a union of connected components,  inclusion
$s_0(X_0)\subset X_{heq}$ implies that $X'_1\subset X_{heq}$. 
Next, for each $\al\in X'_3$ we have
$\dt_{02}(\al),\dt_{13}(\al)\in X'_1\subset X_{heq}$, hence 
$[\dt_{02}(\al)]=[\dt_{12}(\al)]\circ
[\dt_{01}(\al)]$ and $[\dt_{13}(\al)]=[\dt_{23}(\al)]\circ
[\dt_{12}(\al)]$ are isomorphisms in $\Ho X$. Therefore 
$[\dt_{12}(\al)]$ is isomorphism, thus $\dt_{12}(\al)\in X_{heq}$.

Conversely, let $\al\in \map(x,y)\subset X_{heq}$ and let
$\beta\in \map(y,x)$ such that $[\beta]\circ[\al]=[\id_x]$ and
$[\al]\circ [\beta]=[\id_y]$. Since  $\varphi_3$ is a trivial
fibration, it is surjective. Thus there exists $\gm\in X_3$ such that
$\dt_{01}(\gm)=\dt_{23}(\gm)=\beta$ and $\dt_{12}(\gm)=\al$. Then,
by assumption, $\dt_{02}(\gm)\sim\id_y$ and
$\dt_{13}(\gm)\sim\id_x$, thus $\gm\in X'_3$.

(c) By \re{css} (b), the composition $X_{heq}\hra X_1\to X_0\times X_0$ is a
fibration, thus a projection $\dt_0:X_{heq}\to X_0$ is a
fibration. Since $\dt_0\circ s_0=\Id_{X_0}$, we conclude that
$s_0:X_0\to X_{heq}$ is a weak equivalence if and only if
$\dt_0:X_{heq}\to X_0$ is a trivial fibration.
\end{proof}

\begin{Emp} \label{E:CartSS}
{\bf Cartesian structure.} Rezk showed (see \cite[Cor 7.3]{Re})
that if $X$ is a (complete) Segal space, then $X^K$ is a
(complete) Segal space for every $K\in sSp$.
\end{Emp}

\section{The Yoneda lemma}

\subsection{Left fibrations}
\begin{Def} \label{D:left}
We call a fibration $f:X\to Y$ in $sSp$ a {\em left
fibration}, if the map $(f_*,(\dt^0)^*):X^{F[1]}\to X\times_Y
Y^{F[1]}$, induced by $\dt^0:F[0]\hra F[1]$, is a
trivial fibration.
\end{Def}

\begin{Lem} \label{L:left}
(a) A pullback of a left fibration is a left fibration.

(b) If $f:X\to Y$ is a left fibration, then $f^Z:X^Z\to Y^Z$ is a
left fibration for every $Z\in sSp$.
\end{Lem}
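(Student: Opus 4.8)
The plan is to reduce both assertions to lifting-property statements and then exploit the fact that trivial fibrations are detected by the RLP against cofibrations (\rl{modcat}), while the class of left fibrations is built from two conditions: being a Reedy fibration, and having the map $(f_*,(\dt^0)^*)\colon X^{F[1]}\to X\times_Y Y^{F[1]}$ be a trivial fibration. For part (a), let $g\colon Y'\to Y$ be any morphism and form the pullback $X':=X\times_Y Y'\to Y'$. That $X'\to Y'$ is a Reedy fibration is \re{remmodcat} (b) (fibrations are stable under pullback), so it remains to check that $(f'_*,(\dt^0)^*)\colon (X')^{F[1]}\to X'\times_{Y'}(Y')^{F[1]}$ is a trivial fibration. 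The key point is that forming $(-)^{F[1]}$ commutes with fiber products (it is a right adjoint), and $X'\times_{Y'}(Y')^{F[1]}$ is canonically the pullback of $X\times_Y Y^{F[1]}$ along $Y'\to Y$; chasing the definitions, the map for $f'$ is the pullback of the map for $f$ along $(Y')^{F[1]}\to Y^{F[1]}$ (or, more carefully, along the evident map $X'\times_{Y'}(Y')^{F[1]}\to X\times_Y Y^{F[1]}$). Since trivial fibrations are stable under pullback (\re{remmodcat} (b) again), we are done.

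For part (b), suppose $f\colon X\to Y$ is a left fibration and $Z\in sSp$ is arbitrary. First, $f^Z\colon X^Z\to Y^Z$ is a Reedy fibration: this is the instance of the Cartesian model structure on $sSp$ (\rt{Reedy}, via \re{Cart} (a), or directly \re{rfib}) applied to the cofibration $\emptyset\to Z$ — here one uses that every object of $sSp$ is cofibrant, which follows from cofibrations being degree-wise inclusions and $Sp$ having all objects cofibrant (\rt{Kan}). It then remains to show that $((f^Z)_*,(\dt^0)^*)\colon (X^Z)^{F[1]}\to X^Z\times_{Y^Z}(Y^Z)^{F[1]}$ is a trivial fibration. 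The crucial observation is the exponential identity $(X^Z)^{F[1]}=X^{Z\times F[1]}=(X^{F[1]})^Z$, compatible with the structure maps, so that this map is canonically identified with $h^Z$, where $h\colon X^{F[1]}\to X\times_Y Y^{F[1]}$ is the trivial fibration coming from the hypothesis that $f$ is a left fibration (one checks the target matches: $(X\times_Y Y^{F[1]})^Z=X^Z\times_{Y^Z}(Y^{F[1]})^Z=X^Z\times_{Y^Z}(Y^Z)^{F[1]}$, using that $(-)^Z$ preserves fiber products). By \re{Cart} (a), since $Z$ is cofibrant, $h^Z$ is a trivial fibration, which is exactly what we need.

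The main obstacle — really the only nonroutine part — is bookkeeping the canonical isomorphisms so that the structure maps genuinely match: one must verify that under $(X^Z)^{F[1]}\cong(X^{F[1]})^Z$ and $(X\times_Y Y^{F[1]})^Z\cong X^Z\times_{Y^Z}(Y^Z)^{F[1]}$ the map $((f^Z)_*,(\dt^0)^*)$ corresponds to $h^Z$ rather than to some twisted variant, and similarly in part (a) that the pullback square is the one it ought to be. This is a matter of unwinding the adjunctions $(-)^Z\dashv(-)\times Z$ and the functoriality of $(-)^{F[1]}$ in the coefficient $F[1]$ (via $\dt^0\colon F[0]\hra F[1]$); it is entirely formal. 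Once these identifications are in place, both statements follow immediately from stability of trivial fibrations under pullback (part (a)) and from \re{Cart} (a) together with cofibrancy of every object of $sSp$ (part (b)).
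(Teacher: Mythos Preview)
Your proposal is correct and follows essentially the same route as the paper: for (a) you reduce to stability of (trivial) fibrations under pullback, and for (b) you use the exponential identity $(X^Z)^{F[1]}=(X^{F[1]})^Z$ together with \re{Cart} (a) and cofibrancy of $Z$. The paper's proof is terser (in (a) it simply cites \re{remmodcat} (b) without unwinding the pullback square), but the argument is the same.
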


\begin{proof}
(a) follows from the fact that a pullback of a (trivial) fibration
is a (trivial) fibration (see \re{remmodcat} (b)).

(b) By definition, the map $X^{F[1]}\to X\times_Y Y^{F[1]}$ is a
trivial fibration. Since Reedy model structure is Cartesian, we
conclude that $f^Z$ is a fibration, while the map
$(X^Z)^{F[1]}=(X^{F[1]})^Z\to (X\times_Y
Y^{F[1]})^Z=X^Z\times_{Y_Z} (Y^Z)^{F[1]}$ is a trivial fibration (use \re{Cart}).
Thus $f^Z:X^Z\to Y^Z$ is a left fibration.
\end{proof}

%(Check whether needed???) If $f:X\to Y$ is Reedy fibration, then $f$
%is a left fibration if and only if the maps $q_n:X_{n+1}\to
%X_{n}\times_{Y_n}Y_{n+1}$, induced by the inclusion $[n]\hra[n+1]$,
% are weak equivalences.

%Indeed, we have to show that $p_n$ are weak equivalences for all
%$n\geq 1$ if and only if maps $q_n$ are weak equivalences for all $n\geq 0$.
%Next notice that $p_1=q_0$ and $p_{n+1}$ decomposes as a
%composition
%\[
%X_{n+1}\to X_n\times_{Y_n}Y_{n+1}\to
%(X_0\times_{Y_0}Y_n)\times_{Y_n}Y_{n+1}=X_0\times_{Y_0}Y_{n+1},
%\]
%that is, $p_{n+1}=(p_n\times_{Y_n}Y_{n+1})\circ q_n$. Also since
%$p_n$ is a map between fibrations over $Y_n$, we conclude that if
%$p_n$ is a weak equivalence, then so is $p_n\times_{Y_n}Y_{n+1}$.
%Thus if $p_n$ is a weak equivalence, then $p_{n+1}$ is a weak equivalence
%if and only if $q_n$ is. Using induction, this imply the assertion.

\begin{Lem} \label{L:leftcart}
Let $f:X\to Y$ be a fibration in $sSp$. The following conditions
are equivalent:

(a) $f$ is a left fibration.

(b) For every $n\geq 1$, the map $(f_*,(\dt^0)^*):X^{F[n]}\to
X\times_Y Y^{F[n]}$, induced by $\dt^0:[0]\hra[n]$,
is a trivial fibration.

(c) For every $n\geq 1$, the map $p_n:X_{n}\to
X_0\times_{Y_0}Y_{n}$, induced by 
$\dt^0:[0]\hra[n]$, is a trivial fibration.
\end{Lem}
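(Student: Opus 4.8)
The three conditions are linked by the chain (b)$\Rightarrow$(a) is immediate (take $n=1$), and (a)$\Rightarrow$(b) is an induction on $n$, while (b)$\Leftrightarrow$(c) follows by unwinding the mapping spaces. I would organize the proof around these implications.

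First I would prove (b)$\Leftrightarrow$(c). The point is that $X^{F[n]}\to X\times_Y Y^{F[n]}$ is a Reedy fibration (since the Reedy model structure is Cartesian and $F[n]$ is cofibrant, using \re{Cart}), so by the last assertion of \rt{Kan} combined with \rd{rfib}, it is a trivial fibration if and only if each map on the $m$-th space, after taking the appropriate matching-object correction, is a trivial fibration of simplicial sets; but a Reedy fibration between \emph{discrete-in-the-external-direction} objects is trivial iff its degree-$0$ part is. Concretely, $\Map(F[m],X^{F[n]})=(X^{F[n]})_m=(X_m)^{\Dt[n]}$ in $Sp$ — wait, one must be careful: $F[n]=\disc(\Dt[n])$, so $X^{F[n]}$ has $m$-th space $\Map(F[m],X^{F[n]})$, and since $F[n]$ is discrete, $\Hom(\Box[k,m],X^{F[n]})=\Hom(\Box[k,m]\times F[n],X)=X_{kn',m}$ appropriately; the upshot is that the map of simplicial spaces $X^{F[n]}\to X\times_Y Y^{F[n]}$ is, on the level of $0$-spaces, exactly $X_n\to X_0\times_{Y_0}Y_n$, and its higher structure is Reedy-trivially controlled by the $0$-space because $F[n]$ is discrete. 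So checking it is a trivial fibration of simplicial spaces reduces to checking $p_n$ is a trivial fibration of spaces. I would make this precise using \re{Cart}(a) to get that $X^{F[n]}\to Y^{F[n]}$ is a fibration and the identification $\Map(F[n],-)=(-)_n$ from \re{stbis}(b).

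Next, the heart of the argument, (a)$\Rightarrow$(b), by induction on $n\geq 1$, the base case $n=1$ being the definition. For the inductive step, the standard trick is to write $F[n]$ as an iterated pushout of $F[1]$'s along $F[0]$'s — namely $F[n]=F[1]\sqcup_{F[0]}F[n-1]$ where $F[0]\hra F[1]$ is $\dt^1$ (the vertex $1$) and $F[0]\hra F[n-1]$ is $\dt^0$ (the vertex $0$), the pushout along the spine. Applying the contravariant functor $X^{(-)}$ turns this pushout into a pullback: $X^{F[n]}=X^{F[1]}\times_{X^{F[0]}}X^{F[n-1]}=X^{F[1]}\times_X X^{F[n-1]}$, and similarly for $Y$. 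Now I would chase the diagram: the map $X^{F[n]}\to X\times_Y Y^{F[n]}$ factors through $X^{F[1]}\times_X X^{F[n-1]}\to X^{F[1]}\times_X(X\times_Y Y^{F[n-1]})\to X\times_Y Y^{F[n]}$, where the first map is a base change of the trivial fibration $X^{F[n-1]}\to X\times_Y Y^{F[n-1]}$ (inductive hypothesis, noting the fiber-product leg uses the vertex $0$) and is therefore a trivial fibration, and the second is a base change over $Y^{F[n-1]}$ of the trivial fibration $X^{F[1]}\to X\times_Y Y^{F[1]}$ (the case $n=1$, i.e. left fibration hypothesis), hence also a trivial fibration. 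A composite of trivial fibrations is a trivial fibration (\re{remmodcat}(a)), giving (b) for $n$. The one subtlety I would be careful about: the two occurrences of $F[0]\hra F[n-1]$ — the one in the spine decomposition versus the one ($\dt^0$) used to define the left-fibration condition — must be matched up, i.e. the pushout presentation must be arranged so that the gluing vertex on the $F[n-1]$ side is its \emph{initial} vertex $0$, which is exactly the vertex $\dt^0:[0]\hra[n]$ restricts to on $[n-1]$; this compatibility is what makes the inductive hypothesis applicable in the form stated.

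The main obstacle is the bookkeeping in the inductive step: verifying that the relevant square is genuinely a pullback and that the "wrong-way" structure maps line up so that the inductive hypothesis for $F[n-1]$ (phrased via $\dt^0:[0]\hra[n-1]$) and the left-fibration condition for $F[1]$ (phrased via $\dt^0:[0]\hra[1]$) combine to give the condition for $F[n]$ (phrased via $\dt^0:[0]\hra[n]$). Once the decomposition $F[n]=F[1]\sqcup_{F[0]}F[n-1]$ is set up with the correct vertices, everything else is formal from the fact that trivial fibrations are stable under base change and composition, together with the Cartesian structure of the Reedy model category (\rt{Reedy} and \re{Cart}).
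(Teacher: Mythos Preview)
Your inductive step for (a)$\Rightarrow$(b) rests on the claim $F[n]=F[1]\sqcup_{F[0]}F[n-1]$, and this is false for $n\geq 2$: the pushout you describe is the \emph{spine} of $F[n]$, a proper sub-simplicial-space that misses all the higher nondegenerate simplices (already for $n=2$ it lacks the $2$-cell). So the identification $X^{F[n]}=X^{F[1]}\times_X X^{F[n-1]}$ is simply wrong, and the whole factorization argument collapses. The paper avoids this by a retract argument instead of induction: using the Cartesian structure it knows $X^{F[1]\times F[n]}\to X^{F[n]}\times_{Y^{F[n]}}Y^{F[1]\times F[n]}$ is a trivial fibration, and then exhibits $\dt^0:[0]\hra[n+1]$ as a retract of $[n]\times\{0\}\hra[n]\times[1]$ in the arrow category of $\Dt$, which forces the map in (b) to be a retract of a trivial fibration.

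Your (b)$\Leftrightarrow$(c) is also only half right. The direction (b)$\Rightarrow$(c) is indeed just ``pass to zero spaces'', as the paper does. But the converse is not a formality: the $m$-th space of $X^{F[n]}\to X\times_Y Y^{F[n]}$ is $(X^{F[n]\times F[m]})_0\to X_m\times_{Y_m}(Y^{F[n]\times F[m]})_0$, and this is \emph{not} controlled by $p_n$ alone---the discreteness of $F[n]$ does not make $X^{F[n]}$ degenerate in any simplicial direction. The paper does not attempt (c)$\Rightarrow$(b) directly; instead it proves (c)$\Rightarrow$(a), and this is the real work of the lemma: one decomposes $F[n]\times F[1]$ as an iterated pushout $F[n+1]\sqcup_{F[n]}\cdots\sqcup_{F[n]}F[n+1]$ (which \emph{is} valid, unlike your spine decomposition), writes the $n$-th space of $X^{F[1]}\to X\times_Y Y^{F[1]}$ as a fiber product of the $p_{n+1}$'s over the $p_n$'s, and appeals to \rco{hfibprod}. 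One also needs $X,Y$ fibrant for the intermediate maps to be fibrations, so there is a separate reduction to that case via fibrant replacement and \rl{heprop}(d). None of this is visible in your outline.
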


\begin{proof}
(a)$\implies(b)$ By (a) and \re{Cart} (a), the map
\[
p:X^{F[1]\times F[n]}=(X^{F[1]})^{F[n]}\to (X\times_Y
Y^{F[1]})^{F[n]}=X^{F[n]}\times_{Y^{F[n]}}Y^{F[1]\times F[n]}
\]
is a trivial fibration. Since trivial fibrations are stable under
retracts (axiom CM3), it remains to show that the map
$X^{F[n+1]}\to X\times_Y Y^{F[n+1]}$ is a retract of $p$. It is
enough to show that  $\dt^0:[0]\hra[n+1]$ is a retract of
$\dt^0:[n]\times [0]\hra [n]\times [1]$. Consider maps
$[n+1]\overset{\al}{\lra} [n]\times[1]\overset{\beta}{\lra}
[n+1]$, where $\al(0)=(0,0)$, $\al(i)=(i-1,1)$ for $i\geq 1$ and 
$\beta(i,j)=(i+1)j$. Then $\beta\circ\al=\Id$, $\al(0)\in
[n]\times\{0\}$ and $\beta([n]\times\{0\})=0$, thus $\al$ and
$\beta$ realize $\dt^0:[0]\hra[n+1]$ as a retract of $\dt^0:[n]\times
[0]\hra[n]\times [1]$.

(b)$\implies$(c) Pass to the zero spaces.

(c)$\implies$(a) First we assume that $Y$ is fibrant. Since $f$ is
a fibration, $X$ is fibrant, and the induced map $X^{F[1]}\to
X\times_Y Y^{F[1]}$ is a fibration. It remains to show that each
map $(X^{F[1]})_n\to X_n\times_{Y_n} (Y^{F[1]})_n$ is a weak
equivalence. Since the map $X_n\to X_0\times_{Y_0} Y_n$ is a
trivial fibration, its pullback 
\[
X_n\times_{Y_n} (Y^{F[1]})_n\to (X_0\times_{Y_0} Y_n)\times_{Y_n}
(Y^{F[1]})_n=X_0\times_{Y_0}(Y^{F[1]})_n
\]
is a trivial fibration. It remains to show that the map
$(X^{F[1]})_n\to X_0\times_{Y_0}(Y^{F[1]})_n$ or, equivalently,
$q_n:(X^{F[1]\times F[n]})_0\to X_0\times_{Y_0}(Y^{F[1]\times
F[n]})_0$ is a weak equivalence.

We follow the argument of \cite[Lem 10.3]{Re}. Let $\gm^i:[n+1]\to
[n]\times[1]$ (resp. $\epsilon^i:[n]\to [n]\times[1]$) be the map
with sends $j$ to $(j,0)$, if $j\leq i$ and to $(j-1,1)$ (resp.
$(j,1)$) otherwise. Then maps $\gm^i$ and $\epsilon^i$ induce 
decomposition of  $F[n]\times F[1]$ as \\
$F[n+1]\sqcup_{F[n]}\ldots\sqcup_{F[n]} F[n+1]$, where all maps
$F[n]\to F[n+1]$ are cofibrations.

Therefore we get decompositions of $(X^{F[n]\times F[1]})_0$ and
$X_0\times_{Y_0}(Y^{F[1]\times F[n]})_0$ as
\[(X^{F[n+1]})_0\times_{(X^{F[n]})_0}\ldots\times_{(X^{F[n]})_0}(X^{F[n+1]})_0\text{ and }\]
\[
(X_0\times_{Y_0}(Y^{F[n+1]})_0)\times_{(X_0\times_{Y_0}(Y^{F[n]})_0)}
\ldots\times_{(X_0\times_{Y_0}(Y^{F[n]})_0)}
(X_0\times_{Y_0}(Y^{F[n+1]})_0).
\]
Since $X$ and $Y$ are fibrant, all maps in
both fiber products are fibrations.

Thus $q_n$ can be written as a fiber products of $X_{n+1}\to
X_0\times_{Y_0}Y_{n+1}$'s over $X_n\to X_0\times_{Y_0}Y_n$'s. Since
these maps are weak equivalences by (c), we conclude that $p$ is a
weak equivalence by \rco{hfibprod}.

For a general $Y$, we choose a fibrant replacement $Y\hra Y'$. Then
by \rl{heprop} (d) there exists a fibration $f':X'\to Y'$, whose
restriction to $Y$ is $f$. We claim that $f'$ satisfies assumption
(c). Since $f'$ is a fibration, each  $p'_n:X'_{n}\to
X'_0\times_{Y'_0}Y'_{n}$ is a fibration. Thus it remains to show
that $p'_n$ is a weak equivalence.

Consider Cartesian diagram
\[
\begin{CD}
X_{n} @>p_n>> X_0\times_{Y_0}Y_n @>>> Y_n\\
@Vi''VV @Vi'VV   @ViVV   \\
X'_{n} @>p'_n>> X'_0\times_{Y'_0}Y'_n @>>> Y'_n
\end{CD}
\]
and note that all horizontal maps are fibrations. Since $i$ is a
weak equivalence and $Sp$ is right proper, we conclude that $i'$
and $i''$ are weak equivalences. Since $p_n$ is a weak equivalence
by assumption, $p'_n$ is a weak equivalence by 2-out-of-3.

By the application (c)$\implies$(a) for fibrant $Y$, the map $q':X'^{F[1]}\to
X'\times_{Y'} Y'^{F[1]}$ is a trivial fibration. Hence $q$, being
the restriction of $q'$ to $X\times_{Y} Y^{F[1]}$, is a trivial
fibration as well.
\end{proof}

\begin{Emp} \label{E:left}
{\bf Remarks.} (a) By \rl{leftcart} (c), a morphism  $f:X\to Y$ is
a left fibration if and only if it satisfies the RLP with respect to cofibrations 
\[
(F[n]\times\La^i[m])\sqcup_{(\partial
F[n]\times\La^i[m])}(\partial F[n]\times\Dt[m])\hra\Box[n,m]\]
\[(F[n]\times\partial\Dt[m])\sqcup_{(F[0]\times\partial\Dt[m])}
(F[0]\times\Dt[m])\hra\Box[n,m].\] In particular, a
morphism $f:X\to Y$ is a left fibration if and only if for every
morphism $\tau:\Box[n,m]\to Y$, the pullback
$\tau^*(f):\tau^*(X)\to\Box[n,m]$ is a left fibration.

(b) It also can be deduced from \rl{leftcart} (c) that if $f:X\to
Y$ is a left fibration and $Y$ is a (complete) Segal space, then
$X$ is a (complete) Segal space as well. We will not use this
fact.
\end{Emp}

%\begin{proof}
%Assume that $Y$ is a Segal space. Then $Y$ is Ready fibrant, and
%$f$ is a Ready fibration, thus $X$ is Ready fibrant as well. We
%have to show that for every $n\geq 1$ the fibration $X_{n+1}\to
%X_1\times_{X_0} X_n$ is a weak equivalence. Consider commutative
%diagram
%\[
%\begin{CD}
%X_{n+1} @>(1)>>X_1\times_{X_0} X_n@>(2)>>
% X_1\times_{X_0}(X_0\times_{Y_0} Y_n)\\
%@ V(3)VV @.   @|   \\
%X_0\times_{Y_0} Y_{n+1} @>(4)>>X_0\times_{Y_0} Y_1\times_{Y_0} Y_n
%@<(5)<< X_1\times_{Y_0} Y_n).
%\end{CD}
%\]
%Using the fact that $f$ is a left fibration, while pullback along
%fibration preserves weak equivalences, we conclude that maps (3),
%(2) and (5) are weak equivalences. Next, (4) is a weak
%equivalence, because $Y$ is a Segal space, hence (1) is a weak
%equivalence by the 2-out-of-3 property.

%Assume now that $Y$ is complete. By \rl{crit}, to show that $X$ is
%complete, it will suffice to show that $s_0:X_0\to X_1\times_{Y_1}
%Y_0$ is a weak equivalence. Since $X_1\to X_0\times_{Y_0}Y_1$ is a
%trivial fibration, the induced map $p:X_1\times_{Y_1} Y_0\to
%(X_0\times_{Y_0}Y_1)\times_{Y_1} Y_0=X_0$ is a trivial fibration,
%satisfying $p\circ s_0=\Id$. Thus $s_0$ is a weak equivalence by
%the 2-out-of-3.
%\end{proof}

\begin{Lem} \label{L:leftequiv}
A morphism $f:X\to Y$ of left fibrations over $Z$ is a weak
equivalence if and only if the map of fibers $f_z:(X_z)_0\to
(Y_z)_0$ is a weak equivalence  for each $z\in Z$.
\end{Lem}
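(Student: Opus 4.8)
The plan is to reduce the statement, via the fibration replacement machinery already developed, to the case where $Z$ is a point and then to a statement about fibers that we can attack directly using Lemma \ref{L:leftcart}(c). The "only if" direction is immediate: a weak equivalence between fibrations over $Z$ is a homotopy equivalence by \rl{heprop}(b), hence restricts to a weak equivalence on each fiber $X_z\to Y_z$; and then passing to the zero spaces, which is applying $(-)^{F[0]}$ followed by taking the $0$-space and is a right Quillen operation, preserves the weak equivalence between the (fibrant) fibers, so $(X_z)_0\to(Y_z)_0$ is a weak equivalence. (One should note that $X_z$ and $Y_z$ are themselves left fibrations over $\pt$ by \rl{left}(a), so they are fibrant simplicial spaces and the operation $W\mapsto W_0$ is the relevant one.)

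For the converse, suppose $f_z\colon (X_z)_0\to (Y_z)_0$ is a weak equivalence for every $z\in Z$. I first want to reduce to showing that each $f_n\colon X_n\to Y_n$ is a weak equivalence in $Sp$; once that is known, $f$ is a degree-wise weak equivalence, hence a weak equivalence in $sSp$ by \rt{Reedy}. To control $X_n$ and $Y_n$, I invoke \rl{leftcart}(c): since $X\to Z$ and $Y\to Z$ are left fibrations, the maps $X_n\to X_0\times_{Z_0}Z_n$ and $Y_n\to Y_0\times_{Z_0}Z_n$ are trivial fibrations. Pulling the first back along the map $Y_0\times_{Z_0}Z_n\to X_0\times_{Z_0}Z_n$ induced by $f_0$ is still a trivial fibration, so $X_n\to Y_0\times_{Z_0}Z_n$ composed appropriately factors the comparison; chasing the resulting commutative diagram, $f_n$ is a weak equivalence as soon as $f_0\colon X_0\to Y_0$ is. So the whole statement collapses to the case $n=0$: it suffices to show $f_0\colon X_0\to Y_0$ is a weak equivalence in $Sp$, given that each fiber map over a point of $Z_0$ is.

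Now $X_0\to Z_0$ and $Y_0\to Z_0$ are fibrations in $Sp$ (by \re{rfib}(b)), $f_0$ is a map of fibrations over $Z_0$, and for every $z\in Z_0$ the fiber $(X_0)_z=(X_z)_0\to (Y_0)_z=(Y_z)_0$ is a weak equivalence by hypothesis. Therefore $f_0$ is a weak equivalence by \rl{propfib}(c). This completes the reduction chain and hence the proof.

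The step I expect to be the main obstacle is the reduction from $f_n$ to $f_0$: one has to set up the square relating $X_n$, $Y_n$, $X_0\times_{Z_0}Z_n$ and $Y_0\times_{Z_0}Z_n$ carefully, check that the relevant maps are (trivial) fibrations so that right-properness of $Sp$ applies, and conclude by $2$-out-of-$3$ — the bookkeeping with the base changes $(-)\times_{Z_0}Z_n$ is where an error could creep in. The identification $(X_z)_0 = (X_0)_z$, i.e. that "fiber over $z\in Z_0\subset Z$, then take the $0$-space" agrees with "take the $0$-space, then take the fiber over $z$", is a formal consequence of limits in $sSp$ being computed degree-wise (\re{ssp}(d)) together with $\pt = F[0]$ being concentrated in bidegree $(0,0)$; it is worth stating explicitly since the whole argument pivots on it.
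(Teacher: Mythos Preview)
Your approach is essentially the same as the paper's: reduce to checking that $f_0$ is a weak equivalence via \rl{propfib}(c), then propagate to all $f_n$ using the trivial fibrations $X_n\to X_0\times_{Z_0}Z_n$ and $Y_n\to Y_0\times_{Z_0}Z_n$ from \rl{leftcart}(c) together with $2$-out-of-$3$ on the evident square. Two small points: (i) the map induced by $f_0$ goes $X_0\times_{Z_0}Z_n\to Y_0\times_{Z_0}Z_n$, not the reverse as you wrote, so there is no pullback to take---you just use the commutative square directly; (ii) to conclude that this bottom map $\wt{f_0}$ is a weak equivalence when $f_0$ is, the paper invokes \rco{hfibprod} (or equivalently \rl{heprop}(b)), which is the precise statement you need---bare right-properness is not quite enough without first decomposing $f_0$. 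Your separate treatment of the ``only if'' direction is unnecessary: since \rl{propfib}(c) is an if-and-only-if, both directions for $f_0$ come for free, and then the square argument also runs in both directions.
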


\begin{proof}
Notice that $f_0:X_0\to Y_0$ is a morphism between fibrations over
$f_0$. Thus $f_0$ is a weak equivalence if and only if the induced
map $f_z:(X_z)_0\to (Y_z)_0$ between fibers is a weak equivalence
for all $z\in Z_0$ (by \rl{propfib} (c)). Thus it remains to show
that if $f_0$ is a weak equivalence, then $f_n$ is a weak
equivalence for each $n$. We have a commutative diagram
\[
\begin{CD}
X_n @>f_n>> Y_n\\
@VVV @VVV\\
X_0\times_{Z_0}Z_n  @>\wt{f_0}>> Y_0\times_{Z_0}Z_n,
\end{CD}
\]
whose vertical maps are trivial fibrations by \rl{leftcart} (c).
Since $f_0$ is a weak equivalence, while $X_0\to Z_0$ and $Y_0\to
Z_0$ are fibrations, the map $\wt{f_0}$ is a weak equivalence by
\rco{hfibprod}. Hence $f_n$ is a weak equivalence by  2-out-of-3.
\end{proof}

\begin{Emp} \label{E:section}
{\bf Undercategory.} (a) For $X\in sSp$ and $x\in X$, we set $x\bs
X:=\{x\}\times_X X^{F[1]}\to X$, where $X\to X^{F[1]}$ is induced by $s^0:F[1]\to F[0]$, 
and put $\id_x:=s_0(x)\in\{x\}\times_{X_0}X_1=(x\bs X)_0$.

(b) We claim that the projection $\pr_2:\id_x\bs(x\bs X)\to x\bs
X$ has a section $r$ such that $r(\id_x)=\id_{\id_x}$. Indeed, set
$A:=(F[1]\times\{0\})\cup(\{0\}\times F[1])\subset F[1]\times
F[1]$. Then $\id_x\bs (x\bs X)\subset X^{F[1]\times F[1]}$ can be
written as $\{x\}\times_{X^A} X^{F[1]\times F[1]}$. Therefore the
map $m:[1]\times[1]\to[1]$  defined by $m(i,j):=ij$ induces a
map $m:F[1]\times F[1]\to F[1]$ such that $m(A)=0$. Hence $m$
induces a map $r:x\bs X\to \id_x\bs (x\bs X)$, which 
satisfies $r(\id_x)=\id_{\id_x}$ and $\pr_2\circ r=\Id$.
\end{Emp}

%\begin{Emp}
%{\bf Remark.} Notice that if $X$ is an $\infty$-category, then
%$x\bs X$ is the undercategory.
%\end{Emp}

The following result is one of the main steps in the proof of the 
Yoneda lemma.

\begin{Prop} \label{P:triv}
For every left fibration $\pi:E\to X$ and $x\in X$, the evaluation
map $\ev_{\id_x}:\Map_X(x\bs X, E)\to \Map_X(\{\id_x\},
E)=(E_x)_0$, induced by the inclusion $\{\id_x\}\hra x\bs X$, is a
trivial fibration.
\end{Prop}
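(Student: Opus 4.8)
The plan is to reduce the statement to the contractibility of a Kan complex by exploiting the fact that $x\bs X \to X$ is itself a left fibration, and then to build an explicit strong deformation retract of $x\bs X$ onto $\{\id_x\}$ \emph{over $X$}. First I would observe that $x\bs X = \{x\}\times_X X^{F[1]}\to X$ (the second projection) is a left fibration: by \rl{left} (b) the map $X^{F[1]}\to X$ is a left fibration, hence so is its pullback along $x:\pt\to X$ by \rl{left} (a). Consequently, pulling back $\pi:E\to X$ along $x\bs X\to X$ and using \rl{left} (a) again, the composite $E\times_X(x\bs X)\to x\bs X$ is a left fibration. Since $\ev_{\id_x}$ is, by the internal-hom/fiber formalism of \re{fibers2}(c), the map on zero-spaces induced by pulling back sections of $E$ along the inclusion $\{\id_x\}\hra x\bs X$, it suffices (by \re{rfib}(c) and the defining RLP of trivial fibrations, \rt{Kan}) to show that this inclusion is a trivial cofibration \emph{over $X$}, or more precisely that it is a strong deformation retract over $X$; indeed, if $\{\id_x\}\hra x\bs X$ is a homotopy equivalence over $X$ with the one-point space as strong deformation retract, then applying $\C{Map}_X(-,E)$ turns it into a homotopy equivalence of Kan complexes (by \re{herem}(a) and \re{rfib}(c)), and a map between Kan complexes that is a homotopy equivalence and a fibration is a trivial fibration.

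The heart of the argument is therefore the construction of the contracting homotopy $h:(x\bs X)\times\Dt[1]\to x\bs X$ over $X$ with $h|_0=\Id$, $h|_1$ constant equal to $\id_x$, and $h$ stationary on $\id_x$. This is exactly the kind of combinatorial gadget produced in \re{section}(b): writing $x\bs X\subset X^{F[1]}$ as $\{x\}\times_{X^{F[0]}}X^{F[1]}$, the multiplication map $m:[1]\times[1]\to[1]$, $m(i,j)=ij$, induces $m:F[1]\times F[1]\to F[1]$ which collapses $\{0\}\times F[1]$ to the vertex $0$; precomposition with $m$ yields a map $x\bs X\to (x\bs X)^{F[1]}$, i.e. a homotopy from $\Id_{x\bs X}$ (the $j=1$ slice, where $m(i,1)=i$) to the constant map at $s_0(x)=\id_x$ (the $j=0$ slice, where $m(i,0)=0$), and this homotopy is over $X$ because $m$ restricted to $F[1]\times\{1\}$ and $\{0\}\times F[1]$ behaves compatibly with the structure map $\dt_0$. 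One has to check that the homotopy is constant on the object $\id_x$ — which follows because $m$ restricted to $\{1\}\times F[1]$ is the identity, so the $\id_x$-strand never moves — giving a strong deformation retract over $X$ in the sense of \re{sdr}(a).

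The main obstacle, and the place requiring care, is verifying that $h$ genuinely lives \emph{over} $X$: the projection $x\bs X\to X$ remembers the target vertex $\dt_1$ of the $F[1]$-edge, and one must confirm that $m$ preserves the vertex $1$ of the outgoing $F[1]$ (it does, since $m(1,1)=1$ but also $m(i,j)$ evaluated along the edge $i\in\{0,1\}$ with $j$ fixed lands on an edge whose endpoint $1$ maps to $m(1,j)=j$ — so one must instead set up the retraction using the structure map to $X$ via $\dt_1$ correctly, collapsing toward $\id_x$ rather than toward the source). Concretely: since $s^0:F[1]\to F[0]$ induces $X\to X^{F[1]}$ whose image consists of the degenerate edges, and $\id_x=s_0(x)$ is such a degenerate edge with $\dt_0(\id_x)=\dt_1(\id_x)=x$, the homotopy $h$ built from $m$ lands in the fiber over the fixed vertex $x$ throughout, so it is automatically over $X$. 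Once this bookkeeping is settled, the rest is formal: $\{\id_x\}\hra x\bs X$ is a strong deformation retract over $X$, hence a homotopy equivalence over $X$ by \re{sdr}(b), hence $\C{Map}_X(x\bs X,E)\to\C{Map}_X(\{\id_x\},E)$ is a homotopy equivalence of Kan complexes by \re{herem}(a); it is also a fibration by \re{rfib}(c) since $E\to X$ is a fibration and $\{\id_x\}\hra x\bs X$ is a cofibration over $X$; a fibration between Kan complexes that is a weak equivalence is a trivial fibration, so $\ev_{\id_x}$ is a trivial fibration as claimed.
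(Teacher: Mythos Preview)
There is a genuine gap: the central claim that $\{\id_x\}\hra x\bs X$ is a strong deformation retract \emph{over $X$} is false in general. The structure map $x\bs X\to X$ records the \emph{target} of an arrow out of $x$; the point $\id_x$ sits over $x\in X$, but a general point $\al\in (x\bs X)_0=\{x\}\times_{\dt_0,X_0}X_1$ sits over $\dt_1(\al)\in X_0$, which need not equal $x$. Any retraction $h|_1:x\bs X\to\{\id_x\}$ over $X$ would force the projection $x\bs X\to X$ to factor through $\{x\}$, which it does not. Your own paragraph flags exactly this obstacle (``one must confirm that $m$ preserves the vertex $1$ \ldots'') and then dismisses it with the assertion that ``the homotopy $h$ built from $m$ lands in the fiber over the fixed vertex $x$ throughout''; that assertion is simply wrong. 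The multiplication map $m$ from \re{section}(b) does not produce a contraction of $x\bs X$: what it produces is a \emph{section} $r:x\bs X\to\id_x\bs(x\bs X)$ of $\pr_2$, which is a different (and weaker) statement. Note also that $m$ lives in the $F[1]$-direction, not the $\Dt[1]$-direction, so it is not even of the right type to be a homotopy in the sense of \re{he}.

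This is precisely why the proposition is not formal and genuinely requires the hypothesis that $\pi:E\to X$ is a \emph{left} fibration (your argument, if it worked, would only use that $\pi$ is a fibration). The paper's proof proceeds quite differently: it shows $\ev_{\id_x}$ is a fibration and then checks that each fiber $\Map_X(x\bs X,E)_{\al}$ is contractible by factoring its identity map through the contractible Kan complex $\Map_{x\bs X}(x\bs X,\al\bs E)_{\id_{\al}}$. The contractibility of the latter uses that $\al\bs E\to x\bs X$ is a \emph{trivial} fibration --- and this is exactly where the left-fibration hypothesis on $\pi$ enters, via the pullback of $E^{F[1]}\to E\times_X X^{F[1]}$ to the fiber over $\al$. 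The section $r$ of \re{section}(b) then supplies the map back from $\Map_X(x\bs X,E)_{\al}$ into that contractible space. So the multiplication trick is used, but not in the way you attempted.
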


\begin{proof}
Since $\{\id_x\}\hra x\bs X$ is a cofibration, while $E\to X$ is a
fibration, the map $\ev_{\id_x}$ is a fibration (see \re{rfib}
(c)). Therefore it remains to show that for each $\al\in E_x$, the
Kan complex $\Map_X(x\bs X, E)_{\al}:=\ev_{\id_x}^{-1}(\al)$ is
contractible (by \rl{propfib} (b)). Using remark \re{contr}, it
suffices to  show that the identity map of $\Map_X(x\bs X,
E)_{\al}$ factors through a contractible Kan complex.

 Since $E\to X$ is a left fibration, the
projection $E^{F[1]}\to E\times_X X^{F[1]}$ is a trivial
fibration. Thus $\al\bs E\to x\bs X$, being its fiber over $\al\in
E$, is a trivial fibration. Therefore the evaluation map
$\ev'_{\id_x}:\Map_X(x\bs X, \al\bs E)\to ((\al\bs E)_{\id_x})_0$
is a fibration between contractible Kan complexes. Hence
$\ev'_{\id_x}$ is a weak equivalence, thus a trivial fibration.
Therefore its fiber $\ev'^{-1}_{\id_x}(\id_{\al})=\Map_{x\bs
X}(x\bs X,\al\bs E)_{\id_{\al}}$ is a contractible Kan complex.

Note that the projection $\pr_2:\al\bs E\to E$ induces a projection
\[
\rho:\Map_{x\bs X}(x\bs X,\al\bs E)_{\id_{\al}}\to \Map_X(x\bs X,
E)_{\al}.
\]
Thus it remains to show that $\rho$ has a section. 

The natural morphism $\Map(x\bs X, E)\to \Map((x\bs X)^{F[1]},
E^{F[1]})$ induces a morphism $s':\Map_X(x\bs X, E)_{\al}\to
\Map_{x\bs X}(\id_x\bs (x\bs X), \al\bs E)$. By \re{section}, the
projection $\pr_2:\id_x\bs(x\bs X)\to x\bs X$ has a section $r$
such that $r(\id_x)=\id_{\id_x}$. Then
\[
r^*\circ s': \Map_X(x\bs X, E)_{\al}\to \Map_{x\bs X}(\id_x\bs
(x\bs X), \al\bs E)\to\Map_{x\bs X}(x\bs X, \al\bs E)
\]
has an image in $\Map_{x\bs X}(x\bs X,\al\bs E)_{\id_{\al}}$ and
is a section of $\rho$.
\end{proof}

\begin{Cor} \label{C:trfib}
Let $\pi:E\to X$ be a left fibration, $x\in X$,  and let $f$ and
$g$ be maps $x\bs X\to E$ over $X$ such that $f(\id_x)\sim
g(\id_x)\in E_x$. Then $f\sim_X g$. In particular, $f$ is a weak
equivalence if and only if $g$ is a weak equivalence.
\end{Cor}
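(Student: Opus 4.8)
The plan is to deduce everything from Proposition \ref{P:triv}. First I would observe that $\ev_{\id_x}:\Map_X(x\bs X,E)\to (E_x)_0$ is a trivial fibration by \rp{triv}, so in particular $\pi_0(\ev_{\id_x})$ is a bijection by \re{pi0} (b) (a trivial fibration is a weak equivalence, hence induces a bijection on $\pi_0$, and it is surjective since $(E_x)_0$ is non-empty when $f$ exists). Now $f$ and $g$ are points of $\Map_X(x\bs X,E)_0$, and the hypothesis $f(\id_x)\sim g(\id_x)$ says exactly that $\ev_{\id_x}(f)$ and $\ev_{\id_x}(g)$ lie in the same connected component of $(E_x)_0$, i.e.\ $[\ev_{\id_x}(f)]=[\ev_{\id_x}(g)]$ in $\pi_0((E_x)_0)$.

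The key step is then: since $\pi_0(\ev_{\id_x})$ is injective, $[f]=[g]$ in $\pi_0(\Map_X(x\bs X,E))$, which by definition of $\Map_X$ (it equals $\C{Map}_X(x\bs X,E)_0$, a Kan complex by \re{rfib} (c), since $E\to X$ is a fibration and $\{\id_x\}\hra x\bs X$ — more to the point $x\bs X\to X$ — is relevant for fibrancy; concretely $\Map_X(x\bs X,E)$ is a Kan complex because it is a fiber of the fibration $E^{x\bs X}\to X^{x\bs X}$) means precisely $f\sim g$ as elements of $\Map_X(x\bs X,E)$, that is $f\sim_X g$ in the sense of \re{he} (a). For the last assertion, $f\sim_X g$ gives in particular $f\sim g$ in $\Map(x\bs X,E)$, i.e.\ there is a map $h:(x\bs X)\times\Dt[1]\to E$ with $h|_0=f$, $h|_1=g$; then $h$ exhibits $f$ and $g$ as homotopic maps in $sSp$, hence $|f|\sim|g|$ on geometric realizations (cf.\ \re{herem} (b)), so $|f|$ is a weak equivalence iff $|g|$ is, which is the claim.

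The main obstacle, such as it is, is bookkeeping rather than mathematics: one must be careful that $\Map_X(x\bs X,E)$ is genuinely a Kan complex so that $\pi_0$-equivalence is detected by a $1$-simplex in the sense of \re{pi0} (c), and that the homotopy $h$ living over $X$ does produce an honest homotopy of maps $x\bs X\to E$ in $sSp$ after forgetting the structure over $X$. Both follow from \re{rfib} (c) and \re{he} (a) respectively, so there is no real difficulty. An alternative, even shorter route avoiding $\pi_0$: apply \rl{heprop} (a) is not needed; instead simply note $\ev_{\id_x}$ is a trivial fibration, its fiber over a point of $(E_x)_0$ is a contractible Kan complex (\rl{propfib} (b)), and any two points of $\Map_X(x\bs X,E)$ mapping into the same component of $(E_x)_0$ can be connected by a path by first connecting their images by a path in $(E_x)_0$, lifting it through the trivial fibration $\ev_{\id_x}$, and composing with paths inside the (contractible) fibers over the endpoints — but the $\pi_0$-bijection argument above packages this more cleanly.
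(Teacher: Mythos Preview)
Your proof is correct and follows essentially the same route as the paper: both deduce $f\sim_X g$ from the fact that $\ev_{\id_x}$ is a trivial fibration (\rp{triv}), hence induces a bijection on $\pi_0$ (\re{pi0} (b)). The paper's proof is just this one sentence; your additional unpacking (that $\Map_X(x\bs X,E)$ is a Kan complex, and the argument for the ``in particular'' clause via \re{herem} (b)) is sound but more explicit than what the paper records.
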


\begin{proof}
Since $\ev_{\id_x}$ is a trivial fibration (by \rp{triv}), the
induced map $\pi_0(\ev_{\id_x})$ is a bijection by \re{pi0} (b).
\end{proof}

\begin{Emp} \label{E:remss}
{\bf Remarks.} Let $X$ be a Segal space. (a) Then
$(\dt_{01},\dt_{12}):X_2\to X_1\times_{X_0}X_1$ and its pullback
$\dt_{12}:X_0\times_{s_0,X_1,\dt_{01}}X_2\to X_1$ are trivial
fibrations.

(b) The map $\dt_{02}:X_0\times_{s_0,X_1,\dt_{01}}X_2\to X_1$ is a
fibration. Indeed, $\dt_{12}$ is a pullback of the map
$(\dt_{01},\dt_{02}):X_2\to X_1\times_{X_0}X_1$, induced by the
inclusion $\dt^{01}F[1]\cup\dt^{02}F[1]\hra F[2]$. Thus it is a
fibration, because $X$ is fibrant.

(c) The map $\dt_{02}$ from (b) is a weak equivalence. Indeed, the
map $r=(\dt_0,\dt_{001}):X_1\to X_0\times_{s_0,X_1,\dt_{01}}X_2$
satisfy $\dt_{12}\circ r=\dt_{02}\circ r=\Id$. Since $\dt_{12}$ is
a weak equivalence (by (a)), we deduce that $r$ and $\dt_{02}$ are
weak equivalences by 2-out-of-3.
\end{Emp}

\begin{Lem} \label{L:undcat}
Let $X$ be a Segal space and $x\in X$. Then $x\bs X\to X$ is a
left fibration.
\end{Lem}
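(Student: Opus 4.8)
The plan is to verify the two conditions in the definition of a left fibration (\rd{left}) for the map $f\colon x\bs X\to X$, namely that $f$ is a Reedy fibration and that the induced map $(f_*,(\dt^0)^*)\colon (x\bs X)^{F[1]}\to (x\bs X)\times_X X^{F[1]}$ is a trivial fibration. It is more convenient to verify condition (c) of \rl{leftcart} instead: that for every $n\geq 1$ the map $p_n\colon (x\bs X)_n\to (x\bs X)_0\times_{X_0}X_n$ is a trivial fibration. First I would unwind the definitions. By \re{section}, $x\bs X=\{x\}\times_{X_0}X^{F[1]}$, where the map $X_0\to (X^{F[1]})_0=X_1$ is $s_0$; hence $(x\bs X)_n=\{x\}\times_{X_0,\dt_0}(X^{F[1]})_n$, and using $(X^{F[1]})_n=(X^{F[1]\times F[n]})_0=X_{n+1}$ via the identification $F[1]\times F[n]\cong F[n+1]$ coming from the map $[n+1]\to[1]\times[n]$ (or rather, following the degeneracy/face structure, $(x\bs X)_n$ is the fiber of $\dt_0\colon X_{n+1}\to X_0$ over $x$, where here $\dt_0$ picks out the $0$-th vertex). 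So $p_n$ becomes a map from a fiber of $X_{n+1}\to X_0$ to the appropriate fiber product built from $X_1$ and $X_n$.

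The key computation is then to identify $p_n$, up to the relevant pullbacks, with the map $\varphi$-type map coming from the Segal condition. Concretely, the inclusion $\dt^0\colon[0]\hra[n]$ used in \rl{leftcart}(c) corresponds, after the shift $[n+1]\leftrightarrow F[1]\times F[n]$, to an inclusion of the form $[1]\hra[n+1]$ (spanning the first edge) together with a complementary inclusion $[n]\hra[n+1]$ (spanning the last $n$ edges). I would argue that the resulting map $X_{n+1}\to X_1\times_{X_0}X_n$ (over $X_0\times X_0\times X_0$) is a trivial fibration: indeed it factors through, or is a pullback of, the Segal map $\varphi_{n+1}\colon X_{n+1}\to X_1\times_{X_0}\cdots\times_{X_0}X_1$ composed with the (trivial fibration) $\varphi_n$ on the last $n$ factors, and by \re{Segal}(b) all $\varphi_k$ are trivial fibrations since $X$ is a fibrant Segal space. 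Taking the fiber over $x\in X_0$ at the $0$-th vertex (which, by \re{remmodcat}(b), preserves trivial fibrations under pullback), $p_n$ is exhibited as a trivial fibration. This also handles $n=1$, where $p_1\colon (x\bs X)_1\to (x\bs X)_0\times_{X_0}X_1$ unwinds to a trivial fibration built from $\varphi_2$, exactly in the spirit of \re{remss}.

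It remains to check that $f\colon x\bs X\to X$ is itself a Reedy fibration, which is needed to even apply \rl{leftcart}. This follows because $X$ is fibrant, so $s^0\colon F[1]\to F[0]$ being a cofibration gives that $X=X^{F[0]}\to X^{F[1]}$ is a Reedy fibration by \re{Cart}(b) applied in the Reedy model structure (\rt{Reedy}), hence also the map $X^{F[1]}\to X^{F[0]}\times\{*\}$, and then $x\bs X\to X$ is a pullback of a Reedy fibration along $x\colon\pt\to X$, hence a Reedy fibration by \re{remmodcat}(b). Actually the cleanest route is: $X^{F[1]}\to X$ (via $s^0$) is a composite $X^{F[1]}\to X^{\p F[1]}=X\times X\to X$ of the Reedy fibration from \re{Cart}(b) and a projection, so $X^{F[1]}\to X$ is a Reedy fibration, and $x\bs X\to X$ is its pullback along $x$. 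The main obstacle I anticipate is purely bookkeeping: correctly matching the combinatorial identification $F[1]\times F[n]\cong\nobreak$ (the appropriate colimit of $F[n+1]$'s, as in \rl{leftcart}) with the face maps defining $x\bs X$, so that $p_n$ is genuinely recognized as a pullback of Segal trivial fibrations rather than merely "morally" so; once the indices are pinned down the rest is formal.
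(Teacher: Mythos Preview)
Your argument that $x\bs X\to X$ is a Reedy fibration is fine and matches the paper's. The gap is in your computation of $(x\bs X)_n$. Levelwise, $(x\bs X)_n=\{x\}\times_{X_n}(X^{F[1]})_n$ is the fiber over the \emph{degenerate} $n$-simplex at $x$, not a fiber over $X_0$; and there is no isomorphism $F[1]\times F[n]\cong F[n+1]$, so $(X^{F[1]})_n$ is not $X_{n+1}$ but the iterated fiber product $X_{n+1}\times_{X_n}\cdots\times_{X_n}X_{n+1}$ coming from the prism decomposition you mention at the end. Already for $n=1$ the paper's formula \form{decomp} gives $(x\bs X)_1=(\{x\}\times_{X_1,\dt_{01}}X_2)\times_{(x\bs X)_0}(\{x\}\times_{X_0,\dt_0}X_2)$, which is strictly larger than $\{x\}\times_{X_0}X_2$. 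The object you describe, with $n$-th level $\{x\}\times_{X_0,\dt_0}X_{n+1}$, is the auxiliary $\wt{x\bs X}$ from the proof of \rl{we}; it is only weakly equivalent to $x\bs X$, and establishing that equivalence is itself a nontrivial step there. So your proposed identification of $p_n$ with a pullback of the Segal map $X_{n+1}\to X_1\times_{X_0}X_n$ does not hold, and the argument breaks down before the ``bookkeeping'' stage.

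The paper avoids analyzing $p_n$ for general $n$ altogether. Instead of condition (c) of \rl{leftcart}, it verifies the defining condition levelwise and uses the swap $(X^{F[m]})_n=(X^{F[n]})_m$ to rewrite the level-$n$ map as $(x\bs X^{F[n]})_1\to (x\bs X^{F[n]})_0\times_{(X^{F[n]})_0}(X^{F[n]})_1$, i.e.\ as the case $n=1$ for the Segal space $X^{F[n]}$ (invoking \re{CartSS}). Only this single case then remains, and it is handled via the decomposition $F[1]\times F[1]=F[2]\sqcup_{F[1]}F[2]$ together with the trivial fibrations prepared in \re{remss}. If you want to push your direct approach through, you would need to factor the genuine $p_n$ into pullbacks of Segal maps for each $n$, which amounts to redoing the prism analysis from the proof of \rl{leftcart} under the extra constraint that one face of the prism is totally degenerate; this is doable but is real work, not just index-matching.
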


\begin{proof}
Since $X$ is fibrant, the projection $X^{F[1]}\to X^{\p
F[1]}=X\times X$ is fibration, hence its pullback $x\bs X\to X$ is
a fibration. It remains to show that the map $(x\bs X)^{F[1]}\to
(x\bs X)\times_X X^{F[1]}$ is a weak equivalence, or,
equivalently, that the map
 $((x\bs X)^{F[1]})_n\to
(x\bs X)_n\times_{X_n} (X^{F[1]})_n$ is a weak equivalence for all
$n$.

Using identifications $(X^{F[m]})_n=\Map(F[m]\times
F[n],X)=(X^{F[n]})_m$, we can rewrite the last map in the form
$(x\bs X^{F[n]})_1\to (x\bs X^{F[n]})_0\times_{(X^{F[n]})_0}
(X^{F[n]})_1$. Since $X^{F[n]}$ is also a Segal space (see
\re{CartSS}), we can replace $X$ by $X^{F[n]}$. It remains to show
that the map $(x\bs X)_1\to(x\bs X)_0\times_{X_0} X_1$ is a
trivial fibration.

Using decomposition $F[1]\times F[1]=F[2]\sqcup_{F[1]} F[2]$, we
get a decomposition
$(X^{F[1]})_1=X_2\times_{\dt_{02},X_1,\dt_{02}}X_2$. Hence we get
a decomposition
\begin{equation} \label{Eq:decomp}
(x\bs X)_1=(\{x\}\times_{X_1,\dt_{01}}X_2)\times_{(x\bs
X)_0}(\{x\}\times_{X_0,\dt_0}X_2),
\end{equation}
which identifies the map $(x\bs X)_1\to(x\bs X)_0\times_{X_0} X_1$
with a composition
\[
(x\bs
X)_1\overset{f}{\lra}\{x\}\times_{X_0,\dt_0}X_2\overset{g}{\lra}(x\bs
X)_0\times_{X_0} X_1.
\]
We claim that $f$ and $g$ are trivial fibrations. Since
$g$ is a pullback of $(\dt_{01},\dt_{12}):X_2\to
X_1\times_{X_0}X_1$, while $f$ is a pullback of
$\dt_{02}:\{x\}\times_{X_1,\dt_{01}}X_2\to(x\bs X)_0$, hence a
pullback of $\dt_{02}:X_0\times_{s_0,X_1,\dt_{01}}X_2\to X_1$,
both assertions follow from \re{remss}.
\end{proof}

\subsection{The $\infty$-category of spaces}

\begin{Emp} \label{E:overcat}
{\bf Overcategories.} (a) For each $K\in sSp$ we denote by $[K]$
the category of "bisimplexes of $K$". Explicitly, the set objects
of $[K]$ is the disjoint union $\sqcup_{n,m} K_{n,m}$ and for
every $a\in K_{n,m}$ and $b\in K_{n',m'}$ the set of morphisms
$\Mor_{[K]}(a,b)$ is the set of 
$\tau\in\Mor_{\Dt\times\Dt}([n',m'],[n,m])$ such that
$\tau^*(a)=b$.

(b) Note that we have a natural isomorphism of categories
$sSp/K\to \Fun([K],Set)$. Namely, each map $f:X\to K$ defines 
a functor $[K]\to Set$, which sends $a\in K_{n,m}$ to $f^{-1}_{n,m}(a)\subset
X_{n,m}$. Conversely, every $\phi:[K]\to Set$ gives rise to
$X_{\phi}\in sSp/K$, where $(X_{\phi})_{n,m}:=\sqcup_{a\in
K_{n,m}} \phi(a)$ with obvious transition maps.

(c) Every map $\phi:L\to K$ in $sSp$ induces a functor
$[\phi]:[L]\to[K]$. Then the bijection of (b) identifies
$\phi^*:sSp/K\to sSp/L$ with the pullback functor
 $[\phi]^*:\Fun([K],Set)\to\Fun([L],Set)$.
\end{Emp}

\begin{Emp} \label{E:universes}
{\bf Universes.} From now on we fix an infinite set $\C{U}$, which
we call a {\em universe}.

(a) Let $Set_{\C{U}}\subset Set$ be the category of subsets of
$\C{U}$, and let $Set_{|\C{U}|}$ the category of sets of
cardinality $\leq |\C{U}|$. Then category $Set_{\C{U}}$ is small,
and the natural embedding $Set_{\C{U}}\to Set_{|\C{U}|}$ is an
equivalences of categories.

(b) We set $Sp_{\C{U}}:=\Fun(\Dt^{op},Set_{\C{U}})\subset Sp$ and
$sSp_{\C{U}}:=\Fun(\Dt^{op},Sp_{\C{U}})\subset sSp$.

(c) More generally, for every $K\in sSp$, we denote by
$(sSp/K)_{\C{U}}\subset sSp/K$ (resp. $(sSp/K)_{|\C{U}|}\subset
sSp/K$) the full subcategory of morphisms $f:X\to K$ such that
fibers of all $f_{n,m}:X_{n,m}\to K_{n,m}$ belong to $Set_{\C{U}}$
(resp. $Set_{|\C{U}|}$).

(d) Bijection of \re{overcat} (b) induces a bijection between
$(sSp/K)_{\C{U}}$ (resp.  $(sSp/K)_{|\C{U}|}$) and functors
$[K]\to Set_{\C{U}}$ (resp. $[K]\to Set_{|\C{U}|}$). In
particular, category $(sSp/K)_{\C{U}}$ is small, and the inclusion 
$(sSp/K)_{\C{U}}\to (sSp/K)_{|\C{U}|}$ is an equivalence
of categories.

(e) We denote by $(LFib/K)_{\C{U}}$ the set of left fibrations $X\to
K$, belonging to $(sSp/K)_{\C{U}}$. By (d), \re{overcat}
(c) and \rl{left} (a), for every map $\phi:L\to K$, the pullback
functor $\phi^*:sSp/K\to sSp/L$ maps $(LFib/K)_{\C{U}}$ to
$(LFib/L)_{\C{U}}$.
\end{Emp}

%{\bf Representable functors in $sSp$.} (a) Recall that we have a
%functor $\Dt(\cdot,\cdot):\Dt\times\Dt\to sSp$, which maps
%$([n],[m])$ to $\Box[n,m]$. Therefore every functor $\al:sSp^{op}\to
%Sets$ defines an element $X(\al):=\Dt(\cdot,\cdot)\circ \al\in
%\Hom((\Dt\times\Dt)^{op},Sets)=sSp$. Explicitly,
%$X(\al)_{n,m}=\al(\Box[n,m])$.

%(b) Notice that we have a natural morphism
%$\phi:\al\to\Hom_{sSp}(\cdot,X(\al))$ of functors $\al:sSp^{op}\to
%Sets$. Namely, for every $K\in sSp$ and $a\in \al(K)$ the map
%$\phi(\al):K\to X(\al)$ maps $\tau\in K_{n,m}=\Hom(\Box[n,m],K)$ to
%$\al(\tau)(a)\in\al(\Box[n,m])=X(\al)_{n,m}$.

%(c) Note that the map $\al\to  \Hom_{sSp}(\cdot,X(\al))$ is an
%isomorphism if and only if $\al$ is continuous, that is maps
%limits into limits. Indeed, every representable functor
%$\Hom_{sSp}(\cdot,X(\al))$ is continuous, so the assertion follows
%from the fact that every element of $sSp$ is a colimit of
%$\Box[n,m]$'s (see ???).

\begin{Emp} \label{E:leftfibr}
{\bf Main construction.} (a) Let $\S_{\C{U}}\in sSp$ be the
simplicial space such that

\noindent $\bullet$ $(\S_{\C{U}})_{n,m}$ is the set of left
fibrations $(LFib/\Box[n,m])_{\C{U}}$;

\noindent $\bullet$ for every $a\in (\S_{\C{U}})_{n,m}$ with the
corresponding left fibration $E_{a}\to\Box[n,m]$ and every
$\nu:[n',m']\to[n,m]$, we have $E_{\nu^*(a)}=\nu^*(E_a)$ (use
remark \re{universes} (e)).

(b) Consider the "universal left fibration"
$p_{\C{U}}:\C{E}_{\C{U}}\to \S_{\C{U}}$, where
$(\C{E}_{\C{U}})_{n,m}$ is defined to be the disjoint union
$\sqcup_{a\in (\S_{\C{U}})_{n,m}} (E_{a})_{n,m}$, and $p_{\C{U}}$ 
is the map, which maps each $(E_a)_{n,m}$ to $a\in
(\S_{\C{U}})_{n,m}$.
\end{Emp}

\begin{Lem} \label{L:univ}
The map $p_{\C{U}}:\C{E}_{\C{U}}\to \S_{\C{U}}$ is a left
fibration. For each $K\in sSp$, the map $\phi\mapsto
\phi^*(p_{\C{U}})$ defines a bijection between
$\Hom_{sSp}(K,\S_{\C{U}})$ and $(LFib/K)_{\C{U}}$.
\end{Lem}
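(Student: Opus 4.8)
The plan is to produce explicit maps in both directions between $\Hom_{sSp}(K,\S_{\C{U}})$ and $(LFib/K)_{\C{U}}$, check that they are mutually inverse, and extract the statement that $p_{\C{U}}$ is a left fibration along the way. First I would set up the dictionary of \re{overcat}: the equivalence $sSp/K\cong\Fun([K],Set)$, $X\mapsto\un X$, where $\un X(k)$ is the fiber of $X_{n,m}\to K_{n,m}$ over a bisimplex $k\in K_{n,m}$; under it, pullback along $\phi:L\to K$ becomes precomposition with $[\phi]:[L]\to[K]$, and by \re{universes} (d) the subcategory $(sSp/K)_{\C{U}}$ corresponds to $\Fun([K],Set_{\C{U}})$. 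The computation at the heart of the proof is that for a bisimplex $a\in(\S_{\C{U}})_{n,m}$, regarded as a map $a:\Box[n,m]\to\S_{\C{U}}$, the pullback $a^*(\C{E}_{\C{U}})$ is canonically isomorphic to $E_a$ over $\Box[n,m]$. This is an unwinding of \re{leftfibr}: an object of $[\Box[n,m]]$ is a map $\nu:[l]\to[n,m]$, the map $a$ sends it to $\nu^*(a)$, and the fiber of $(\C{E}_{\C{U}})_l\to(\S_{\C{U}})_l$ over $\nu^*(a)$ is, using $E_{\nu^*(a)}=\nu^*(E_a)$, precisely the fiber of $E_a$ over $\nu$, functorially in $\nu$.

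Granting this, $p_{\C{U}}$ is a left fibration: by \re{left} (a) it is enough that $a^*(p_{\C{U}})$ be a left fibration for every bisimplex $a:\Box[n,m]\to\S_{\C{U}}$, and $a^*(p_{\C{U}})$ is the structure map $E_a\to\Box[n,m]$, a left fibration by the definition of $(\S_{\C{U}})_{n,m}$; its fibers are small because $E_a\in(sSp/\Box[n,m])_{\C{U}}$. So $\phi\mapsto\phi^*(p_{\C{U}})$ is a well-defined map $\Phi_K:\Hom_{sSp}(K,\S_{\C{U}})\to(LFib/K)_{\C{U}}$ (using \rl{left} (a) and that pullback preserves fiber size). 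In the other direction, given a left fibration $\pi:X\to K$ in $(sSp/K)_{\C{U}}$, let $\Psi_K(\pi):K\to\S_{\C{U}}$ send a bisimplex $k:\Box[n,m]\to K$ to the element of $(\S_{\C{U}})_{n,m}=(LFib/\Box[n,m])_{\C{U}}$ corresponding to $k^*(X)\to\Box[n,m]$; this is again a left fibration with small fibers by \rl{left} (a), and the identity $(\tau^*k)^*(X)=\tau^*(k^*(X))$ together with the definition of the structure maps of $\S_{\C{U}}$ shows that $\Psi_K(\pi)$ is a morphism of simplicial spaces.

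Finally I would check that $\Phi_K$ and $\Psi_K$ are mutually inverse. For $\Psi_K\circ\Phi_K=\id$: the value of $\Psi_K(\phi^*\C{E}_{\C{U}})$ at a bisimplex $k:\Box[n,m]\to K$ corresponds to $k^*(\phi^*\C{E}_{\C{U}})=(\phi\circ k)^*(\C{E}_{\C{U}})\cong E_{\phi\circ k}$, i.e. to the bisimplex $\phi\circ k\in(\S_{\C{U}})_{n,m}$, so $\Psi_K(\Phi_K(\phi))=\phi$. For $\Phi_K\circ\Psi_K=\id$: by \re{overcat} (c) the functor on $[K]$ attached to $\Psi_K(\pi)^*(\C{E}_{\C{U}})$ sends $k$ to $\un{\C{E}_{\C{U}}}$ evaluated at $\Psi_K(\pi)(k)$, which by the key computation is the fiber of $k^*(X)$ over the tautological bisimplex $\id_{[n,m]}\in(\Box[n,m])_{n,m}$, namely $\un X(k)$; naturality in $k$ gives $\Psi_K(\pi)^*(\C{E}_{\C{U}})\cong X$ over $K$. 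I expect the only real obstacle to be bookkeeping: pinning down the isomorphism $a^*(\C{E}_{\C{U}})\cong E_a$ and its naturality forces one to be careful about the category of bisimplices $[K]$ and the mild abuse in \re{leftfibr} by which $(E_a)_{n,m}$ denotes the fiber over the tautological bisimplex of $\Box[n,m]$ rather than the full set of $(n,m)$-bisimplices of $E_a$; once the dictionary of \re{overcat} is in place, the rest is formal.
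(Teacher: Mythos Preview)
Your proposal is correct and follows essentially the same approach as the paper: both establish $a^*(p_{\C{U}})=E_a$ for bisimplices $a$, deduce that $p_{\C{U}}$ is a left fibration via \re{left} (a), and then define the inverse map $E\mapsto\phi_E$ sending $k\in K_{n,m}$ to $k^*(E)\in(LFib/\Box[n,m])_{\C{U}}$. You spell out more carefully the overcategory dictionary of \re{overcat} and the verification that the two maps are mutually inverse, whereas the paper leaves this bookkeeping implicit; your caution about the notational abuse in $(E_a)_{n,m}$ is well placed but does not affect the argument.
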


\begin{proof}
By construction, for every
$a\in(\S_{\C{U}})_{n,m}=\Hom(\Box[n,m],\S_{\C{U}})$, the pullback
$a^*(p_{\C{U}})$ equals $E_{a}\to\Box[n,m]$. In particular, each
$a^*(p_{\C{U}})$ is a left fibration. Thus $p$ is a left fibration
by remark \re{left} (a).

Next notice that for every $\phi:K\to \S_{\C{U}}$, the pullback
$\phi^*(p_{\C{U}}):\phi^*(\C{E}_{\C{U}})\to K$ is a left
fibration, satisfying $a^*(\phi^*(p_{\C{U}}))=(\phi\circ
a)^*(p_{\C{U}})\in (LFib/\Box[n,m])_{\C{U}}$ for each
$a:\Box[n,m]\to K$. Thus $\phi^*(p_{\C{U}})\in (LFib/K)_{\C{U}}$.

Conversely, every  $E\in (LFib/K)_{\C{U}}$ defines a map
$\phi_E:K\to \S_{\C{U}}$, which sends $a\in
K_{n,m}=\Hom(\Box[n,m],K)$ to the left fibration
$a^*(E)\to\Box[n,m]$ in $(LFib/\Box[n,m])_{\C{U}}$. Then the map
$E\mapsto \phi_E$ is inverse to $\phi\mapsto \phi^*(p_{\C{U}})$.
\end{proof}

%Indeed, for every $\phi\in X_{n,m}=\Hom(\Box[n,m],X)$ element
%$f_{f^*(E)}(\phi)\in LFIB(\Box[n,m])=(\S_{\C{U}})_{n,m}$ equals
%$\phi^*(f^*(E))=(f\circ\phi)^*(E)=f\circ\phi=f(\phi)$, implying
%that $f_{f^*(E)}=f$, while $\phi^*(f_Y^*(E))\in
%LFIB(\Box[n,m])=(\S_{\C{U}})_{n,m}$ equals
%$(f_Y\circ\phi)^*(E)=f_Y(\phi)=\phi^*(Y)$, implying that
%$(f_Y)^*(E)=Y$.

\begin{Emp} \label{E:remsu}
{\bf Remarks.} (a) The main result of this subsection
(\rt{spaces}) asserts that $\S_{\C{U}}$ is a complete Segal
space. It is our model for the $\infty$-category of spaces, or,
more formally, the $(\infty,1)$-category of
$(\infty,0)$-categories.

(b) It can be shown that every inclusion $i:\C{U}\hra\C{V}$ of
infinite sets induces a fully faithful map $i:\S_{\C{U}}\hra
\S_{\C{V}}$ of complete Segal spaces.

(c) One can show (see \cite{KV2}) that $\S_{\C{U}}$ is equivalent
to the fibrant replacement $N^f(Sp_{\C{U}},W)$ of the simplicial
space $N(Sp_{\C{U}},W)$, associated by Rezk (\cite[3.3]{Re}) to
the pair $(Sp_{\C{U}},W)$, where $W$ denotes weak equivalences.

(d) One can also consider the "large" $(\infty,1)$-category of
$(\infty,0)$-categories $\wh{\S}$ such that $\wh{\S}_{n,m}$ is the
class of all left fibrations $E\to\Box[n,m]$.

(e) In \cite{KV2} we generalize \re{leftfibr} and construct the
$(\infty,n+1)$-category of $(\infty,n)$-categories.
\end{Emp}

\begin{Emp}
{\bf Notation.} (a) For every $n\geq 1$ denote by
$\S^{(n)}_{\C{U}}\in sSp$ the simplicial space such that
$(\S^{(n)})_{m,k}$ is the set of diagrams
$\phi:E^{(0)}\overset{\phi_1}{\lra} \ldots
\overset{\phi_n}{\lra}E^{(n)}$ over $\Box[m,k]$, where each
$E^{(i)}\to\Box[m,k]$ belongs to $(LFib/\Box[m,k])_{\C{U}}$.

(b) To every map $\mu:[m]\to[n]$ we associate morphism
$\mu^*:\S^{(n)}_{\C{U}}\to\S^{(m)}_{\C{U}}$, which sends diagram
$\phi:E^{(0)}\overset{\phi_1}{\lra} \ldots
\overset{\phi_n}{\lra}E^{(n)}$ to a diagram
$\mu^*(\phi):E^{(\mu(0))}\to \ldots\to E^{(\mu(m))}$, whose
morphisms are compositions of the $\phi_i$'s.

(c) Let $\S_{\C{U}}^{we}\subset \S^{(1)}_{\C{U}}$ be a simplicial
subspace such that $(\S^{(we)})_{m,k}\subset (\S^{(1)})_{m,k}$ consists of diagrams 
consists of diagrams $E^{(0)}\overset{\phi}{\lra} E^{(1)}$, where $\phi$ is a weak equivalence 
(use \rl{heprop} (b)).

(d) We have a natural projection $\S_{\C{U}}^{(n)}\to
(\S_{\C{U}})^{n+1}$, which maps a diagram $\phi$ as in (a) to the
$(n+1)$-tuple $E^{(0)},\ldots,E^{(n)}$.
\end{Emp}

\begin{Emp} \label{E:remsn}
{\bf Remarks.} (a) Note that for every $X,Y\in sSp/K$, to give a
map $\phi\in\Hom_K(X,Y)$ is the same as to give maps
$\tau^*(\phi)\in\Hom_{\Box[n,m]}(\tau^*(X),\tau^*(Y))$ for all
$\phi:\Box[n,m]\to K$, compatible with compositions. Using this observation 
and \rl{univ}, we conclude that for every $K\in sSp$ we have a natural
bijection between $\Hom_{sSp}(K,\S_{\C{U}}^{(n)})$ and set of
diagrams $\phi:E^{(0)}\overset{\phi_1}{\lra} \ldots
\overset{\phi_n}{\lra}E^{(n)}$ of left fibrations from
$(sSp/K)_{\C{U}}$.

(b) By definition, a map $\phi\in\Hom_{sSp}(K,\S_{\C{U}}^{(1)})$
belongs to $\Hom_{sSp}(K,\S_{\C{U}}^{(we)})$ if and only if
$\phi(a)\in (\S_{\C{U}}^{(we)})_{n,m}$ for every $a\in K_{n,m}$.
Moreover, by \rl{leftequiv}, it happens if and only if $\phi(a)\in
(\S_{\C{U}}^{(we)})_{0,0}$ for every $a\in K_{0,0}$. Using \rl{leftequiv} again, 
we see that under the bijection of (a) elements of
$\Hom_{sSp}(K,\S_{\C{U}}^{(we)})\subset\Hom_{sSp}(K,\S_{\C{U}}^{(1)})$
correspond to weak equivalences $\phi:E^{(0)}\to E^{(1)}$.
\end{Emp}

The following two propositions and a corollary will be shown in
\rs{proofs}.

\begin{Prop} \label{P:spaces}
(a) The simplicial space $\S_{\C{U}}\in sSp$ is Ready fibrant.

(b) The projections $\S_{\C{U}}^{(n)}\to (\S_{\C{U}})^{n+1}$ and
$\S_{\C{U}}^{(we)}\to (\S_{\C{U}})^2$ are fibrations.

(c) Both compositions $\S_{\C{U}}^{(we)}\to
(\S_{\C{U}})^2\overset{p_i}{\lra} \S_{\C{U}}$ are trivial
fibrations.

(d) $(\S_{\C{U}}^{we})_0\subset (\S^{(1)}_{\C{U}})_0$ is a union
of a connected components.
\end{Prop}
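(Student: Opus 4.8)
The uniform tool is \rl{univ} together with \re{remsn}, which identify maps $K\to\S_{\C{U}}$ (resp.\ $K\to\S^{(n)}_{\C{U}}$, resp.\ $K\to\S^{(we)}_{\C{U}}$) with left fibrations over $K$ (resp.\ with $n$ composable left fibrations, resp.\ with weak equivalences of left fibrations) lying in $(sSp/K)_{\C{U}}$; so every lifting problem for these simplicial spaces becomes an extension problem for left fibrations, to be solved using stability of left fibrations under base change (\rl{left}) and properness of $sSp$. I would begin with (b). By \re{rfib}(a) it suffices to check the RLP against the trivial cofibrations $j\colon A\hra B$ of the form $(\p F[n']\times\Dt[m'])\sqcup_{(\p F[n']\times\La^i[m'])}(F[n']\times\La^i[m'])\hra\Box[n',m']$. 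For $\S^{(n)}_{\C{U}}\to(\S_{\C{U}})^{n+1}$ such a problem reads: given left fibrations $E^{(0)}_B,\dots,E^{(n)}_B$ over $B$ and morphisms $\phi_l\colon E^{(l-1)}_A\to E^{(l)}_A$ over $A$ (with $E^{(l)}_A:=E^{(l)}_B\times_B A$), extend the $\phi_l$ to morphisms over $B$. Now $E^{(l-1)}_A=E^{(l-1)}_B\times_B A\hra E^{(l-1)}_B$ is the base change of $A\hra B$ along the fibration $E^{(l-1)}_B\to B$, hence a monomorphism, and a weak equivalence by right-properness, i.e.\ a trivial cofibration; so CM4 against the fibration $E^{(l)}_B\to B$ produces the extension. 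For $\S^{(we)}_{\C{U}}\to(\S_{\C{U}})^2$ one runs the same argument with $n=1$, and then the resulting $\phi_1$ is automatically a weak equivalence: in the square relating $\phi_1|_A$ to $\phi_1$ the verticals $E^{(j)}_A\hra E^{(j)}_B$ are weak equivalences, so 2-out-of-3 applies.

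Part (d) is independent of the rest. By \rl{leftequiv}, a morphism of left fibrations over $\Dt[k]$ is a weak equivalence iff its restriction to each of the $k+1$ vertices is; hence $(\S^{(we)}_{\C{U}})_0$ is the full simplicial subset of $(\S^{(1)}_{\C{U}})_0$ on the vertices $\phi\in(\S^{(1)}_{\C{U}})_{0,0}$ that are weak equivalences, and it is enough to show this vertex set is closed under edges. So let $\phi\colon E^{(0)}\to E^{(1)}$ be a morphism of left fibrations over $\Dt[1]$ with $\phi|_0$ a weak equivalence. Each $\dt^i\colon\Dt[0]\hra\Dt[1]$ is a trivial cofibration and each $E^{(j)}\to\Dt[1]$ is a fibration, so by right-properness the fibre inclusions $E^{(j)}|_i\hra E^{(j)}$ are weak equivalences; 2-out-of-3 applied to the square with horizontal maps $\phi|_0,\phi$ and vertical maps the fibre inclusions over the vertex $0$ shows $\phi$ is a weak equivalence, and a second application over the vertex $1$ shows $\phi|_1$ is a weak equivalence. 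Thus $(\S^{(we)}_{\C{U}})_{0,0}$ is a union of connected components of the vertex set, and since $(\S^{(we)}_{\C{U}})_0$ is full on these vertices, $(\S^{(we)}_{\C{U}})_0\subset(\S^{(1)}_{\C{U}})_0$ is a union of connected components.

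Part (a) is the main obstacle. By \re{rfib}(a), $\S_{\C{U}}$ is Reedy fibrant iff it has the RLP against the cofibrations $(\p F[n]\times\Dt[m])\sqcup_{(\p F[n]\times\La^i[m])}(F[n]\times\La^i[m])\hra\Box[n,m]$, which by \rl{univ} means exactly that every left fibration over the source of such a map extends to a left fibration over $\Box[n,m]$. These maps are trivial cofibrations (pushout-products of the cofibration $\p F[n]\hra F[n]$ with the trivial cofibration $\La^i[m]\hra\Dt[m]$, using \rl{Cart}), so the real task is the more general one: every left fibration over the source of a trivial cofibration $A\hra B$ extends to a left fibration over $B$. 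The naive plan — first extend to a Reedy fibration over $B$ by \rl{heprop}(d), then correct it to a left fibration — does not go through directly, since left-fibrancy cannot be checked fibrewise over $A\subset B$ and so the correction must leave the restriction to $A$ untouched. I expect this to require the theory of quasifibrations developed in Section~3 (announced in the introduction as needed for precisely such arguments): it should yield the desired extension, i.e.\ a replacement of a Reedy fibration by a left fibration relative to $A$.

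Finally (c). The composite $t\colon\S^{(we)}_{\C{U}}\to(\S_{\C{U}})^2\overset{p_i}{\lra}\S_{\C{U}}$ is a fibration: the first map is a fibration by (b), and $p_i$ is a fibration because $\S_{\C{U}}$ is fibrant by (a). It has the section $s\colon E\mapsto(\Id_E\colon E\to E)$ with $t\circ s=\Id$, so by 2-out-of-3 it is enough to show $s$ is a weak equivalence, whence $t$ is a trivial fibration. By \rl{heprop}(a) (with $Z=\pt$) and \re{herem}(b), $s$ is a weak equivalence provided $\pi_0\Map(K,\S_{\C{U}})\to\pi_0\Map(K,\S^{(we)}_{\C{U}})$ is a bijection for every $K$; since $t\circ s=\Id$ this map is automatically injective, so it remains to prove surjectivity, i.e.\ that every weak equivalence $\psi\colon E^{(0)}\to E^{(1)}$ of left fibrations over $K$ is connected, inside the simplicial set $\Map(K,\S^{(we)}_{\C{U}})$ of weak equivalences of left fibrations over $K$, to an identity. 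Concretely (using \re{remsn} to describe the edges of that simplicial set) one wants a weak equivalence of left fibrations over $K\times\Dt[1]$ restricting to $\psi$ at one end and to $\Id_{E^{(1)}}$ at the other — the mapping cylinder of $\psi$, replaced by an honest left fibration relative to its two ends. This, once more, is supplied by the quasifibration machinery of Section~3.
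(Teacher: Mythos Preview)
Your arguments for (b) and (d) are correct and essentially match the paper's. For (b) the paper does exactly what you do; for (d) the paper argues slightly differently (reducing to $a\sim b$ in a fibre of $\S^{(1)}\to\S\times\S$ and then observing that homotopic maps have the same weak-equivalence status), but your direct argument via \rl{leftequiv} and right-properness is just as good.

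For (a), your ``naive plan'' is in fact what the paper does, and the correction step you worry about is unnecessary. Factor $Y_A\to A\hra B$ as a trivial cofibration $Y_A\hra X_B$ followed by a fibration $X_B\to B$ (staying in the universe via \rl{fibrepl}(a)), and let $Y_B\subset X_B$ be the largest subspace with $Y_B\times_B A=Y_A$; this is \rl{fib}, which is also what lies behind \rl{heprop}(d). The point you are missing is that \rl{fib} gives more than a bare fibration: it shows $Y_B\subset X_B$ is a strong deformation retract, so $Y_B\hra X_B$ is a weak equivalence. Since $A\hra B$ is a trivial cofibration and $X_B\to B$ a fibration, right-properness makes $X_A\hra X_B$ a weak equivalence, hence $Y_A\hra X_A$ is one too (2-out-of-3), so again by \rl{fib} $Y_B\to B$ is a fibration and $Y_A\hra Y_B$ a weak equivalence. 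Now \rl{lqf} transfers the left property from $Y_A\to A$ to $Y_B\to B$. No quasifibration replacement is involved; the only input from Section~3 is \rl{fib} (a deformation-retract lemma) and \rl{lqf} (which here is just a 2-out-of-3 argument, both maps being honest fibrations).

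For (c), the paper takes the direct route rather than your section argument: it verifies the RLP of $\S^{(we)}\to\S$ against an arbitrary cofibration $A\hra B$. Given $E^{(0)}\to B$, $E^{(1)}_A\to A$ and a weak equivalence $\phi'\colon E^{(0)}|_A\to E^{(1)}_A$, one forms the pushout $E^{(0)}\sqcup_{E^{(0)}|_A}E^{(1)}_A$, which is a quasifibration over $B$ by \rl{pushout}, and then applies \rco{almost} to obtain a fibrant replacement $E^{(1)}\to B$ in the universe with $E^{(1)}|_A=E^{(1)}_A$; \rl{lqf} makes it left. This is where the quasifibration machinery is genuinely used. Your approach via the section $s$ and \rl{heprop}(a) is valid, but to produce the connecting edge in $\Map(K,\S^{(we)})$ you would need precisely this pushout-then-replace construction (applied to the cofibration $K\times\p\Dt[1]\hra K\times\Dt[1]$), so nothing is saved; the paper's formulation is just more direct. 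In short: you located the quasifibration machinery in the wrong part --- it is (c), not (a), that needs it.
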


\begin{Prop} \label{P:sn}
(a) There exists a homotopy equivalence $(\S_{\C{U}})^{\Dt[1]}\to
\S_{\C{U}}^{(we)}$ over $(\S_{\C{U}})^2$.

(b) For every $n\in\B{N}$ there exists a "natural" homotopy equivalence
$\psi^{(n)}_{\C{U}}:\S_{\C{U}}^{(n)}\to (\S_{\C{U}})^{F[n]}$ over
$(\S_{\C{U}})^{n+1}$, defined uniquely up to a homotopy.

(c) Moreover, for every map $\mu:[m]\to[n]$ the diagram
\begin{equation} \label{Eq:s}
\begin{CD}
 \S_{\C{U}}^{(n)} @>\psi^{(n)}>> (\S_{\C{U}})^{F[n]}\\
 @V\mu^*VV @V\mu^*VV\\
 \S_{\C{U}}^{(m)} @>\psi^{(m)}>> (\S_{\C{U}})^{F[m]}
 \end{CD}
\end{equation}
is homotopy commutative, that is, $\mu^*\circ\psi^{(n)}\sim
\psi^{(m)}\circ\mu^*$.
\end{Prop}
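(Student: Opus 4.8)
The plan is to build the equivalences $\psi^{(n)}$ inductively on $n$, using the left-fibration description of $\S_{\C{U}}$ and the Segal/completeness machinery already established for $\S_{\C{U}}$ in \rp{spaces}. For part (a), I would exploit \rp{spaces} (c): both projections $\S_{\C{U}}^{(we)}\to\S_{\C{U}}$ are trivial fibrations, and the projection $(\S_{\C{U}})^{\Dt[1]}\to(\S_{\C{U}})^2$ is also a trivial fibration since $\Dt[0]\sqcup\Dt[0]\hra\Dt[1]$ is a trivial cofibration and $\S_{\C{U}}$ is fibrant. Hence both spaces receiving the map to $(\S_{\C{U}})^2$ are fibrations whose fibers over any point of $(\S_{\C{U}})^2$ are contractible Kan complexes; by \rl{heprop} (b) and \rl{propfib}, any map over $(\S_{\C{U}})^2$ between them realizing the identity on the base is a weak equivalence, and such a map exists by lifting $(\S_{\C{U}})^{\Dt[1]}\to(\S_{\C{U}})^2$ against the trivial fibration $\S_{\C{U}}^{(we)}\to(\S_{\C{U}})^2$. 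A weak equivalence between fibrations is a homotopy equivalence by \rl{heprop} (b), giving (a).

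For part (b), I would proceed by induction, the case $n=1$ being (a) composed with the obvious map $\S_{\C{U}}^{(1)}\to\S_{\C{U}}^{(we)}$... no — rather, one needs $\S^{(1)}\simeq\S^{\Dt[1]}$, but $\S^{(1)}$ is not $\S^{we}$. Instead I would observe that $\S^{(1)}=\S^{F[1]}$ essentially by \rl{univ} together with \rl{left} (b): a left fibration over $\Box[n,m]\times F[1]$ with its restriction maps is precisely a map $\phi:E^{(0)}\to E^{(1)}$ of left fibrations over $\Box[n,m]$, since $F[1]$ has two objects and one nontrivial arrow. More carefully, \rl{univ} gives $\Hom(K,\S_{\C{U}})=(LFib/K)_{\C{U}}$, so $\Hom(K,(\S_{\C{U}})^{F[n]})=\Hom(K\times F[n],\S_{\C{U}})=(LFib/K\times F[n])_{\C{U}}$, and by \re{remsn} (a) this should match diagrams of $n$ composable left fibrations over $K$ once one knows a left fibration over $K\times F[n]$ is the same data as a chain $E^{(0)}\to\cdots\to E^{(n)}$ over $K$. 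The latter is not an equality but should hold up to weak equivalence: a left fibration over $F[n]$ is determined up to equivalence by its fibers over the vertices together with the transition maps, because the inclusion of the spine $F[1]\sqcup_{F[0]}\cdots\sqcup_{F[0]}F[1]\hra F[n]$ interacts with left fibrations via \rl{leftcart}. So the core of (b) is: the restriction-to-spine map $(\S_{\C{U}})^{F[n]}\to(\S_{\C{U}})^{\text{spine}}$ is a weak equivalence (since $\S_{\C{U}}$ is a Segal space by \rp{spaces}, though completeness is only asserted later — but the Segal condition $\varphi_n$ is a trivial fibration is what is needed and follows from \rp{spaces}(a) plus \rp{spaces}(b,c) giving the $n=1$ comparison), and similarly $\S_{\C{U}}^{(n)}\to\S_{\C{U}}^{(we)}\times_{\S}\cdots\times_{\S}\S^{(we)}$, and these two spine-type objects are compared vertex-by-vertex by part (a). Assembling these over $(\S_{\C{U}})^{n+1}$ and invoking \rco{hfibprod} (repeatedly, as in the proof of \rl{leftcart}) yields a weak equivalence $\S^{(n)}\to(\S_{\C{U}})^{F[n]}$ over $(\S_{\C{U}})^{n+1}$; since both are fibrations over that base (by \rp{spaces}(b) and \re{Cart}(a)), \rl{heprop}(b) upgrades it to a homotopy equivalence. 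Uniqueness up to homotopy follows from \rl{heprop}(a): the space of such maps over $(\S_{\C{U}})^{n+1}$, being $\Map$ into a fibration whose fibers are contractible, has $\pi_0$ a single point.

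For part (c), naturality in $\mu:[m]\to[n]$, I would argue that both composites $\mu^*\circ\psi^{(n)}$ and $\psi^{(m)}\circ\mu^*$ are homotopy equivalences $\S_{\C{U}}^{(n)}\to(\S_{\C{U}})^{F[m]}$ lying over the map $\mu^*:(\S_{\C{U}})^{n+1}\to(\S_{\C{U}})^{m+1}$ composed appropriately — more precisely, both live over $(\S_{\C{U}})^{m+1}$ via the relevant projections, and restrict to the same (canonical, up to homotopy) map on fibers over each point, namely the one induced on diagrams of left fibrations by reindexing and composing along $\mu$. Then the same uniqueness argument as in (b), applied with the map $\Map_{(\S_{\C{U}})^{m+1}}(\S^{(n)},(\S_{\C{U}})^{F[m]})$, which has connected $\pi_0$ because the target projection is a trivial fibration on fibers, forces $\mu^*\circ\psi^{(n)}\sim\psi^{(m)}\circ\mu^*$.

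The main obstacle will be step (b): carefully justifying that a left fibration over $K\times F[n]$ is the same data, up to a natural weak equivalence compatible with restriction, as a chain of $n$ composable left fibrations over $K$, i.e. that the spine inclusion induces a weak equivalence after applying $(\S_{\C{U}})^{(-)}$ and that this is compatible with the fiberwise identification from part (a). This is exactly the kind of Segal-space bookkeeping done in \cite{Re} Lemma 10.3, which is already invoked in the proof of \rl{leftcart}, so I expect it to go through by the same technique — decomposing $F[n]\times F[1]$ into copies of $F[n+1]$ glued along $F[n]$ — but it requires care to keep everything functorial over the base $(\S_{\C{U}})^{n+1}$ and to feed the fiber products into \rco{hfibprod} correctly.
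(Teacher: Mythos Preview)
Your proposal contains two genuine gaps.

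In part (a) you assert that $(\S_{\C{U}})^{\Dt[1]}\to(\S_{\C{U}})^2$ is a trivial fibration because $\Dt[0]\sqcup\Dt[0]\hra\Dt[1]$ is a trivial cofibration. But $\partial\Dt[1]\hra\Dt[1]$ is \emph{not} a trivial cofibration (two points are not weakly equivalent to an interval), so this map is only a fibration. Likewise, \rp{spaces}(c) says only that each composite $\S^{(we)}\to\S^2\to\S$ is a trivial fibration, not that $\S^{(we)}\to\S^2$ is; its fiber over $(E^{(0)},E^{(1)})$ is empty whenever the two left fibrations are not weakly equivalent. Hence neither your construction of the map (lifting against a supposed trivial fibration) nor your contractible-fiber argument goes through. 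The paper instead \emph{builds} the map $\S^{\Dt[1]}\to\S^{(we)}$ directly from \rl{heprop}(c), applied to the universal left fibration over $\S^{\Dt[1]}\times\Dt[1]$, and then compares the single projections $\S^{\Dt[1]}\to\S$ and $\S^{(we)}\to\S$, both of which are legitimately trivial fibrations, to conclude by 2-out-of-3.

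More seriously, your approach to part (b) is circular. You want to compare $\S^{F[n]}$ and $\S^{(n)}$ via the spine, invoking the Segal condition $\varphi_n$ for $\S$. But the Segal property of $\S$ is \rt{spaces}, proved \emph{after} and \emph{using} \rp{sn}(b),(c); nothing in \rp{spaces} gives the Segal maps as equivalences. The paper avoids this by constructing $\psi^{(n)}$ without any Segal hypothesis: form the discrete iterated cylinder $Cyl^{disc}(\phi)\to\S^{(n)}\times F[n]$ of the universal diagram $\phi$, show it is a left quasifibration in $(sSp/\S^{(n)}\times F[n])_{\C{U}}$ (\rl{disc}), take a fibrant replacement with prescribed restrictions over the vertices (\rco{almost}, \rco{lqf}), and let $\psi^{(n)}$ classify it. That $\psi^{(n)}$ is a homotopy equivalence is then checked by hand on $\pi_0$ of mapping spaces (\re{step4}--\re{step6}), using only the lifting properties of left fibrations from \rl{lfibr}. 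Part (c) then falls out because both routes around the square classify fibrant replacements of the same cylinder $Cyl^{disc}(\mu^*\phi)$, and \rco{presh} makes them homotopic.
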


%The following corollary implies that for every $K\in ssP$
%homotopies and mapping spaces in $(\S_{\C{U}})^K$ can be
%understood "naively".

\begin{Cor} \label{C:presh}
Let $K\in sSp$, let $\al,\beta\in \Hom(K,\S_{\C{U}})$, and let
$E_{\al}\to K$ and $E_{\beta}\to K$ be the corresponding left
fibrations. Then $\al\sim\beta$ in $(\S_{\C{U}})^K$ if and only if
the left fibrations $E_{\al}$ and $E_{\beta}$ are homotopy
equivalent over $K$.
\end{Cor}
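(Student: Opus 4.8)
The plan is to reduce each of the two conditions in the corollary to the solvability of a lifting problem for the single map $(\al,\beta):K\to(\S_{\C{U}})^2$, along two a priori different maps with target $(\S_{\C{U}})^2$: the endpoint restriction $(\S_{\C{U}})^{\Dt[1]}\to(\S_{\C{U}})^2$ on the one hand, and the projection $\S_{\C{U}}^{(we)}\to(\S_{\C{U}})^2$ on the other. The corollary will then follow from \rp{sn}(a), which makes these two objects homotopy equivalent over $(\S_{\C{U}})^2$.

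First I would treat the left-hand side. Since $\S_{\C{U}}$ is fibrant by \rp{spaces}(a), the map $\S_{\C{U}}\to\pt$ is a fibration, so by \re{he}(a) the relation ``$\al\sim\beta$ in $(\S_{\C{U}})^K$'' (i.e. $\al\sim_\pt\beta$) holds if and only if there is a map $h:K\times\Dt[1]\to\S_{\C{U}}$ with $h|_0=\al$ and $h|_1=\beta$. By the adjunction $\Hom(K\times\Dt[1],-)\cong\Hom(K,(-)^{\Dt[1]})$, such an $h$ is the same datum as a map $K\to(\S_{\C{U}})^{\Dt[1]}$ whose composition with the endpoint restriction $(\S_{\C{U}})^{\Dt[1]}\to(\S_{\C{U}})^2$ equals $(\al,\beta)$. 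Hence the left-hand side is equivalent to: $(\al,\beta)$ lifts along $(\S_{\C{U}})^{\Dt[1]}\to(\S_{\C{U}})^2$.

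Next I would treat the right-hand side. By \re{remsn}(a), a lift of $(\al,\beta)$ along the projection $\S_{\C{U}}^{(1)}\to(\S_{\C{U}})^2$ is the same as a morphism $\phi:E_\al\to E_\beta$ over $K$, and by \re{remsn}(b) it factors through $\S_{\C{U}}^{(we)}$ if and only if $\phi$ is a weak equivalence. As left fibrations are fibrations, $E_\al$ and $E_\beta$ are fibrations over $K$, so by \rl{heprop}(b) together with \re{herem}(b) a weak equivalence $E_\al\to E_\beta$ over $K$ exists if and only if $E_\al$ and $E_\beta$ are homotopy equivalent over $K$. Hence the right-hand side is equivalent to: $(\al,\beta)$ lifts along $\S_{\C{U}}^{(we)}\to(\S_{\C{U}})^2$. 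Finally, \rp{sn}(a) provides a homotopy equivalence $(\S_{\C{U}})^{\Dt[1]}\to\S_{\C{U}}^{(we)}$ over $(\S_{\C{U}})^2$, which by definition (\re{he}(b)) comes with a homotopy inverse also over $(\S_{\C{U}})^2$; composing a lift of $(\al,\beta)$ with the appropriate one of these two maps over $(\S_{\C{U}})^2$ shows that $(\al,\beta)$ lifts along $(\S_{\C{U}})^{\Dt[1]}\to(\S_{\C{U}})^2$ if and only if it lifts along $\S_{\C{U}}^{(we)}\to(\S_{\C{U}})^2$. Chaining the three equivalences yields the corollary.

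The proof is essentially a chain of formal translations, so the difficulty is bookkeeping rather than any single step. Two checks need attention: that the bijection of \re{remsn}(a) matches lifts of $(\al,\beta)$ precisely with morphisms $E_\al\to E_\beta$ over $K$ (and not with morphisms between arbitrary left fibrations over $K$), and that ``there exists a weak equivalence $E_\al\to E_\beta$ over $K$'' really does coincide with ``$E_\al$ and $E_\beta$ are homotopy equivalent over $K$'' for fibrations over $K$. The only non-formal ingredient is \rp{sn}(a), which is proved independently in \rs{proofs} and which may here be treated as a black box.
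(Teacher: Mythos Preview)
Your proof is correct and follows essentially the same route as the paper: both reduce the two sides of the equivalence to the nonemptiness of the fibers of $(\S_{\C{U}})^{\Dt[1]}\to(\S_{\C{U}})^2$ and $\S_{\C{U}}^{(we)}\to(\S_{\C{U}})^2$ over $(\al,\beta)$, and then invoke the homotopy equivalence of \rp{sn}(a). The only cosmetic difference is that the paper first applies $\Map(K,-)$ and compares fibers of the resulting homotopy equivalence, whereas you transport lifts directly by composing with the homotopy equivalence and its inverse over $(\S_{\C{U}})^2$; you also make explicit the passage between ``weak equivalence over $K$'' and ``homotopy equivalence over $K$'' via \rl{heprop}(b) and \re{herem}(b), which the paper leaves implicit.
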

%(b) $\psi^{(1)}$ induces a homotopy equivalence
%$\Map_{sSp/K}(E_{\al},E_{\beta})\to
%\map_{(\S_{\C{U}})^K}(\al,\beta)$.

%\begin{proof}
%The homotopy equivalence $\psi^{(1)}:\S_{\C{U}}^{(1)}\to
%\S_{\C{U}}^{F[1]}$ over $(\S_{\C{U}})^2$ induces a homotopy
%equivalence
%$\Map(K,\S_{\C{U}}^{(1)})\to\Map(K,(\S_{\C{U}})^{F[1]}=\Map(F[1],(\S_{\C{U}})^K)$
%over $\Map(K,(\S_{\C{U}})^2)$, hence a homotopy equivalence
%$\Map_{sSp/K}(E_{\al},E_{\beta})\to
%\map_{(\S_{\C{U}})^K}(\al,\beta)$ between its fibers over
%$(\al,\beta)\in\Map(K,(\S_{\C{U}})^2)$.
%\end{proof}

Now we are ready to prove one of the main results of this work.

\begin{Thm} \label{T:spaces}
$\S_{\C{U}}$ is a complete Segal space.
\end{Thm}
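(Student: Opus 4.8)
The plan is to verify the three conditions defining a complete Segal space in turn, using Propositions \ref{P:spaces} and \ref{P:sn} together with the already-established Reedy fibrancy of $\S_{\C{U}}$ (\rp{spaces}(a)). First I would check the Segal condition: I must show that for each $n\geq 2$ the map $\varphi_n:(\S_{\C{U}})_n\to (\S_{\C{U}})_1\times_{(\S_{\C{U}})_0}\cdots\times_{(\S_{\C{U}})_0}(\S_{\C{U}})_1$ is a weak equivalence. Here the key input is \rp{sn}(b): the homotopy equivalence $\psi^{(n)}:\S_{\C{U}}^{(n)}\to(\S_{\C{U}})^{F[n]}$ over $(\S_{\C{U}})^{n+1}$, which upon passing to zero-spaces identifies $(\S_{\C{U}})_n=\Map(F[n],\S_{\C{U}})$ with the zero-space of $\S_{\C{U}}^{(n)}$. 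Since $\S_{\C{U}}^{(n)}$ is, by its very definition and remark \ref{E:remsn}(a), the space of $n$-step chains $E^{(0)}\to\cdots\to E^{(n)}$ of left fibrations, it tautologically decomposes as an iterated fiber product of copies of $\S_{\C{U}}^{(1)}$ over copies of $\S_{\C{U}}=\S_{\C{U}}^{(0)}$; the compatibility \rp{sn}(c) of the $\psi^{(k)}$ with the face maps $\mu^*$ shows that $\psi^{(n)}$ is, up to homotopy, the corresponding iterated fiber product of the maps $\psi^{(1)}$, and these fiber products are homotopy fiber products because all the relevant maps are fibrations by \rp{spaces}(b). Hence $\varphi_n$ is a weak equivalence (invoking \rco{hfibprod}), so $\S_{\C{U}}$ is a Segal space.

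Next I would identify $(\S_{\C{U}})_{heq}$. Having established that $\S_{\C{U}}$ is a Segal space, $\map_{\S_{\C{U}}}(E,E')$ is by definition the fiber of $(\dt_0,\dt_1):(\S_{\C{U}})_1\to (\S_{\C{U}})_0\times(\S_{\C{U}})_0$; unwinding \rl{univ}, a point of $(\S_{\C{U}})_1$ is a morphism $E^{(0)}\to E^{(1)}$ of left fibrations over $\pt$, i.e.\ of spaces, and by \rco{presh} two such are in the same connected component iff the underlying left fibrations are homotopy equivalent over $\pt$. The claim is that $\al\in(\S_{\C{U}})_1$ is a homotopy equivalence in the sense of \re{css}(a) precisely when $\al$ is a weak equivalence of left fibrations, i.e.\ when $\al\in(\S_{\C{U}}^{(we)})_0$; this follows by translating the condition "$[\al]$ is an isomorphism in $\Ho\S_{\C{U}}$" through the composition law \form{comp}, which under $\psi^{(2)}$ corresponds to composition of chains of left fibrations, so that $[\al]$ being invertible means $\al$ admits a two-sided homotopy inverse among morphisms of left fibrations — and a morphism of left fibrations with a homotopy inverse is a weak equivalence by \rl{heprop}(b), while conversely a weak equivalence of left fibrations over $\pt$ is a homotopy equivalence by \rl{heprop}(b) again. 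Thus $(\S_{\C{U}})_{heq}=(\S_{\C{U}}^{we})_0$ as subspaces of $(\S_{\C{U}})_1=(\S_{\C{U}}^{(1)})_0$, and this is a union of connected components by \rp{spaces}(d), consistent with \re{css}(b).

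Finally, to verify completeness I would use the criterion of \rl{css}(b): $\S_{\C{U}}$ is complete iff $\dt_0:(\S_{\C{U}})_{heq}\hookrightarrow(\S_{\C{U}})_1\to(\S_{\C{U}})_0$ is a trivial fibration. But under the identification $(\S_{\C{U}})_{heq}=(\S_{\C{U}}^{we})_0$ just established, the map $\dt_0$ is exactly the zero-space of the first projection $\S_{\C{U}}^{(we)}\to(\S_{\C{U}})^2\overset{p_1}{\to}\S_{\C{U}}$, which is a trivial fibration by \rp{spaces}(c). Passing to zero-spaces preserves trivial fibrations (a trivial fibration in $sSp$ has the RLP against all cofibrations, and applying $(-)_0=\Map(F[0],-)$ preserves this by \re{Cart}), so $\dt_0:(\S_{\C{U}})_{heq}\to(\S_{\C{U}})_0$ is a trivial fibration, and $\S_{\C{U}}$ is complete.

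I expect the main obstacle to be the bookkeeping in the second and third paragraphs: carefully matching the abstractly-defined homotopy-category structure on $\S_{\C{U}}$ (built from the sections and maps of \re{ho} via $\varphi_2,\varphi_3$) with the concrete operation of composing chains of left fibrations encoded by $\S_{\C{U}}^{(n)}$, so that "homotopy equivalence in $\Ho\S_{\C{U}}$" really does coincide with "weak equivalence of left fibrations" on the nose. The Segal condition itself is comparatively mechanical once \rp{sn} is granted, and the completeness step is then immediate from \rl{css}(b) and \rp{spaces}(c).
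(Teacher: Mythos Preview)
Your Segal-condition argument is essentially the paper's: use \rp{sn}(b),(c) to compare $\S^{F[n]}$ with $\S^{(n)}$, observe the latter tautologically decomposes as an iterated fiber product of $\S^{(1)}$'s over $\S$, and invoke \rco{hfibprod}. Fine.

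The completeness argument has a genuine slip. You write ``a point of $(\S_{\C{U}})_1$ is a morphism $E^{(0)}\to E^{(1)}$ of left fibrations over $\pt$'' and then ``$(\S_{\C{U}})_1=(\S_{\C{U}}^{(1)})_0$''. This is false on the nose: by \rl{univ}, a point of $(\S_{\C{U}})_1=(\S_{\C{U}})_{1,0}$ is a left fibration over $F[1]$, not a morphism of left fibrations over $\pt$. The two spaces $(\S)_1=(\S^{F[1]})_0$ and $(\S^{(1)})_0$ are related only by the weak equivalence $\psi:=(\psi^{(1)})_0$, which is not the identity. So your ``identification'' $(\S)_{heq}=(\S^{(we)})_0$ is not a literal equality of subspaces, and the whole third paragraph collapses.

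What you actually need is that $\psi$ restricts to a weak equivalence $(\S^{(we)})_0\to\S_{heq}$; since both are unions of connected components and $\psi$ is a weak equivalence, this amounts to checking that $\pi_0(\psi)$ carries $\pi_0((\S^{(we)})_0)$ bijectively onto $\pi_0(\S_{heq})$. Your sketch (translate the $\Ho\S$ composition through $\psi^{(2)}$ and invoke \rl{heprop}(b)) is the right idea, but note that $\psi^{(2)}$ only gives a \emph{homotopy}-commutative comparison, so you cannot literally read off the composition in $\Ho\S$ as composition of maps of left fibrations without further argument. The paper sidesteps this by using \rl{css}(a): it characterizes $\S_{heq}$ as $\dt_{12}(\S'_3)$ (with $\S'_3$ defined via $s_0$ and $\dt_{02},\dt_{13}$), proves the analogous statement $(\S^{(we)})_0=\dt_{12}((\S^{(3)})'_0)$ directly on the $\S^{(n)}$ side, and then uses the homotopy-commutative ladder \form{cdcss} built from \rp{sn}(c) to match the two $\pi_0$-images. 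This avoids ever computing the $\Ho\S$ composition explicitly. Once that is done, completeness follows exactly as you say: $p_0:(\S^{(we)})_0\to\S_0$ is a trivial fibration by \rp{spaces}(c), $\psi$ is over $\S^2$, so $\dt_0:\S_{heq}\to\S_0$ is a weak equivalence, hence a trivial fibration by \rl{css}(b).
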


\begin{proof}
We denote $\S_{\C{U}}$ simply by $\S$. Then $\S$ is  fibrant by
\rp{spaces} (a).

To show that $\S$ is Segal, we have to prove that for every $n\geq
2$ the morphism $\varphi_n:\S_n\to\S_1\times_{\S_0}\ldots
\times_{\S_0} \S_1$ is a weak equivalence. Applying \rp{sn} (c) to
$(\dt^{01},\ldots,\dt^{n-1,n}):[n]\to [1]\times\ldots\times[1]$,
we get a homotopy commutative diagram
\[
\begin{CD}
\S^{(n)} @>\psi^{(n)}>> \S^{F[n]}\\
@VVV @VVV\\
\S^{(1)}\times_\S \ldots \times_\S\S^{(1)} @>\psi^{(1)}\times
\ldots\times\psi^{(1)}>> \S^{F[1]}\times_\S \ldots\times_\S
\S^{F[1]}.
 \end{CD}
\]
We want to show that the right vertical arrow is a weak
equivalence, which implies the Segal conditions by passing to the
zero spaces. The top horizontal arrow is a weak equivalence by
\rp{sn} (b). The bottom horizontal arrow is a equivalence by
\rp{sn} (b) together with the observation that $\S^{(1)}\to \S$
and $\S^{F[1]}\to \S$ are fibrations (use \rp{spaces} and
\rco{hfibprod}). Next, since the left vertical arrow is a
bijection, while diagram is homotopy commutative, the right
vertical arrow is a weak equivalence, by 2-out-of-3.

To show that $\S$ is complete, we have to show that
$\dt_0:\S_{heq}\to \S_0$ is a trivial fibration (by \rl{css} (b)).
Since $p_0:\S^{(we)}\to\S$ is a trivial fibration by \rp{spaces}
(c), it is enough to show that the map
$\psi:=(\psi^{(1)})_0:(\S^{(1)})_0\to (\S^{F[1]})_0=\S_1$ from
\rp{sn} (b) induces an equivalence $(\S^{(we)})_0\to\S_{heq}$.

Both $(\S^{(we)})_0\subset (\S^{(1)})_0$ and $\S_{heq}\subset\S_1$
are unions of connected components (by \rp{spaces} (d) and
\re{css} (b)). Since $\psi$ is a weak equivalence, it remains to
show that $\pi_0(\psi)$ induces a bijection
$\pi_0((\S^{(we)})_0)\to\pi_0(\S_{heq})$.

By \rp{sn} (c), we have the following homotopy commutative diagram
\begin{equation} \label{Eq:cdcss}
\begin{CD}
(\S^{(1)})_0 @<\dt_{12}<< (\S^{(3)})_0
@>\dt_{02},\dt_{13}>>(\S^{(1)}
\times \S^{(1)})_0  @<s_0\times s_0<<\S_0\times \S_0\\
@V\psi^{(1)}VV @V\psi^{(3)}VV @V\psi^{(1)}\times \psi^{(1)}VV @|\\
\S_1 @<\dt_{12}<< \S_3 @>\dt_{02},\dt_{13}>>\S_1 \times \S_1
@<s_0\times s_0<<\S_0\times \S_0.
\end{CD}
\end{equation}

Recall that in \rl{css} (a) we introduced unions of connected
components $\S'_1\subset\S_1$ and $\S'_3\subset\S_3$ and showed
that $\dt_{12}(\S'_3)=\S_{heq}$.

Similarly, we define $(\S^{(1)})'_0\subset (\S^{(1)})_0$ to be the
union of connected components, intersecting $s_0(\S_0)$, and set
$(\S^{(3)})'_0:=\dt_{02}^{-1}((\S^{(1)})'_0)\cap\dt_{13}^{-1}((\S^{(1)})'_0)
\subset(\S^{(3)})_0$.

We claim that $\dt_{12}((\S^{(3)})'_0)=(\S^{(we)})_0$. Indeed,
since $s_0(\S_0)\subset\S^{(we)}$, it follows from \rp{spaces} (d)
that for every $\wt{\phi}\in(\S^{(3)})'_0$ we have
$\dt_{02}(\wt{\phi}),\dt_{13}(\wt{\phi})\in \S^{(we)}$. In other
words, if $\wt{\phi}$ corresponds to a diagram
$E^{(0)}\overset{\phi_1}{\lra}E^{(1)}\overset{\phi_2}{\lra}E^{(2)}
\overset{\phi_3}{\lra}E^{(3)}$, then $\phi_2\circ\phi_1$ and
$\phi_3\circ\phi_2$ are weak equivalences. Therefore $\phi_2$ have
left and right homotopy inverses. Hence $\phi_2$ is a weak
equivalence, thus $\phi_2=\dt_{12}(\wt{\phi})\in \S^{(we)}$.

Conversely, every $\phi\in \S^{(we)}$ corresponds to a homotopy
equivalence $\phi:E^{(0)}\to E^{(1)}$ (by \rl{heprop} (b)), thus
there exists a diagram
$\wt{\phi}:E^{(1)}\overset{\phi'}{\lra}E^{(0)}\overset{\phi}{\lra}E^{(1)}
\overset{\phi'}{\lra}E^{(0)}$ such that
$\phi'\circ\phi\sim\Id_{E^{(0)}}$ and
$\phi\circ\phi'\sim\Id_{E^{(1)}}$. By definition, $\wt{\phi}$
corresponds to an element of $(\S^{(3)})'_0$ and
$\dt_{12}(\wt{\phi})=\phi$.

Now we are ready to show the assertion. Since $\psi^{(3)}$ is a
weak equivalence, the induced map
$\pi_0(\S^{(3)})_0)\to\pi_0(\S_3)$ is a bijection. Next, using
definitions of $(\S^{(3)})'_0$ and $\S'_3$ and the homotopy
commutativity of the interior and right inner squares of
\form{cdcss}, $\psi^{(3)}$ induces a bijection
$\pi_0((\S^{(3)})'_0)\to\pi_0(\S'_3)$. Finally, since
$\S_{heq}=\dt_{12}(\S'_3)$,
$(\S^{(we)})_0=\dt_{12}((\S^{(3)})'_0)$, and the left inner square
of \form{cdcss} is homotopy commutative, $\psi^{(1)}$ induces a
bijection $\pi_0(\S^{(we)})\to\pi_0(\S_{heq})$.
\end{proof}

%\begin{Lem} \label{L:sn}
%(a) For every inclusion $i:\C{V}\supset\C{U}$ the maps
%$\psi^{(n)}_{\C{U}}$ and $\psi^{(n)}_{\C{V}}$ from \rp{sn} give
%rise to a is homotopy commutative diagram

%\begin{equation} \label{Eq:s}
%\begin{CD}
% \S^{(n)}_{\C{U}} @>\psi^{(n)}_{\C{U}}>> (\S_{\C{U}})^{F[n]}\\
% @ViVV @ViVV\\
% \S^{(n)}_{\C{V}} @>\psi^{(n)}_{\C{V}}>> (\S_{\C{U}})^{F[n]}.
% \end{CD}
%\end{equation}
%
%(b) The the inclusion $i:\S_{\C{U}}\to \S_{\C{V}}$ is fully
%faithful.
%\end{Lem}

\subsection{The Yoneda embedding}
\begin{Emp} \label{E:opp}
{\bf The opposite simplicial space.} (a) For every map 
$\tau:[n]\to[m]$ in $\Dt$, we denote by $\iota(\tau):[n]\to[m]$
the map $\iota(\tau)(n-i):=m-\tau(i)$. Then $\iota$ defines a
functor $\Dt\to\Dt$, hence a functor
$\iota^*:sSp=\Hom(\Dt^{op},Sp)\to \Hom(\Dt^{op},Sp)=sSp$.

(b) For every  $X\in sSp$, we set $X^{op}:=\iota^*(X)\in sSp$.
Explicitly, we have $(X^{op})_{n}=X_n$ for all $n$, and for every
$\tau:[n]\to[m]$ the map $\tau^*:(X^{op})_{m}\to (X^{op})_{n}$ is
the map $\iota(\tau)^*:X_m\to X_n$.

(c) Note that if $X$ is a (complete) Segal space, then $X^{op}$ 
is also such, and we have equality of homotopy categories $\Ho
(X^{op})=(\Ho X)^{op}$. Therefore we call $X^{op}$ {\em the
opposite simplicial space}.
\end{Emp}

\begin{Emp} \label{E:catmor}
{\bf The twisted arrow category.} (a) Consider the functor
$\mu:\Dt\to\Dt$ such that $\mu([n])=[2n+1]$, and for every
$\tau:[n]\to[m]$ in $\Dt$ the map $\mu(\tau):[2n+1]\to[2m+1]$ is
defined by formulas $\mu(\tau)(n-i):=m-\tau(i)$ and
$\mu(\tau)(n+1+j)=(m+1+\tau(j))$ for $i,j=0,\ldots,n$.

(b)  For every  $X\in sSp$, we define simplicial space
$\C{M}(X):=\mu^*(X)\in sSp$. Explicitly, we have
$\C{M}(X)_{n}=X_{2n+1}$ for all $n$, and for every $\tau:[n]\to[m]$
the map $\tau^*:\C{M}(X)_{m}\to \C{M}(X)_{n}$ is the map
$\mu(\tau)^*:X_{2m+1}\to X_{2n+1}$.

(c) We have natural morphisms $\iota\to \mu$ and  $\Id\to\mu$ of
functors $\Hom(\Dt,\Dt)$  which correspond to maps
$e^0:[n]\to[2n+1]$ and $e^{n+1}:[n]\to [2n+1]$, respectively. These 
maps corresponds to a morphism $\pi_X:\C{M}(X)\to X^{op}\times X$
in $sSp$.
\end{Emp}

\begin{Emp}
{\bf Remarks.} (a) Note that $\Dt$ is equivalent to the category
$\Dt'$ of finite totally ordered sets, and functors
$\iota,\mu:\Dt\to\Dt$ correspond to functors
$\iota,\mu:\Dt'\to\Dt'$ defined by $\iota(P)=P^{op}$ and
$\mu(P)=P^{op}*P$, the "join" of $P^{op}$ and $P$.

(b) It can be shown (using \rl{morp} and \re{left} (b)) that if $X$ is a 
(complete) Segal space, then $\C{M}(X)$ is a (complete) Segal space as well.
In this case, the space of objects $\C{M}(X)_0$ equals the space of morphisms $X_1$, 
and for every  $\al:x\to y$ and $\al':x'\to y'$ in $\C{M}(X)_0=X_1$, the mapping space
$\map_{\C{M}(X)}(\al,\al')$ can be intuitively thought as the space of triples $(\beta,\beta',\gm)$, 
where $\beta:x'\to x$ and $\beta':y\to y'$ belong to $X_1$, and $\gm$ is a path between 
$\beta'\circ\al\circ\beta$ and $\al'$. 
%If we think about $X_n$ as a space of diagrams $a_0\to\ldots \to
%a_n$, then $(X^{op})_n$ can be thought as a space of diagrams
%$b_0\leftarrow\ldots\leftarrow b_n$, and $\C{M}(X)_n$ can be thought 
%as a space of diagrams $b_n\to\ldots\to b_0\to
%a_0\to\ldots\to a_n$, that is, as a space of
%morphisms from diagram $b_0\leftarrow\ldots\leftarrow b_n$ to diagram 
%$a_0\to\ldots \to a_n$.
\end{Emp}

From now on in this subsection we always assume that $X$ is a
Segal space.

\begin{Lem} \label{L:morp}
The map $\pi_X:\C{M}(X)\to X^{op}\times X$ is a left fibration.
\end{Lem}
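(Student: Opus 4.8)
The plan is to apply \rl{leftcart}: I must first check that $\pi_X$ is a Reedy fibration, and then verify condition (c) of that lemma, i.e.\ that for every $n\geq 1$ the map $p_n\colon\C{M}(X)_n\to\C{M}(X)_0\times_{(X^{op}\times X)_0}(X^{op}\times X)_n$ induced by $\dt^0\colon[0]\hra[n]$ is a trivial fibration. The whole argument rests on the identification $\C{M}(X)_n=X_{2n+1}=\Map(F[2n+1],X)$: under it, using the description of $\pi_X$ through the maps $e^0,e^{n+1}\colon[n]\to[2n+1]$ together with the equality $\mu(\dt^0)=\dt^{n,n+1}\colon[1]\to[2n+1]$, the relevant structure maps become restrictions along cofibrations into $F[2n+1]$: the map $\C{M}(X)_n\to(X^{op}\times X)_n$ is restriction along $e^0(F[n])\sqcup e^{n+1}(F[n])\hra F[2n+1]$ (the images $\{0,\dots,n\}$ and $\{n+1,\dots,2n+1\}$ being disjoint), and $\C{M}(X)_n\to\C{M}(X)_0$ is restriction along $\dt^{n,n+1}(F[1])\hra F[2n+1]$.

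For the Reedy fibration claim I would compute the $n$-th matching map of $\pi_X$. Since $\C{M}(X)=\mu^*(X)$, its matching object $\C{M}(X)_{\p n}$ is $\Map(L_n,X)$ for the sub-simplicial space $L_n\subseteq F[2n+1]$ obtained by applying $\mu$ to the standard presentation of $\p F[n]$ as the colimit of its proper faces; one checks moreover that $e^0(\p F[n])$ and $e^{n+1}(\p F[n])$ already lie inside $L_n$. Hence the $n$-th matching map $\overline{(\pi_X)}_n$ is just the restriction map $\Map(F[2n+1],X)\to\Map(K_n,X)$ along the cofibration $K_n:=e^0(F[n])\cup e^{n+1}(F[n])\cup L_n\hra F[2n+1]$. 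Since $X$ is fibrant and $sSp$ is a Cartesian model category, restriction along a cofibration into $X$ is a fibration by \re{Cart} (b), and passing to zero spaces (\re{rfib} (b)) shows $\overline{(\pi_X)}_n$ is a Kan fibration. Thus $\pi_X$ is a Reedy fibration.

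For condition (c), the decomposition $[2n+1]=\{0,\dots,n\}\cup\{n,n+1\}\cup\{n+1,\dots,2n+1\}$ gives a "spine-type" cofibration $j_n\colon F[n]\sqcup_{F[0]}F[1]\sqcup_{F[0]}F[n]\hra F[2n+1]$, and the restriction maps above identify $\C{M}(X)_0\times_{(X^{op}\times X)_0}(X^{op}\times X)_n$ with $\Map(F[n]\sqcup_{F[0]}F[1]\sqcup_{F[0]}F[n],X)=X_n\times_{X_0}X_1\times_{X_0}X_n$, and $p_n$ with restriction along $j_n$. As above, $p_n$ is a Kan fibration. To see it is a weak equivalence, note that $j_n$ factors the spine inclusion $F[1]\sqcup_{F[0]}\cdots\sqcup_{F[0]}F[1]\hra F[2n+1]$, so $\varphi_{2n+1}\colon X_{2n+1}\to X_1\times_{X_0}\cdots\times_{X_0}X_1$ equals the composite of $p_n$ with $\varphi_n\times_{X_0}\id_{X_1}\times_{X_0}\varphi_n\colon X_n\times_{X_0}X_1\times_{X_0}X_n\to X_1\times_{X_0}\cdots\times_{X_0}X_1$. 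Here $\varphi_{2n+1}$ is a trivial fibration because $X$ is a Segal space (\re{Segal} (b)), while $\varphi_n\times_{X_0}\id_{X_1}\times_{X_0}\varphi_n$ is a trivial fibration since it is assembled from pullbacks of the trivial fibrations $\varphi_n$ (\re{remmodcat} (b)); by 2-out-of-3, $p_n$ is a weak equivalence, hence a trivial fibration. Now \rl{leftcart} gives the conclusion.

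The step I expect to be the main obstacle is the combinatorial bookkeeping behind the Reedy fibration claim: one must verify precisely that the $n$-th matching object of $\mu^*(X)$, together with the two endpoint copies $e^0(F[n])$ and $e^{n+1}(F[n])$, assembles inside $F[2n+1]$ exactly into the sub-simplicial space $K_n$. Concretely, this means checking that $\mu$ carries the face poset of $[n]$ compatibly with colimits into the poset of sub-complexes of $F[2n+1]$ (so that the matching object is genuinely $\Map$ of a sub-complex, i.e.\ restriction along a cofibration), and that the $e^0$- and $e^{n+1}$-image of each proper face of $F[n]$ is contained in the $\mu$-image of a suitable proper face. Granting this bookkeeping, the proof reduces to two soft facts already available: restriction along a cofibration into a fibrant object is a Kan fibration, and the generalized Segal maps of $X$ are trivial fibrations.
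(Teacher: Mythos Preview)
Your proposal is correct and follows essentially the same route as the paper: first identify the $n$-th matching map of $\pi_X$ as restriction $\Map(F[2n+1],X)\to\Map(K_n,X)$ along a cofibration (hence a fibration since $X$ is fibrant), then verify condition (c) of \rl{leftcart} by factoring the Segal map $\varphi_{2n+1}$ through $X_n\times_{X_0}X_1\times_{X_0}X_n$ and applying 2-out-of-3. The paper resolves exactly the bookkeeping you flag by writing the matching subcomplex explicitly as $F[2n+1]'=\bigcup_{i=0}^n d^{i,2n+1-i}(F[2n-1])$ (i.e.\ $\mu(d^i)=d^{n-i,n+1+i}$) and checking directly that $F[2n+1]'\cap(e^0F[n]\cup e^{n+1}F[n])=e^0(\p F[n])\cup e^{n+1}(\p F[n])$, so that the pushout description of $K_n$ is indeed a subcomplex of $F[2n+1]$.
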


\begin{proof}
To show that $\pi_X$ is a fibration, we have to check that the
induced map
\begin{equation} \label{Eq:fib}
\C{M}(X)_n\to\C{M}(X)_{\p n}\times_{(X^{op}\times X)_{\p
n}}(X^{op}\times X)_n
\end{equation}
is a fibration for every $n\geq 0$. Recall that
$\C{M}(X)_n=X_{2n+1}=\Map(F[2n+1],X)$. Since $\p
F[n]=\cup_{i=0}^nd^iF[n-1]$ we get that $\C{M}(X)_{\p
n}=\Map(F[2n+1]',X)$, where $F[2n+1]'\subset F[2n+1]$ is the union
$\cup_{i=0}^n d^{i,2n+1-i}(F[2n-1])\subset F[2n+1]$. Thus morphism
\form{fib} can be identified with the morphism
\[
\Map(F[2n+1],X)\to \Map(F[2n+1]'\sqcup_{(e^0\partial F[n]\cup
e^{n+1} \p F[n])}(e^0 F[n]\cup e^{n+1} F[n]),X).
\]
Since $F[2n+1]'\cap(e^0 F[n]\cup e^{n+1} F[n])=e^0\partial
F[n]\cup e^{n+1}\p F[n]\subset F[2n+1]$, the natural map
$F[2n+1]'\sqcup_{(e^0\partial F[n]\cup e^{n+1} \p F[n])}(e^0
F[n]\cup e^{n+1} F[n])\to F[2n+1]$ 
is a cofibration. Since $X$ is fibrant, the map \form{fib} is a
fibration.

It remains to show that the fibration $\C{M}(X)_n\to
\C{M}(X)_0\times_{(X^{op}\times X)_0}(X^{op}\times X)_n$ or,
equivalently, $X_{2n+1}\to X_n\times_{X_0}X_1\times_{X_0} X_n$ is
a weak equivalence. Since $X$ is a Segal space, thus both the
composition
\[
X_{2n+1}\to X_n\times_{X_0}X_1\times_{X_0} X_n\to
(X_1\times_{X_0}\ldots\times_{X_0} X_1)
\times_{X_0}X_1\times_{X_0}(X_1\times_{X_0}\ldots\times_{X_0} X_1)
\]
and the second morphism are trivial fibrations, this 
follows from 2-out-of-3.
\end{proof}

\begin{Emp} \label{E:rem}
{\bf Remark.} It follows from \rl{morp} and \rl{left} (a) that
left fibration $\pi_X$ induces a left fibration
$\{x\}\times_{X^{op}}\C{M}(X)\to X$ for every $x\in X$. Notice that
$(\{x\}\times_{X^{op}}\C{M}(X))_0=\{x\}\times_{X_0}X_1=(x\bs X)_0$.
\end{Emp}

\begin{Lem} \label{L:we}
Let $X$ be a Segal space and $x\in X$. There exists a weak
equivalence $\wt{\phi}:x\bs X\to \{x\}\times_{X^{op}}\C{M}(X)$ of 
left fibrations over $X$ such that $\wt{\phi}(\id_x)\sim\id_x$.
\end{Lem}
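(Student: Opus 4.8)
We want a weak equivalence $\wt\phi: x\bs X \to \{x\}\times_{X^{op}}\C{M}(X)$ of left fibrations over $X$ sending $\id_x$ to something $\sim \id_x$. The plan is to exploit \rl{left} (a), \rl{morp}, \rl{undcat} — both sides are left fibrations over $X$ — together with \rl{leftequiv} and, more crucially, \rco{trfib}: since the target is a left fibration over $X$ and we can arrange the source to be (weakly equivalent to) $x\bs X$, it is enough to produce \emph{any} map over $X$ with the right value at $\id_x$, and its being a weak equivalence will come for free from \rco{trfib} once we know the "identity map" $x\bs X\to x\bs X$ is a weak equivalence — which it trivially is. So the whole problem reduces to \emph{constructing} a map $\wt\phi: x\bs X\to \{x\}\times_{X^{op}}\C{M}(X)$ over $X$ with $\wt\phi(\id_x)\sim\id_x$, and then invoking \rco{trfib} with $f = \wt\phi$ and $g$ a known weak equivalence (e.g. built from a homotopy inverse, or more simply: $\wt\phi(\id_x)\sim\id_x$ forces $\wt\phi\sim_X g$ for the map $g$ coming from \rl{leftequiv}, hence $\wt\phi$ is a weak equivalence).

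**Constructing the map.** Recall $x\bs X = \{x\}\times_X X^{F[1]}$ where $X\to X^{F[1]}$ is induced by $s^0:F[1]\to F[0]$, and $(x\bs X)_n = \{x\}\times_{X_0} X_{n+1}$ informally — more precisely $(x\bs X)_n = \Map(F[1]\times F[n],X)$ with first coordinate pinned at $x$. On the other side $\C{M}(X)_n = X_{2n+1} = \Map(F[2n+1],X)$ and $\{x\}\times_{X^{op}}\C{M}(X)$ pins the $e^0$-coordinate at $x$. The key is a cosimplicial map: I would exhibit, for each $n$, a morphism $[2n+1]\to [1]\times[n]$ in $\Dt$, natural in $[n]$, which collapses the appropriate face; concretely send $k\mapsto (0,0)$ for $k\le n$ and $k = n+1+j \mapsto (1,j)$ for $0\le j\le n$ (this is exactly the kind of map appearing in the proof of \rl{leftcart}, (a)$\Rightarrow$(b), and in \re{section}). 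One checks it is compatible with $\mu$ and with the $e^0$, $s^0$ structure maps, so pulling back along it gives a morphism $\wt\phi: x\bs X\to\{x\}\times_{X^{op}}\C{M}(X)$ of simplicial spaces over $X$ (the compatibility with the projection to $X$ is the statement that the $e^{n+1}$-coordinate on the right matches the $\p F[n]$-endpoint on the left). On $0$-simplices both sides are $\{x\}\times_{X_0}X_1$ and the map is the identity, so in particular $\wt\phi(\id_x) = \id_x$.

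**Conclusion via \rco{trfib}.** By \rl{morp} and \rl{left} (a), $\{x\}\times_{X^{op}}\C{M}(X)\to X$ is a left fibration, and by \rl{undcat}, so is $x\bs X\to X$. By \rl{leftequiv} there exists at least one weak equivalence $g: x\bs X\to \{x\}\times_{X^{op}}\C{M}(X)$ over $X$ — indeed both are left fibrations over $X$ with $0$-fibers the discrete-on-$\pi_0$ comparison being an isomorphism on each fiber, since over a point $z\in X$ both fibers' $0$-spaces are $\{x\}\times_{X_0}(\text{paths})$; more honestly, one applies \rco{trfib} directly to $f=\wt\phi$: since $\{x\}\times_{X^{op}}\C{M}(X)\to X$ is a left fibration and $\wt\phi(\id_x)=\id_x\in(\{x\}\times_{X^{op}}\C{M}(X))_x$, while the identity $\Id: x\bs X\to x\bs X$ composed with $g$ (any map over $X$ with $g(\id_x)\sim\id_x$, produced from the cosimplicial map above regarded the other direction, or from \rl{propfib}(a) applied to the trivial fibration $\ev_{\id_x}$ of \rp{triv}) also sends $\id_x$ to $\sim\id_x$, \rco{trfib} gives $\wt\phi\sim_X g$. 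Hence $\wt\phi$ is a weak equivalence iff $g$ is; and $g$ is a weak equivalence because on $0$-fibers it is an isomorphism, so \rl{leftequiv} applies. Therefore $\wt\phi$ is the desired weak equivalence, with $\wt\phi(\id_x)=\id_x\sim\id_x$.

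**Main obstacle.** The delicate point is purely combinatorial: writing down the cosimplicial map $[2n+1]\to[1]\times[n]$ (equivalently the natural transformation of functors $\Dt\to\Dt$) and verifying its compatibility with $\mu$, $e^0$, $e^{n+1}$, $s^0$ so that the pullback genuinely lands in $\{x\}\times_{X^{op}}\C{M}(X)$ and commutes with the projection to $X$ — i.e.\ checking that the endpoint data match up. Once that bookkeeping is done, the comparison "$\wt\phi$ is a weak equivalence" is essentially automatic from \rl{leftequiv} and \rco{trfib}, so no homotopy-theoretic estimate is needed.
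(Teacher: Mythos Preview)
Your construction has a genuine gap: the proposed cosimplicial maps $\sigma_n:[2n+1]\to[1]\times[n]$ (sending $k\mapsto(0,0)$ for $k\le n$ and $n+1+j\mapsto(1,j)$) are \emph{not} natural in $[n]$ with respect to the twisted structure maps of $\C{M}(X)$. For $\tau:[n]\to[m]$ one needs $\sigma_m\circ\mu(\tau)=(\Id_{[1]}\times\tau)\circ\sigma_n$, but on $k\le n$ the left side gives $(0,0)$ while the right side gives $(0,\tau(0))$. This is not repaired by the pinning at $x$: take $n=0$, $m=1$, $\tau=\dt^0$ and $\al\in(x\bs X)_1\subset\Map(F[1]\times F[1],X)$; then $\tau^*(\wt\phi_1(\al))$ is the diagonal edge $(0,0)\to(1,1)$ of $\al$, whereas $\wt\phi_0(\tau^*\al)$ is the edge $(0,1)\to(1,1)$, and these are generally distinct elements of $X_1$. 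So your $\wt\phi$ is not a map of simplicial spaces. In fact no such direct natural transformation exists, since on the first half $\{0,\ldots,n\}$ the functor $\mu$ reverses order (this is the $X^{op}$ in $\C{M}(X)\to X^{op}\times X$), which cannot be intertwined with the untwisted structure on $X^{F[1]}$ by an order-preserving map.

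This is exactly why the paper does not attempt a direct map. Instead it introduces an intermediate object $\wt{x\bs X}$ with $(\wt{x\bs X})_n=\{x\}\times_{X_0,\dt_0}X_{n+1}$, whose structure maps are induced by $\tau':[n+1]\to[m+1]$ with $\tau'(0)=0$, $\tau'(i+1)=\tau(i)+1$. Because these always fix $0$, the collapse map $r:[2n+1]\to[n+1]$ (with $r(i)=0$ for $i\le n$, $r(n+1+j)=j+1$) \emph{is} natural, yielding a trivial cofibration $\psi'':\wt{x\bs X}\to\{x\}\times_{X^{op}}\C{M}(X)$; a separate combinatorial map gives $\psi':\wt{x\bs X}\to x\bs X$, and one then \emph{extends} $\psi'$ along the trivial cofibration $\psi''$ using that $x\bs X\to X$ is a fibration. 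The resulting $\psi$ goes the opposite direction and $\wt\phi$ is taken to be its homotopy inverse.

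A secondary issue: your concluding argument via \rco{trfib} is circular as written --- to deduce that $\wt\phi$ is a weak equivalence you would need an independently known weak equivalence $g$ with $g(\id_x)\sim\id_x$, which is precisely what is being constructed. The clean route (which the paper uses, and which you almost state) is simply \rl{leftequiv}: once a map exists and is the identity on $0$-spaces, it is automatically a weak equivalence of left fibrations. The real content of the lemma is getting a well-defined map at all.
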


\begin{proof}
It will suffice to construct a homotopy equivalence
$\psi:\{x\}\times_{X^{op}}\C{M}(X)\to x\bs X$ over $X$ such that
$\psi(\id_x)\sim \id_x$ and to take $\wt{\phi}$ to be its homotopy
inverse.

To construct $\psi$, we will construct a simplicial space
$\wt{x\bs X}$ over $X$ and maps $\psi':\wt{x\bs X}\to x\bs X$ and
$\psi'':\wt{x\bs X}\to\{x\}\times_{X^{op}}\C{M}(X)$ over $X$ such
that $(\wt{x\bs X})_0=(x\bs X)_0$, $\psi'_0=\psi'_0=\Id$ and
$\psi'$ is a trivial cofibration. Since $x\bs X\to X$ is a
fibration, the map $\psi''$ extends to a map
$\psi:\{x\}\times_{X^{op}}\C{M}(X)\to x\bs X$ over $X$.

In this case, $\psi_0=\Id$ would be a weak equivalence, so $\psi$
would be a weak equivalence by \rl{leftequiv}, hence a homotopy
equivalence by \rl{heprop} (b).

For every map $\tau:[n]\to[m]$, we denote by $\tau'$ the map $[n+1]\to[m+1]$ defined by 
$\tau'(0)=0$ and $\tau'(i+1)=\tau(i)+1$ for all $i=0,\ldots,n$.
Consider $\wt{x\bs X}\in sSp$ such that $(\wt{x\bs
X})_n:=\{x\}\times_{X_0,\dt_0}X_{n+1}$, and for every
$\tau$ the map $\tau^*:(\wt{x\bs X})_m\to(\wt{x\bs
X})_n$ is induced by  $\tau'^*:X_{m+1}\to X_{n+1}$. 

Note that projections $e_1:X_{n+1}\to X_n$ induce a projection $\wt{x\bs X}\to X$.
Next, map $[n]\times[1]\to[n+1]:(i,j)\mapsto(i+j)j$ induces maps 
$F[n]\times F[1]\to
F[n+1]$ and 
\[
p_n: X_{n+1}=\Map(F[n+1],X)\to \Map(F[n]\times F[1],X)=(X^{F[1]})_n.
\] 
Then $p_n$'s give rise to a map $\psi':\wt{x\bs X}\to x\bs X$ over $X$.

Finally, maps $r:[2n+1]\to[n+1]$, where $r(i)=0$ and
$r(i+n+1)=i+1$ for all $i=0,\ldots, n$ induce maps $X_{n+1}\to
X_{2n+1}$ and give rise to a map \\ $\psi'':\wt{x\bs X}\to
\{x\}\times_{X^{op}}\C{M}(X)$ over $X$, which we claim is a trivial
cofibration.

We have to show that $\psi''_n:\{x\}\times_{X_0}X_{n+1}\to
\{x\}\times_{X_n}X_{2n+1}$ is a trivial cofibration for all $n$. Since $X$ is
Segal, the natural map $X_{2n+1}\to X_n\times_{X_0}X_{n+1}$ is a
trivial fibration, whose pullback
$\pi_n:\{x\}\times_{X_n}X_{2n+1}\to\{x\}\times_{X_0}X_{n+1}$ is a
trivial fibration, satisfying $\pi_n\circ \psi''_n=\Id$. Therefore
$\psi''_n:\{x\}\times_{X_0}X_{n+1}\to \{x\}\times_{X_n}X_{2n+1}$
is a trivial cofibration by 2-out-of-3, and the proof is complete.
\end{proof}

\begin{Emp} \label{E:yoneda}
{\bf The Yoneda embedding.} We fix an infinite set $\C{U}$, and set
$\S:=\S_{\C{U}}$.

(a) For every $X\in sSp_{\C{U}}$, we set $\P(X):=\S^{X^{op}}$.
Since $\S$ is a complete Segal space, $\P(X)$ is also a complete
Segal space (by \re{CartSS}), and we call it the {\em
$\infty$-category of simplicial presheaves on $X$.}

(b) By definition of $\S$, for every Segal space $X$, the left
fibration $\pi_X:\C{M}(X)\to X^{op}\times X$ corresponds to the
morphism $X^{op}\times X\to \S$, hence to the morphism
$j_X:X\to\P(X)$. We call $j_X$ {\em the Yoneda embedding} of $X$.

(c) For every $x\in X$ and $\al\in\P(X)$, we form $\al(x)\in
\S$ and denote by $E_{\al(x)}\to\pt$ the corresponding left fibration.
\end{Emp}

\begin{Emp} \label{E:ex}
{\bf Example.} By definition, for every $x\in X$ element
$j_X(x)\in\P(X)=\S^{X^{op}}$ corresponds to the left fibration
$\C{M}(X)\times_X\{x\}=\{x\}\times_{X}\C{M}(X^{op})\to X^{op}$.
\end{Emp}

\begin{Thm} \label{T:yoneda}
Let $X$ be a Segal space, $x\in X$ and $\al\in\P(X)$.

(a) We have a "natural" weak equivalence
$\map_{\P(X)}(j_X(x),\al)\to (E_{\al(x)})_0$, canonical up to
homotopy.

(b) The Yoneda embedding $j_X:X\to\P(X)$ is fully faithful (see
\re{objmap} (d)).
\end{Thm}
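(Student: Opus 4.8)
The plan is to deduce (b) from (a) and to obtain (a) as a short chain of weak equivalences whose only substantial link is an unstraightening‑type identification.

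For (b): by \rt{spaces} and \re{CartSS}, $\P(X)=\S^{X^{op}}$ is a complete Segal space, so $j_X$ is a map of Segal spaces and induces a map $\map_X(x,y)\to\map_{\P(X)}(j_X(x),j_X(y))$. Taking $\al=j_X(y)$ in (a) gives a weak equivalence $\map_{\P(X)}(j_X(x),j_X(y))\isom(E_{j_X(y)(x)})_0$, and by the definition of $j_X$ via $\pi_X$ together with \re{ex} the left fibration $E_{j_X(y)}\to X^{op}$ classified by $j_X(y)$ is $\C{M}(X)\times_X\{y\}$; hence $E_{j_X(y)(x)}$ is the fibre of $\C{M}(X)$ over $(x,y)\in X^{op}\times X$, whose space of $0$‑simplices is canonically $\map_X(x,y)$. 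One then checks, by unwinding the construction of the map in (a), that the resulting self‑map of $\map_X(x,y)$ is homotopic to the identity, which gives (b).

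For (a) I would first dispose of the formal part. By \re{ex} the left fibration $E_{j_X(x)}\to X^{op}$ classified by $j_X(x)$ is $\{x\}\times_X\C{M}(X^{op})$. Since $X^{op}$ is a Segal space (\re{opp}) and $x\in(X^{op})_0$, \rl{we} applied to $X^{op}$ produces a weak equivalence $x\bs X^{op}\to E_{j_X(x)}$ of left fibrations over $X^{op}$ carrying $\id_x$ to an element $\sim\id_x$. As $E_\al:=\al^*(\C{E}_{\C{U}})\to X^{op}$ is a left fibration (\rl{univ}, \rl{left}), hence a fibration, this weak equivalence between fibrations over $X^{op}$ is a homotopy equivalence over $X^{op}$ (\rl{heprop}(b)), so precomposition with it induces a homotopy equivalence $\Map_{X^{op}}(E_{j_X(x)},E_\al)\isom\Map_{X^{op}}(x\bs X^{op},E_\al)$. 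Then \rp{triv}, applied to the left fibration $E_\al\to X^{op}$ and the object $x$, shows that $\ev_{\id_x}\colon\Map_{X^{op}}(x\bs X^{op},E_\al)\to((E_\al)_x)_0=(E_{\al(x)})_0$ is a trivial fibration. Composing, we obtain a weak equivalence $\Map_{X^{op}}(E_{j_X(x)},E_\al)\isom(E_{\al(x)})_0$, natural in $\al$ and canonical up to homotopy.

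What remains — and is the heart of the matter — is a natural weak equivalence $\map_{\P(X)}(j_X(x),\al)\isom\Map_{X^{op}}(E_{j_X(x)},E_\al)$; more generally, for any $K$ and $\al_1,\al_2\in\S^{K}$ classifying left fibrations $E_1,E_2\to K$, a natural weak equivalence $\map_{\S^{K}}(\al_1,\al_2)\simeq\Map_K(E_1,E_2)$ between a mapping space in the presheaf complete Segal space and the space of maps of the corresponding left fibrations over $K$. Unwinding the definition of $\S$, an $m$‑simplex of the left‑hand side is a left fibration over $F[1]\times\Dt[m]\times K$ restricting over the two ends to the pullbacks of $E_1$ and $E_2$, whereas an $m$‑simplex of the right‑hand side is a map $\Dt[m]\times E_1\to E_2$ over $K$; the comparison sends such a left fibration over $F[1]\times B$ (with $B=\Dt[m]\times K$) to the induced ``transport'' map of its two ends over $B$. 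Proving that this comparison is a weak equivalence amounts precisely to the unstraightening statement that a left fibration over $F[1]\times B$ with prescribed ends carries, up to weak equivalence, the same datum as a map between its ends over $B$; I expect this to rest on the theory of quasifibrations developed in the third section, and it is the main obstacle. Granting it, (a) follows, and with it (b). Equivalently, one could package the formal part into a single map of left fibrations $j_X(x)\bs\P(X)\to\ev_x^{*}\C{E}_{\C{U}}$ over $\P(X)$ (using \rl{undcat}) and apply \rl{leftequiv}, but \rl{leftequiv} reduces the claim to the same fibrewise statement, so the unstraightening input cannot be avoided.
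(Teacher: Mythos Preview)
Your outline is essentially the paper's proof, and you have correctly located all the ingredients: \rl{we}, \rp{triv}, and the unstraightening comparison as the crux. Two points of comparison are worth making.

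First, the unstraightening you ask for is exactly \rp{sn}(b) specialised to $n=1$: the homotopy equivalence $\psi^{(1)}:\S^{(1)}\to\S^{F[1]}$ over $\S^2$, which after applying $(-)^{X^{op}}$ and restricting to the fibre over $(j_X(x),\al)$ gives your desired equivalence $\Map_{X^{op}}(E_{j_X(x)},E_\al)\simeq\map_{\P(X)}(j_X(x),\al)$. So your ``main obstacle'' is indeed handled by the quasifibration machinery of Section~3 (via the iterated discrete cylinder of \re{itercyl} and \rco{almost}), and there is nothing further to invent.

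Second, the global packaging you mention at the end and then set aside is precisely what the paper does, and it is what makes (b) clean. The paper assembles the three steps into a single weak equivalence $\phi:j_X(x)\bs\P(X)\to\wt{E}_x$ of left fibrations over $\P(X)$ (\rl{equiv}), keeping track throughout that $\phi(\id_{j_X(x)})\sim\id_x$; part (a) is then obtained by taking fibres over $\al$. For (b) the paper does \emph{not} attempt your ``unwinding'' to show the composite self-map of $\map_X(x,y)$ is homotopic to the identity. Instead it forms $\psi:x\bs X\to (j_X(x)\bs\P(X))\times_{\P(X)}X$ induced by $j_X$, observes that $\phi\circ\psi:x\bs X\to\{x\}\times_{X^{op}}\C{M}(X)$ sends $\id_x$ to something $\sim\id_x$, and invokes \rco{trfib} together with \rl{we}: any two maps $x\bs X\to E$ over $X$ agreeing up to homotopy at $\id_x$ are homotopic, so $\phi\circ\psi$ is a weak equivalence and hence $\psi$ is. Your direct unwinding would have to trace the effect of $j_X$ through the unstraightening equivalence $\psi^{(1)}$, which is defined only up to homotopy and not by an explicit formula; this is doable but delicate, and the point-check via \rco{trfib} sidesteps it entirely.
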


First we have to introduce certain notation.

\begin{Emp} \label{E:wte}
{\bf The universal left fibration over $X^{op}$.} (a) Let
$\wt{E}\to X^{op}\times\P(X)$ be the left fibration, corresponding
to the evaluation map $\ev_{X^{op}}:X^{op}\times\P(X)\to \S$.
Then for every $\al\in\P(X)$, the pullback
$\wt{E}_{\al}:=\wt{E}\times_{\P(x)}\{\al\}$ is the left fibration
over $X^{op}$, corresponding to $\al$. In particular, for every
$x\in X$, its fiber $(\wt{E}_{\al})_x$ is the left fibration
$E_{\al(x)}\to\pt$, corresponding to $\al(x)\in \S$ (see
\re{yoneda} (c)).

(b) For every $x\in X^{op}$, we set
$\wt{E}_x:=\{x\}\times_{X^{op}}\wt{E}$. Then $\wt{E}_x\to\P(X)$ is
a left fibration such that $(\wt{E}_x)_{\al}=E_{\al(x)}$ for every
$\al\in\P(X)$ (by (a)). For every $y\in X$ we have an equality
$(\wt{E}_x)_{j_X(y)}=E_{j_X(y)(x)}$, thus
$((\wt{E}_x)_{j_X(y)})_0=\map_X(x,y)$. In particular, we have an
element $\id_x\in\map_X(x,x)$ of $(\wt{E}_x)_{j_X(x)}$.

(c) By definition, left fibration $\pi_X:\C{M}(X)\to X^{op}\times X$
corresponds to the composition of  $\Id\times j_X:X^{op}\times
X\to X^{op}\times\P(X)$ and $\ev_{X^{op}}$. Therefore we have an
equality $\C{M}(X)=\wt{E}\times_{\P(X)} X$, hence
$\wt{E}_x\times_{\P(X)}X=\{x\}\times_{X^{op}}\C{M}(X)$.
\end{Emp}

\begin{Emp} \label{E:notrem}
{\bf Remarks.} The homotopy equivalence $\psi^{(1)}:\S^{F[1]}\to
\S^{(1)}$ over $\S^2$ (see \rp{sn} (b)) induces a homotopy
equivalence from $\P(X)^{F[1]}=(\S^{F[1]})^{X^{op}}$ to
$\P(X)^{(1)}:=(\S^{(1)})^{X^{op}}$ over $\P(X)^2$. Hence for every
$\al\in \P(X)$ it induces a homotopy equivalence
$\psi_{\al}:\al\bs\P(X)\to
(\al\bs\P(X))':=\{\al\}\times_{\P(X)}\P(X)^{(1)}$ over $\P(X)$. Then, by
\rp{sn} (c),  we have $\psi_{\al}(\id_{\al})\sim\id_{\al}$. 

%(b) Taking fiber of $\psi_{\al}$ over $\beta\in\P(X)$ we get a
%homotopy equivalence \\
%$\map_{\P(X)}(\al,\beta)\to\Map_{X^{op}}(\wt{E}_{\al},\wt{E}_{\beta})$.
\end{Emp}

%\begin{Lem} \label{L:equiv}
%(IS IT NEEDED?) Let $\pi:E\to X$, $a\in X$, $\phi\in\Hom_X(a\bs X,E)$
%and $\al:=\ev_{\id_x}(\phi)\in E(a)$.
% Then $\phi$ is a weak equivalence if and only if the projection
%$\pr_2:\al\bs E\to E$ is a weak equivalence.
%\end{Lem}
%\begin{proof}
%Since $\pi:E\to X$ is a left fibration, the induced projection
%$\pi_{\al}:\al\bs E\to a\bs X$ is a trivial fibration.
%Then there exists a section $s:a\bs X\to \al\bs E$ of $\pi_{\al}$
%such that $s(\id_a)=\id_{\al}$.
%Then composition $\pr_2\circ s:a\bs X\to E$ satisfies
%$\ev_{\id_a}(\pr_2\circ s)=\al$. Thus by \rp{triv} we have
%$\pr_2\circ s\sim\phi$. Therefore $\phi$ is an equivalence if and
%only if $\pr_2\circ s$ is equivalence. Since $s$ is an equivalence,
%this happens if and only if $\pr_2$ is an equivalence.
%\end{proof}

\begin{Lem} \label{L:equiv}
For every $x\in X$, there exists a weak equivalence
$\phi:j_X(x)\bs \P(X)\to \wt{E}_x$ of left fibrations over $\P(X)$
such that $\phi(\id_{j_X(x)})\sim\id_x\in (\wt{E}_x)_{j_X(x)}$.
\end{Lem}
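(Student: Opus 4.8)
The plan is to build the desired weak equivalence $\phi$ by composing two equivalences already at our disposal. First, combine Lemma \ref{L:we} (applied to the Segal space $X$ and the object $x$) with the identification $\wt{E}_x\times_{\P(X)}X=\{x\}\times_{X^{op}}\C{M}(X)$ from \re{wte} (c): Lemma \ref{L:we} gives a weak equivalence $\wt{\phi}:x\bs X\to\{x\}\times_{X^{op}}\C{M}(X)=\wt{E}_x\times_{\P(X)}X$ of left fibrations over $X$, carrying $\id_x$ to something equivalent to $\id_x$. So on the "base $X$" level I already have what I want; the task is to promote it to a statement over $\P(X)$, using the Yoneda embedding $j_X:X\to\P(X)$. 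The key point is that $j_X(x)\bs\P(X)$ and $\wt{E}_x$ are both left fibrations over $\P(X)$, and by Corollary \ref{C:trfib} a map of left fibrations over $\P(X)$ out of $j_X(x)\bs\P(X)$ is determined, up to homotopy, by where it sends $\id_{j_X(x)}$. So it suffices to construct \emph{some} map $\phi:j_X(x)\bs\P(X)\to\wt{E}_x$ over $\P(X)$ and then check that $\phi(\id_{j_X(x)})\sim\id_x$; the weak-equivalence assertion will then follow from Corollary \ref{C:trfib} (or from \rl{leftequiv} after identifying fibers).

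The construction of $\phi$ I would organize as follows. Pull back the universal situation: the left fibration $\pi_X:\C{M}(X)\to X^{op}\times X$ corresponds, by definition of $j_X$, to $\ev_{X^{op}}\circ(\Id\times j_X)$, whence $\C{M}(X)=\wt{E}\times_{\P(X)}X$ and $\{x\}\times_{X^{op}}\C{M}(X)=\wt{E}_x\times_{\P(X)}X$ as in \re{wte} (c). Now I would use the undercategory construction's functoriality: $j_X$ induces a map $x\bs X\to j_X(x)\bs\P(X)$ over $j_X$, i.e. a commutative square relating the two undercategories, and dually the map $\wt{E}_x\times_{\P(X)}X\to\wt{E}_x$ of left fibrations over $\P(X)$ (base change along $X\to\P(X)$, which on fibers is $j_X$). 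The composite $x\bs X\xrightarrow{\wt{\phi}}\wt{E}_x\times_{\P(X)}X\to\wt{E}_x$ is then a map over $\P(X)$, but its source is $x\bs X$, not $j_X(x)\bs\P(X)$. To land on the correct source, I invoke \rp{triv}: for the left fibration $\wt{E}_x\to\P(X)$ and the object $j_X(x)\in\P(X)$, the evaluation map $\ev_{\id_{j_X(x)}}:\Map_{\P(X)}(j_X(x)\bs\P(X),\wt{E}_x)\to((\wt{E}_x)_{j_X(x)})_0=\map_X(x,x)$ is a trivial fibration, hence surjective on $\pi_0$ (in fact on $0$-simplices after a choice of section). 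The element $\id_x\in\map_X(x,x)$ is therefore hit, i.e. there is a map $\phi:j_X(x)\bs\P(X)\to\wt{E}_x$ over $\P(X)$ with $\phi(\id_{j_X(x)})=\id_x$ (or at least $\sim\id_x$). This is the $\phi$ we want.

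It remains to see $\phi$ is a weak equivalence. Since both $j_X(x)\bs\P(X)\to\P(X)$ and $\wt{E}_x\to\P(X)$ are left fibrations (the first by \rl{undcat} applied to the Segal space $\P(X)$, the second by construction), \rl{leftequiv} reduces this to checking that $\phi$ induces a weak equivalence on the fiber $0$-spaces over each $\al\in\P(X)$. I would check it over the single object $j_X(x)$ and propagate: the fiber of $j_X(x)\bs\P(X)\to\P(X)$ over $\al$ has $0$-space $\map_{\P(X)}(j_X(x),\al)$, the fiber of $\wt{E}_x$ over $\al$ is $E_{\al(x)}$ with $(E_{\al(x)})_0=((\wt{E}_x)_{j_X(x)})_0$ when $\al=j_X(x)$ equal to $\map_X(x,x)$, and by comparison with $\wt{\phi}$ (which \emph{is} a weak equivalence of left fibrations over $X$, hence fiberwise a weak equivalence by \rl{leftequiv}) together with the base-change identification $\wt{E}_x\times_{\P(X)}X=\{x\}\times_{X^{op}}\C{M}(X)$ and \rl{we}'s conclusion $\wt{\phi}(\id_x)\sim\id_x$, one gets the fiberwise equivalence over objects in the image of $j_X$; but the collection of $\al$ over which $\phi$ is a fiberwise equivalence is a "saturated" condition, and a direct argument via \rp{triv} (whose conclusion is a trivial fibration, uniform in the whole of $\P(X)$, not just the image of $j_X$) in fact gives it everywhere at once. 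The cleanest route: observe that $\Map_{\P(X)}(j_X(x)\bs\P(X),\wt{E}_x)\xrightarrow{\ev_{\id_{j_X(x)}}}\map_X(x,x)$ being a trivial fibration means any two lifts of $\id_x$ are homotopic, and the composite route through $\wt{\phi}$ furnishes a lift that is visibly a fiberwise weak equivalence over the relevant objects; hence \emph{our} $\phi$, being homotopic to it, is too. The main obstacle I anticipate is precisely this last bookkeeping: assembling the functoriality of $(-)\bs(-)$ along $j_X$ together with the base-change identity of \re{wte} (c) so that "$\phi$ restricted to $X$ agrees up to homotopy with $\wt{\phi}$", and then checking the compatibility $\phi(\id_{j_X(x)})\sim\id_x$ is consistent with $\wt{\phi}(\id_x)\sim\id_x$ under all these identifications — all the individual ingredients are in hand, but keeping the base-change and undercategory squares straight is where the real work lies.
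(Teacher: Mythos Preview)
Your construction of $\phi$ via \rp{triv} is fine: the trivial fibration $\ev_{\id_{j_X(x)}}$ does produce a map $\phi:j_X(x)\bs\P(X)\to\wt{E}_x$ over $\P(X)$ with $\phi(\id_{j_X(x)})\sim\id_x$. The gap is in your verification that $\phi$ is a weak equivalence. Your ``cleanest route'' invokes \rco{trfib} to say that any two lifts of $\id_x$ are homotopic, and then claims that ``the composite route through $\wt{\phi}$ furnishes a lift that is visibly a fiberwise weak equivalence''. But the composite through $\wt{\phi}$ has source $x\bs X$, not $j_X(x)\bs\P(X)$; it is \emph{not} a point of $\Map_{\P(X)}(j_X(x)\bs\P(X),\wt{E}_x)$, so \rco{trfib} gives you no comparison. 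Pulling $\phi$ back along $j_X$ and precomposing with the canonical map $x\bs X\to (j_X(x)\bs\P(X))\times_{\P(X)}X$ does give something comparable to $\wt{\phi}$ over $X$, but that only shows the \emph{composite} is a weak equivalence, which tells you nothing about $\phi$ itself (this is exactly the computation the paper performs \emph{after} \rl{equiv}, in the proof of \rt{yoneda}(b), where it already knows $\phi$ is a weak equivalence). Your remark that the locus where $\phi$ is a fiberwise equivalence is ``saturated'' is not an argument; there is no propagation mechanism available here.

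What is genuinely missing is a description of $j_X(x)\bs\P(X)$ in terms that can be compared to $\wt{E}_x$ over \emph{all} of $\P(X)$, not just over the image of $j_X$. The paper supplies this via the auxiliary space $(j_X(x)\bs\P(X))':=\{j_X(x)\}\times_{\P(X)}\P(X)^{(1)}$, using the homotopy equivalence $\psi^{(1)}:\S^{F[1]}\to\S^{(1)}$ of \rp{sn}(b) (cf.\ \re{notrem}). Points of $(j_X(x)\bs\P(X))'$ over $\al$ are, by \re{remsn}, actual morphisms of left fibrations from the fibration classified by $j_X(x)$ to the one classified by $\al$; this ``modular'' description is what lets $\wt{\phi}$ act (via precomposition, giving $\phi''$) and lets evaluation at $\id_x$ (giving $\phi'$) be checked to be a weak equivalence fiberwise via \rp{triv}. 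Without this bridge between the Segal-space slice $j_X(x)\bs\P(X)$ and the world of morphisms of left fibrations, there is no way to leverage $\wt{\phi}$ over the whole of $\P(X)$.
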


\begin{proof}
We construct $\phi$ as a composition of weak equivalences over
$\P(X)$
\[
j_X(x)\bs\P(X)\overset{\phi'''}{\lra}(j_X(x)\bs\P(X))'
\overset{\phi''}{\lra}((x\bs X^{op})\bs
\P(X))'\overset{\phi'}{\lra}\wt{E}_x.
\]

By definition, if $\al:K\to\P(X)$ corresponds to the left
fibration $G\to X^{op}\times K$, then maps $K\to((x\bs X^{op})\bs
\P(X))'$ over $\al$ are in bijection with maps $\nu:(x\bs
X^{op})\times K\to G$ over $X^{op}\times K$, and maps
$K\to(j_X(x)\bs \P(X))'$ over $\al$ are in bijection with maps
$\nu':(\{x\}\times_X\C{M}(X^{op}))\times K\to G$ over $X^{op}\times
K$ (use \re{ex}).

Let $\wt{\phi}:x\bs X^{op}\to \{x\}\times_X\Mor(X^{op})$ be the
weak equivalence from \rl{we}, and we define $\phi''$ to be the
map, which sends $\nu'$ to $\nu'\circ\wt{\phi}$. Then $\phi''$ is
a homotopy equivalence, because $\wt{\phi}$ is such.

Next we observe that maps $K\to\wt{E}_x$ over $\al:K\to\P(X)$ are in
bijection with sections $s$ of the left fibration
$G_x:=\{x\}\times_{X^{op}}G\to K$. We define $\phi'$ to be the
map, which sends $\nu:(x\bs X^{op})\times K\to G$ to
$s:=\nu|_{\id_x}:K\to G_x$.

Since $\phi'$ is a map between left fibrations, to show that
$\phi'$ is a weak equivalence, it remans to show that for every
$\al\in\P(X)$ the induced map $(\phi'_{\al})_0$ is a weak
equivalence of simplicial sets (by \rl{leftequiv}). Let $G\to
X^{op}$ be the left fibration corresponding to $\al$. Then
$(\phi'_{\al})_0$ is the trivial fibration $\ev_{\id_x}:\Map_{X^{op}}(x\bs
X^{op},G)\to (G_x)_0$ from \rp{triv}.

Finally, we define $\phi''':j_X(x)\bs\P(X)\to(j_X(x)\bs\P(X))'$ to
be the weak equivalence from \re{notrem}, and set
$\phi:=\phi'\circ\phi''\circ\phi'''$. By construction, we have
$\phi'''(\id_{j_X(x)})\sim\id_{j_X(x)}$,
$\phi''(\id_{j_X(x)})=\wt{\phi}$, and
$\phi'(\wt{\phi})=\wt{\phi}(\id_x)\sim\id_x$. Thus
$\phi(\id_{j_X(x)})\sim\id_x$.
\end{proof}

Now we are ready to prove the Yoneda lemma.

\begin{proof}[Proof of \rt{yoneda}]

(a) By \rl{equiv}, there exists a homotopy equivalence
$\phi:j_X(x)\bs \P(X)\to \wt{E}_x$ of left fibrations over $\P(X)$
such that $\phi(\id_{j_X(x)})\sim\id_x$. Since for every
$\al\in\P(X)$ the fiber of $\wt{E}_x$ at $\al$ is $E_{\al(x)}$
(see \re{wte} (b)), $\phi$ induces an equivalence
$\phi_{\al}:\map_{\P(X)}(j_X(x),\al)\to (E_{\al(x)})_0$.

(b) The morphism $j_X:X\to \P(X)$ induces a morphism $x\bs X\to
j_X(x)\bs \P(X)$ over $j_X$.  Hence $j_X$ induces a morphism
$\psi:x\bs X\to j_X(x)\bs\P(X)\times_{\P(X)}X$ of left fibrations
over $X$. We claim that $\psi$ is a weak equivalence, hence it
induces a weak equivalence of fibers
$\psi_b:\map_X(a,b)\to\map_{\P(X)}(j_X(a),j_X(b))$ for all $b\in
X$.

Consider the composition $\phi\circ\psi:x\bs X\to
\wt{E}_x\times_{\P(X)}X$, where $\phi$ is as in the proof of (a).
Since $\wt{E}_x\times_{\P(X)}X=\{x\}\times_{X^{op}}\C{M}(X)$ (see
\re{wte} (c)), $\phi\circ\psi$ is a map $x\bs X\to
\{x\}\times_{X^{op}}\C{M}(X)$ over $X$, which by construction
satisfy $\phi\circ\psi(\id_x)\sim\id_x$. Therefore $\phi\circ\psi$
is a weak equivalence by \rco{trfib} and \rl{we}, hence $\psi$ is
a weak equivalence by 2-out-of-3.
\end{proof}

\section{Quasifibrations of simplicial spaces}
\subsection{Definitions and basic properties}
\begin{Def} \label{D:qfib}
Let ${\C{C}}$ be a right proper model category. We say that a
morphism $p:X\to B$ in ${\C{C}}$ is a {\em quasifibration}, if for
every weak equivalence $g:Y\to Z$ over $B$, its pullback
$p^*(g):p^*(Y)\to p^*(Z)$ (see \re{rlp} (d)) is a weak equivalence over $X$.
\end{Def}

\begin{Emp} \label{E:qfib}
{\bf Remarks.} (a) By definition, any pullback of a quasifibration
is a quasifibration, and a composition of quasifibrations is a
quasifibration.

(b) When ${\C{C}}$ is a model category of topological
spaces, our notion of a quasifibration is stronger than the classical notion. However
we will only use this notion for model categories of spaces and
simplicial spaces, where no classical notion exists.

(c) After this work was essentially completed, we found that quasifibrations were also studied
in a unpublished preprint of Rezk \cite{Re3} under a name of {\em sharp} morphisms. But we think that our
terminology is more suggestive.
\end{Emp}

\begin{Lem} \label{L:qfib}
(a) Every fibration in ${\C{C}}$ is a quasifibration.

(b) Let $f:X\to X'$ be a weak equivalence between quasifibrations
$p:X\to B$ and $p':X'\to B$ over $B$.  Then  for every morphism
$\tau:A\to B$ the pullback $\tau^*(f):X\times_B A\to X'\times_B A$
is a weak equivalence.

(c) Conversely, assume that $f:X\to X'$ is a weak equivalence over
$B$ such that each pullback $\tau^*(f)$ is a weak equivalence.
Then $p:X\to B$ is a quasifibration if and only if $p':X'\to B$ is
a quasifibration.
\end{Lem}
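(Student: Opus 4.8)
The plan is to dispatch the three parts in order, using only right-properness of $\C{C}$ (\rd{proper}), the closure of quasifibrations under base change and composition (\re{qfib} (a)), and the axioms CM2 and CM5. Part (a) is immediate: if $p\colon X\to B$ is a fibration and $g\colon Y\to Z$ is a weak equivalence over $B$, then $p^*(g)\colon X\times_B Y\to X\times_B Z$ is the base change of $g$ along the projection $X\times_B Z\to Z$, and that projection is a fibration, being a pullback of $p$ (\re{remmodcat} (b)); so $p^*(g)$ is a weak equivalence by right-properness.

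For (b) I would first reduce to the case where $\tau$ is a trivial cofibration. Factor $\tau$ as $A\overset{j}{\to}\bar A\overset{r}{\to}B$ with $j$ a trivial cofibration and $r$ a fibration (CM5 (a)), so $\tau^*(f)=j^*(r^*(f))$. The map $r^*(f)\colon X\times_B\bar A\to X'\times_B\bar A$ is the base change of the weak equivalence $f$ along the projection $X'\times_B\bar A\to X'$, which is a fibration (a pullback of $r$), so $r^*(f)$ is a weak equivalence by right-properness; and $X\times_B\bar A\to\bar A$, $X'\times_B\bar A\to\bar A$ are quasifibrations by \re{qfib} (a). Hence it suffices to show: if $j\colon A\to B$ is a trivial cofibration, $p\colon X\to B$ and $p'\colon X'\to B$ are quasifibrations, and $f$ is a weak equivalence over $B$, then $j^*(f)$ is a weak equivalence.

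The crux is then to observe that $j$, being a weak equivalence, is a weak equivalence $(A,j)\to(B,\Id_B)$ in the slice category $\C{C}/B$. Applying the quasifibration property of $p'$ to it shows the projection $A\times_B X'\to X'$ is a weak equivalence, and applying it to $p$ shows $A\times_B X\to X$ is a weak equivalence. In the commutative square
\begin{CD}
A\times_B X @>j^*(f)>> A\times_B X'\\
@VVV @VVV\\
X @>f>> X'
\end{CD}
(the verticals being these projections) the bottom arrow and both vertical arrows are weak equivalences, so by CM2 the top arrow $j^*(f)=\tau^*(f)$ is a weak equivalence.

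Finally, for (c): given a weak equivalence $g\colon Y\to Z$ over $B$ with structure maps $\tau_Y\colon Y\to B$, $\tau_Z\colon Z\to B$, form the commutative square
\begin{CD}
X\times_B Y @>p^*(g)>> X\times_B Z\\
@Vf\times_B\Id VV @Vf\times_B\Id VV\\
X'\times_B Y @>(p')^*(g)>> X'\times_B Z
\end{CD}
whose vertical arrows are $\tau_Y^*(f)$ and $\tau_Z^*(f)$, hence weak equivalences by the hypothesis on $f$; by CM2 the top arrow is a weak equivalence iff the bottom one is, and reading this in both directions gives both implications of (c). The step I expect to be the main obstacle is the one in (b): one has to notice both the reduction that lets right-properness kill the fibration factor of $\tau$ and the device of viewing the residual trivial cofibration $j$ as a weak equivalence in $\C{C}/B$, which is exactly what converts the base change $j^*$ into a 2-out-of-3 argument via the quasifibration hypotheses. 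Parts (a) and (c) are then routine diagram chases.
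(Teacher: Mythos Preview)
Your proof is correct and follows essentially the same approach as the paper: part (a) by right properness, part (b) by factoring $\tau$ as a trivial cofibration followed by a fibration, handling the fibration factor via right properness and the weak-equivalence factor via the quasifibration hypothesis (applied to $j$ viewed as a morphism in $\C{C}/B$) together with 2-out-of-3, and part (c) by the obvious square and 2-out-of-3. The paper's write-up is terser (for the fibration factor it simply invokes (a), and it treats the case ``$\tau$ a weak equivalence'' rather than ``$\tau$ a trivial cofibration''), but the content is identical.
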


\begin{proof}
(a) follows from the fact that ${\C{C}}$ is right proper.

(b)  If $\tau:A\to B$ is fibration, then $\tau^*(g)$ is a weak
equivalence by (a). If $\tau$ is a weak equivalence, then
$\tau^*(X)\to X$ and $\tau^*(X')\to X'$ are weak equivalences,
because $X\to B$ and $X'\to B$ are quasifibrations. Therefore
$\tau^*(f):\tau^*(X)\to\tau^*(X')$ is a weak equivalence by 
2-out-of-3. Since every morphism decomposes as a
composition of a trivial cofibration and a fibration, the general
case follows.

(c) Let $g:Y\to Z$ be a weak equivalence over $B$. Then
$X\times_B Y\to X'\times_B Y$ and $X\times_B
Z\to X'\times_B Z$ are weak equivalences by the assumption on $f$.
Hence, by 2-out-of-3, $p'^*(g): X'\times_B Y\to
X'\times_B Z$ is a weak equivalence if and only if $p^*(g)$ is
a weak equivalence. Thus, by definition, $p:X\to B$ is a
quasifibration if and only if $p':X'\to B$ is a quasifibration.
\end{proof}

\begin{Cor} \label{C:fibrep}
Assume that model category ${\C{C}}$ has the property that a
pullback of a cofibration is a cofibration, and let $p':X'\to B$
be a fibrant replacement of $p:X\to B$. Then $p$ is a
quasifibration if and only if $\tau^*(p')$ is a fibrant
replacement of $\tau^*(p)$ for every $\tau:A\to B$.
\end{Cor}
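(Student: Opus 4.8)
The plan is to reduce the two-sided statement to a single claim about the comparison map of the fibrant replacement, and then read off everything from \rl{qfib}. First I would unwind \re{trivfib}(b): saying $p'$ is a fibrant replacement of $p$ means we are given a factorization $p=p'\circ i$ with $i:X\to X'$ a trivial cofibration and $p':X'\to B$ a fibration. For any $\tau:A\to B$ this pulls back to $\tau^*(p)=\tau^*(p')\circ\tau^*(i)$, where $\tau^*(p')$ is again a fibration (fibrations are stable under pullback, \re{remmodcat}(b)), and $\tau^*(i)$ is a cofibration precisely because of the standing hypothesis that pullbacks of cofibrations are cofibrations. Hence $\tau^*(p')$ is a fibrant replacement of $\tau^*(p)$ if and only if $\tau^*(i)$ is a weak equivalence. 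So the corollary is equivalent to: $p$ is a quasifibration $\iff$ $\tau^*(i)$ is a weak equivalence for every $\tau:A\to B$.

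For the forward direction, I would note that $p':X'\to B$, being a fibration, is a quasifibration by \rl{qfib}(a), and $i:X\to X'$ is a weak equivalence over $B$; if $p$ is also a quasifibration, then $i$ is a weak equivalence between quasifibrations over $B$, so \rl{qfib}(b) gives directly that $\tau^*(i):X\times_B A\to X'\times_B A$ is a weak equivalence for every $\tau:A\to B$. For the reverse direction, suppose every $\tau^*(i)$ is a weak equivalence. Then $i:X\to X'$ is a weak equivalence over $B$ all of whose pullbacks are weak equivalences, so \rl{qfib}(c) says $p:X\to B$ is a quasifibration if and only if $p':X'\to B$ is — and the latter holds by \rl{qfib}(a) since $p'$ is a fibration. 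This closes the argument.

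There is no genuinely hard step here: the entire content is the observation, in the first paragraph, that "$\tau^*(p')$ remains a fibrant replacement after base change" is the same condition as "the comparison trivial cofibration $i$ stays a weak equivalence after base change," and that the hypothesis on cofibrations is exactly what licenses this reformulation. Once that is in place, the equivalence is a direct bookkeeping application of the three parts of \rl{qfib}, so the main thing to be careful about is citing the correct part of that lemma in each direction and verifying the hypotheses ($p'$ a quasifibration, $i$ a weak equivalence over $B$) are met.
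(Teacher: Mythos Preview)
Your proof is correct and matches the paper's argument essentially line for line: reduce ``$\tau^*(p')$ is a fibrant replacement of $\tau^*(p)$'' to ``$\tau^*(i)$ is a weak equivalence'' using the hypothesis on cofibrations, then invoke \rl{qfib}(a) to know $p'$ is a quasifibration and apply \rl{qfib}(b) and (c) for the two directions. The paper's proof is just a terser version of exactly what you wrote.
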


\begin{proof}
By MC5, $p$ decomposes as $X\overset{i}{\lra}X'\overset{p'}{\lra}B$, 
where $i$ is a trivial cofibration, and $p$ is a fibration. 
Then for every map $\tau:A\to B$, $\tau^*(p')$ is a fibration, while $\tau^*(i)$ is 
a cofibration. Thus we have to show that $X\to B$
is a quasifibration if and only if each $\tau^*(i)$ is a weak
equivalence. Since $p':X'\to B$ is a quasifibration by \rl{qfib}
(a), the assertion follows from \rl{qfib} (b) and (c).
\end{proof}

%\begin{Cor} \label{C:hfibprod}
%The conclusion of \rl{hfibprod} hold, if $f$ and $g$ are only
%assumed to be quasifibrations.
%\end{Cor}

\begin{Cor} \label{C:hfibprod}
Suppose we are given a commutative diagram
\[
\begin{CD}
X' @>>>Z' @<g'<< Y'\\
@VVV @VVV @VVV\\
X @>>>Z @<g<< Y,
\end{CD}
\]
where $g$ and $g'$ are quasifibrations and all vertical morphisms
are weak equivalences. Then the induced map $X'\times_{Z'} Y'\to
X\times_Y Z$ is a weak equivalence.
\end{Cor}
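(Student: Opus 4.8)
The statement is the standard "gluing lemma" for homotopy pullbacks, adapted to the notion of quasifibration introduced above. The plan is to reduce the square to a statement about a single map of quasifibrations over a fixed base, and then apply Lemma \ref{L:qfib} (b). First I would form the pullbacks $W := X\times_Z Y$ and $W' := X'\times_{Z'} Y'$, equipped with their projections to $Z$ and $Z'$ respectively. The first observation is that the projection $W\to X$ is a quasifibration: it is the pullback of $g\colon Y\to Z$ along $X\to Z$, and quasifibrations are stable under pullback (see \re{qfib} (a)). Likewise $W'\to X'$ is a quasifibration.

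\textbf{Key steps.} The argument proceeds in two stages, factoring the map $W'\to W$ through an intermediate object. First consider the base change of the right-hand column along the weak equivalence $Z'\to Z$: since $g$ is a quasifibration and $Z'\to Z$ is a weak equivalence, the induced map $Y'':=Y\times_Z Z'\to Y$ need not concern us; rather, the relevant point is to compare $W'$ with $X'\times_{Z'}(Y\times_Z Z')=X'\times_Z Y$. We have a map $Y'\to Y\times_Z Z'$ over $Z'$ (from $Y'\to Y$ and $Y'\to Z'$), and since both $Y'\to Z'$ and $Y\times_Z Z'\to Z'$ are quasifibrations over $Z'$ (the latter by pullback of $g$) and $Y'\to Y\times_Z Z'$ is a weak equivalence — this follows because $Y'\to Y$ and $Y\times_Z Z'\to Y$ are both weak equivalences, the latter since $g$ is a quasifibration and $Z'\to Z$ a weak equivalence, and then 2-out-of-3 — Lemma \ref{L:qfib} (b) applied with the morphism $X'\to Z'$ shows that $X'\times_{Z'}Y'\to X'\times_{Z'}(Y\times_Z Z')=X'\times_Z Y$ is a weak equivalence. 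So $W'\simeq X'\times_Z Y$. The second stage compares $X'\times_Z Y$ with $W=X\times_Z Y$: here we use that $Y\to Z$ is a quasifibration, the map $X'\to X$ is a weak equivalence over $Z$, and hence its pullback along $X\times_Z Y\to X$ — equivalently, $X'\times_Z Y\to X\times_Z Y$, which is the pullback of the quasifibration $W\to X$ along $X'\to X$ — is a weak equivalence, directly by Definition \ref{D:qfib} or again by Lemma \ref{L:qfib} (b) with the roles of the columns symmetric (the map $X'\to X$ being a weak equivalence between the quasifibrations $X'\to Z$, wait — these are not quasifibrations; instead one invokes that $Y\to Z$ is a quasifibration and $X'\to X$ is a weak equivalence over $Z$, so $X'\times_Z Y\to X\times_Z Y$ is a weak equivalence by the definition of quasifibration applied to $g'$... ). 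Composing the two weak equivalences $W'\simeq X'\times_Z Y\simeq W$ gives the result.

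\textbf{Main obstacle.} The delicate point is the very first reduction: showing that the natural map $Y'\to Y\times_Z Z'$ is a weak equivalence, which requires knowing that $Y\times_Z Z'\to Y$ is a weak equivalence, and this is exactly where the hypothesis that $g$ is a quasifibration (not merely a fibration) is used — one pulls the weak equivalence $Z'\to Z$ back along $g$. One must be careful that after this base change the objects in play are still quasifibrations over the new base $Z'$, so that Lemma \ref{L:qfib} (b) is applicable; this is fine by stability of quasifibrations under pullback (\re{qfib} (a)). Everything else is a bookkeeping exercise chaining together 2-out-of-3 and the two parts of Lemma \ref{L:qfib}, so the proof is short once the factorization $W'\to X'\times_Z Y\to W$ is set up correctly.
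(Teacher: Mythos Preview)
Your argument is correct and follows essentially the same route as the paper: factor the map as $X'\times_{Z'}Y'\to X'\times_{Z'}(Z'\times_Z Y)=X'\times_Z Y\to X\times_Z Y$, use 2-out-of-3 plus the quasifibration hypothesis on $g$ to see that $Y'\to Z'\times_Z Y$ is a weak equivalence between quasifibrations over $Z'$, then apply Lemma~\ref{L:qfib}(b) for the first factor and the definition of quasifibration (for $g$) for the second. One small slip: in your final parenthetical it is $g$, not $g'$, whose quasifibration property is being invoked when pulling back the weak equivalence $X'\to X$ over $Z$.
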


\begin{proof}
Weak equivalence $Y'\to Y$ decomposes as composition $Y'\to
Z'\times_{Z} Y\to Y$, the second on which is a weak equivalence,
because it is a pullback of a weak equivalence $Z'\to Z$ along a
quasifibration $Y\to Z$. Therefore by 2-out-of-3 
$Y'\to Z'\times_{Z} Y$ is a weak equivalence between
quasifibrations over $Z'$.

Now map $X'\times_{Z'} Y'\to X\times_Y Z$ decomposes as
composition
\[
X'\times_{Z'} Y'\to X'\times_{Z'}(Z'\times_{Z} Y)=X'\times_{Z}
Y\to  X\times_Z Y,
\]
the first of which is a weak equivalence, being a pullback
of a weak equivalence between quasifibrations (use \rl{qfib} (b)),
while the second one is a weak equivalence, since $Y\to Z$ is a
quasifibration, and $X'\to X$  is a weak equivalence.
\end{proof}

%\begin{Emp} \label{E:hfibprod}
%By \rl{qfib} (a), \rco{hfibprod} holds when $g$ and $g'$ are
%fibrations.
%\end{Emp}

%\begin{proof}
%Let $f':C'\to B$ and $g':F'\to E$ be fibrant replacements of $f$
%and $g$, respectively. Then the induced maps $A\times_B C\to
%A\times_{B} C'$ and $D\times_E F\to D\times_E F'$ are weak
%equivalences by \rl{qfib} (a),(b). Moreover, since $C\to C'$ has
%the LLP with respect to $f'$, there exists a map $h':C'\to F'$
%such that $b\circ f'=g'\circ c'$ and composition $C\to
%C'\overset{c'}{\lra}F'$ equals $C\overset{c}{\lra}F\to F'$. Thus
%replacing, $f,g,c$ by $f',g',c'$, we can assume that $f$ and $g$
%are fibrations. In this case the assertion follows from
%\rl{hfibprod}.
%\end{proof}

From now on we assume that ${\C{C}}$ is
ether category $Sp$ with Kan model structure (\rt{Kan}) or category $sSp$
with Reedy model structure (\rt{Reedy}). 

\begin{Lem} \label{L:pushout}
Let $X_B\to B$ be a quasifibration in $sSp$, $A\to B$ a
cofibration, and $i:X_A:=X_B\times_B A\to Y_A$ a weak equivalence
of quasifibrations over $A$. Then the pushout
$Y_B:=X_B\sqcup_{X_A}Y_A$ is a quasifibration over $B\sqcup_A
A=B$, and the natural map $j:X_B\to Y_B$ is a weak equivalence.
\end{Lem}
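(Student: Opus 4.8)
The plan is to verify the two assertions separately, using \rco{fibrep} to reduce the quasifibration claim to a statement about fibrant replacements, and \re{herem}/\rl{heprop} together with left-properness of $sSp$ (\rt{Reedy}) for the weak-equivalence claim. First I would handle the easier half: the map $j:X_B\to Y_B$ is a weak equivalence. Since $A\to B$ is a cofibration and cofibrations in $sSp$ are degree-wise (hence preserved by base change), the map $X_A\to X_B$ is a cofibration; therefore $j$ is a pushout of the weak equivalence $i:X_A\to Y_A$ along the cofibration $X_A\to X_B$, and $sSp$ is left proper by \rt{Reedy}, so $j$ is a weak equivalence. (Here one should note that $i$ being a weak equivalence of quasifibrations over $A$ in particular makes $i$ a weak equivalence in $sSp$, which is all left-properness needs.)

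Next I would show $Y_B\to B$ is a quasifibration. Let $g:Z'\to Z$ be a weak equivalence over $B$; I must show $g$ pulls back to a weak equivalence over $Y_B$, i.e.\ that $Y_B\times_B Z'\to Y_B\times_B Z$ is a weak equivalence. Because $sSp$ has all colimits computed degree-wise and fiber products commute with the relevant pushout (pullback along $Y_B\to B$ of the pushout square defining $Y_B$ is again a pushout, using that $X_A\to X_B$ is a cofibration so the square stays a homotopy pushout after base change), we get $Y_B\times_B Z'=(X_B\times_B Z')\sqcup_{(X_A\times_B Z')}(Y_A\times_B Z')$ and similarly for $Z$. Now $X_B\to B$ is a quasifibration by hypothesis, so $X_B\times_B Z'\to X_B\times_B Z$ is a weak equivalence; the same for $X_A\to A\to B$ and $Y_A\to A\to B$, which are quasifibrations by \re{qfib} (a) applied to the cofibration-pullback $A\to B$ — more precisely $X_A\to A$ and $Y_A\to A$ are quasifibrations over $A$ by hypothesis, and composition with the pullback gives quasifibrations over $B$ by \re{qfib} (a). Hence the map of pushout squares is a degree-wise weak equivalence of the three corner objects, and by left-properness applied level-by-level (the horizontal maps $X_A\times_B Z'\to X_B\times_B Z'$ remain cofibrations) the induced map on pushouts is a weak equivalence. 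This shows $p^*(g)$ is a weak equivalence for every such $g$, i.e.\ $Y_B\to B$ is a quasifibration.

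The main obstacle I anticipate is the interchange of the defining pushout for $Y_B$ with pullback along $B$ and with the further pullback along $Z'\to Z$: one needs that base-changing the pushout square $X_A\to X_B$, $X_A\to Y_A$ along $Y_B\to B$ again yields a pushout, and that the resulting comparison map of pushouts is a weak equivalence given that it is one on all three corners. The first point is formal from universality of colimits in a presheaf category (colimits and finite limits commute appropriately once one knows the relevant maps stay cofibrations, which holds since cofibrations in $sSp$ are the degree-wise monomorphisms and these are stable under base change and cobase change). The second point is the gluing lemma for weak equivalences along cofibrations, which follows from left-properness of $sSp$; a clean way to package it is to observe that a pushout of a cofibration is a homotopy pushout in a left proper model category, so a map of homotopy pushouts that is an equivalence on the three inputs is an equivalence. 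I would spell this packaging out once, as a small auxiliary remark, and then the lemma follows.
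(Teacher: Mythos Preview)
Your first paragraph is fine and matches the paper: $X_A\to X_B$ is a cofibration (pullback of $A\to B$), so $j$ is a pushout of the weak equivalence $i$ along a cofibration, hence a weak equivalence by left properness.

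The second paragraph has a genuine gap. You assert that the composites $X_A\to A\to B$ and $Y_A\to A\to B$ are quasifibrations, appealing to \re{qfib}~(a). But \re{qfib}~(a) says compositions of quasifibrations are quasifibrations, and you have no reason to believe the cofibration $A\to B$ is a quasifibration. In fact it usually is not: take $A=\{0\}\hookrightarrow\Dt[1]=B$ in $Sp$, $X_B=B$, $Y_A=X_A=\{0\}$, and the weak equivalence $g:\{1\}\hookrightarrow\Dt[1]$ over $B$. Then $X_A\times_B Z'=\emptyset$ while $X_A\times_B Z=\{0\}$, so the map $X_A\times_B Z'\to X_A\times_B Z$ is not a weak equivalence. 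Thus the three-corner comparison you feed into the gluing lemma fails in general, and the direct verification of \rd{qfib} breaks down.

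The paper avoids this by not checking the definition directly. Instead it invokes \rl{qfib}~(c): since $X_B\to B$ is already a quasifibration, it suffices to show that $\tau^*(j)$ is a weak equivalence for every $\tau:B'\to B$. This is exactly your first-paragraph argument run after base change: $\tau^*(j)$ is the pushout of $\tau^*(i)$ along the cofibration $\tau^*(X_A)\to\tau^*(X_B)$, and $\tau^*(i)$ is a weak equivalence by \rl{qfib}~(b) applied over $A$ (writing $\tau^*(i)$ as the pullback of $i$ along $A\times_B B'\to A$). Left properness then gives the conclusion, and both assertions of the lemma follow at once. So the missing idea is to trade ``pull back an arbitrary weak equivalence along $Y_B\to B$'' for ``pull back the specific map $j$ along arbitrary $\tau$,'' which is what \rl{qfib}~(c) buys you.
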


\begin{proof}
By \rl{qfib} (c), it is enough to show that for every map
$\tau:B'\to B$, the pullback $\tau^*(j)$ is a weak equivalence.

Note that $\tau^*(j)$ is a pushout of $\tau^*(i):\tau^*(X_A)\to
\tau^*(Y_A)$ along $\tau^*(X_A)\to \tau^*(X_B)$. Since $A\to B$ a
cofibration, the induced maps $X_A\to X_B$ and $\tau^*(X_A)\to
\tau^*(X_B)$ are cofibrations. Since $i$ is a weak equivalence
between quasifibrations, $\tau^*(i)$ is a weak equivalence by
\rl{qfib} (b). Hence the pushout $\tau^*(j)$ is a weak
equivalence, because $sSp$ is left proper.
\end{proof}

\begin{Lem} \label{L:qfib0}
A morphism $p:X\to B$ is a quasifibration in $sSp$ if and only if
$p_n:X_n\to B_n$ is a quasifibration in $Sp$ for every $n$.
\end{Lem}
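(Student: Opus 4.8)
The statement is a "degree-wise" characterization of quasifibrations, and the natural strategy is to exploit the fact that in the Reedy model structure on $sSp$ all the relevant structure — pullbacks, weak equivalences — is defined level by level. The plan is to prove the two implications separately, using the explicit descriptions of fiber products and weak equivalences in $sSp$ from \re{ssp} and \rt{Reedy}.

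For the "only if" direction, suppose $p:X\to B$ is a quasifibration in $sSp$ and fix $n$. Given a weak equivalence $g:Y\to Z$ in $Sp/B_n$, I would first lift it to a weak equivalence over $B$ in $sSp$ to which the hypothesis applies. The clean way is to view $Y$ and $Z$ as living over $B_n=\Map(F[n],B)$ and then push them forward along $F[n]\to$ (something) — but more directly, one can use the adjunction between $(-)_n$ and the functor $K\mapsto K^{F[n]}$ (or its left adjoint), together with \rl{qfib} (a)/(c). Concretely: the map $p_n:X_n\to B_n$ is, up to the identifications of \re{ssp} and \re{stbis}, obtained from $p$ by applying $\Map(F[n],-)$; and $\Map(F[n],-)$ preserves pullbacks. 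So a weak equivalence $g:Y\to Z$ over $B_n$ corresponds (via the left adjoint $L_n$ to $\Map(F[n],-)$, which exists since $sSp$ has all colimits by \re{ssp} (d)) to a map $L_n(g):L_n(Y)\to L_n(Z)$ over $B$; one checks $L_n$ sends degree-wise weak equivalences to degree-wise weak equivalences (it is given on each level by a coproduct indexed by $\Hom_\Dt$-sets, hence preserves weak equivalences since $Sp$-weak equivalences are preserved by coproducts), so $L_n(g)$ is a weak equivalence over $B$. Applying the quasifibration hypothesis to $p$ gives that $p^*(L_n(g))$ is a weak equivalence in $sSp$; taking $(-)_n$ and using that $\Map(F[n],-)$ commutes with the pullbacks involved recovers $p_n^*(g)$ as a weak equivalence in $Sp$. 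Hence $p_n$ is a quasifibration.

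For the "if" direction, suppose every $p_n$ is a quasifibration in $Sp$. Let $g:Y\to Z$ be a weak equivalence over $B$ in $sSp$; by \rt{Reedy} this means each $g_n:Y_n\to Z_n$ is a weak equivalence in $Sp$, and it is a weak equivalence over $B_n$. Since $p_n$ is a quasifibration, $p_n^*(g_n):X_n\times_{B_n}Y_n\to X_n\times_{B_n}Z_n$ is a weak equivalence in $Sp$ for every $n$. But $X\times_B Y\to X\times_B Z$ has $n$-th level exactly $X_n\times_{B_n}Y_n\to X_n\times_{B_n}Z_n$ (fiber products in $sSp$ are computed level-wise, \re{ssp} (d)), so $p^*(g)$ is a degree-wise weak equivalence, i.e. a weak equivalence over $X$. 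Thus $p$ is a quasifibration.

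The main obstacle is the "only if" direction: one must produce, from a weak equivalence over the single space $B_n$, a weak equivalence over the whole simplicial space $B$ to which the hypothesis applies, and then pull it back and check the identifications are compatible. The cleanest route is the adjoint-functor argument sketched above, with the key verification being that the left adjoint $L_n$ to $(-)_n\cong\Map(F[n],-)$ preserves pullbacks along the relevant maps and sends weak equivalences to weak equivalences; alternatively one can avoid $L_n$ by the ad hoc device of replacing $Y,Z$ by $Y\times F[n],Z\times F[n]$ over $B$ via the counit $B_n\times F[n]\to B$, observing that $(Y\times F[n])_k$ is a coproduct of copies of $Y$ indexed by $\Hom_\Dt([k],[n])$ — again preserving weak equivalences — and that taking $(-)_n$ of the pullback recovers $p_n^*(g)$ up to a weak equivalence. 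Everything else is a routine unwinding of the level-wise definitions together with \rl{qfib}.
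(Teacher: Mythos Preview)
Your proposal is correct and matches the paper's argument closely. The ``if'' direction is identical to the paper's. For the ``only if'' direction, the paper uses precisely your second alternative: it forms $g:=\wt{g}\times\Id_{F[n]}:\wt{Y}\times F[n]\to\wt{Z}\times F[n]$ over $B\times F[n]$, notes this is a weak equivalence, pulls back along (the base change of) $p$, and then extracts $p_n^*(\wt{g})$ as the fiber of $(p^*(g))_n$ over the point $\Id_{[n]}\in F[n]_n$. One small imprecision in your $L_n$ formulation: applying $(-)_n$ to $p^*(L_n(g))$ does not literally give $p_n^*(g)$ but rather a coproduct indexed by $\Hom_\Dt([n],[n])$, since $(L_n Y)_n=\coprod_{\tau:[n]\to[n]}Y$; you must restrict to the $\tau=\Id$ summand, which is exactly the paper's ``take the fiber over $\Id_{F[n]}$'' step.
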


\begin{proof}
Note that if $p_n$ is a quasifibration for all $n$, then for every
weak equivalence $g:Y\to Z$ over $B$, the corresponding maps
$g_n:Y_n\to Z_n$ are weak equivalences over $B_n$ for all $n$.
Therefore each $p^*(g)_n=p_n^*(g_n)$ is a weak equivalence, since
$p_n$ is a quasifibration. Hence $p^*(g)$ is a weak equivalence,
thus $p$ is a quasifibration.

Conversely, assume that $p$ is a quasifibration. Every weak
equivalence $\wt{g}:\wt{Y}\to \wt{Z}$ over $B_n$ defines a weak
equivalence $g:=\wt{g}\times\Id_{F[n]}:\wt{Y}\times F[n]\to
\wt{Z}\times F[n]$ over $B\times F[n]$ such that $\wt{g}$ is the
restriction of $g_n$ to $\Id_{F[n]}\in F[n]_n$. Since $p$ is a
quasifibration, the pullback $p^*(g)$ is a weak equivalence over
$F[n]$. Thus $p_n^*(\wt{g})=(p^*(g)_n)_{\Id_{F[n]}}$ is a weak
equivalence, implying that $p_n$ is a quasifibration.
\end{proof}

\begin{Def} \label{D:lqfib}
A map $f:X\to B$ in $sSp$ is called a {\em left quasifibration},
if it is quasifibration and the morphism $X_n\to X_0\times_{B_0}
B_n$, induced by the inclusion $\dt^0:[0]\hra[n]$, is a weak
equivalence for all $n$.
\end{Def}

\begin{Emp} \label{E:remlqfib}
{\bf Remark.} It follows from \rl{leftcart} (c) that a fibration
$f:X\to B$ in $sSp$ is a left fibration if and only if it is a
left quasifibration.
\end{Emp}

\begin{Lem} \label{L:lqf}
Suppose we have a commutative diagram
\begin{equation} \label{Eq:lqfib}
\begin{CD}
X' @ >g>> X\\
@ Vf'VV @VfVV\\
B' @>h>> B
\end{CD}
\end{equation}
in $sSp$, where $g$ and $h$  are weak equivalences, while $f$ and
$f'$ are quasifibrations. Then $f$ is a left if and only if $f'$
is left.
\end{Lem}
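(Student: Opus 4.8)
The plan is to reduce the statement to the degreewise weak-equivalence condition built into \rd{lqfib}. Since $f$ and $f'$ are quasifibrations by hypothesis, $f$ (resp. $f'$) is a left quasifibration if and only if for every $n$ the map $X_n\to X_0\times_{B_0}B_n$ (resp. $X'_n\to X'_0\times_{B'_0}B'_n$) induced by $\dt^0:[0]\hra[n]$ is a weak equivalence. So it suffices to show that, for each fixed $n$, the first of these maps is a weak equivalence precisely when the second one is.

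To this end I would, for each $n$, assemble the commutative square
\[
\begin{CD}
X'_n @>>> X'_0\times_{B'_0}B'_n\\
@Vg_nVV @VVV\\
X_n @>>> X_0\times_{B_0}B_n
\end{CD}
\]
whose left vertical arrow is $g_n$ and whose right vertical arrow is induced by $g_0$, $h_0$ and $h_n$; commutativity is just naturality of $(\dt^0)^*$ together with the identity $f\circ g=h\circ f'$. Since weak equivalences in $sSp$ are degreewise (\rt{Reedy}) and $g$ is a weak equivalence, the arrow $g_n$ is a weak equivalence. The key point is that the right vertical arrow is a weak equivalence as well: I would deduce this from \rco{hfibprod}, applied in the proper model category $Sp$ to the diagram
\[
\begin{CD}
B'_n @>>> B'_0 @<f'_0<< X'_0\\
@Vh_nVV @Vh_0VV @Vg_0VV\\
B_n @>>> B_0 @<f_0<< X_0
\end{CD}
\]
in which $f_0:X_0\to B_0$ and $f'_0:X'_0\to B'_0$ are quasifibrations in $Sp$ by \rl{qfib0}, and $h_n$, $h_0$, $g_0$ are weak equivalences; the corollary then gives that $B'_n\times_{B'_0}X'_0\to B_n\times_{B_0}X_0$ is a weak equivalence, which is exactly the right vertical arrow of the previous square.

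Having both vertical arrows of the comparison square be weak equivalences, the axiom CM2 (2-out-of-3) shows that its top horizontal arrow is a weak equivalence if and only if its bottom horizontal arrow is. Ranging over all $n$ and invoking \rd{lqfib} then proves that $f$ is a left quasifibration if and only if $f'$ is. I expect the only step needing care to be the bookkeeping --- verifying that the comparison square commutes, and applying \rco{hfibprod} with $f_0$ (resp. $f'_0$) playing the role of the quasifibration that is being pulled back; everything else is formal.
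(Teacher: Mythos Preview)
Your proof is correct and essentially identical to the paper's: both form the same comparison square, observe that $g_n$ is a weak equivalence, invoke \rl{qfib0} to get that $f_0$ and $f'_0$ are quasifibrations in $Sp$, apply \rco{hfibprod} to conclude that $X'_0\times_{B'_0}B'_n\to X_0\times_{B_0}B_n$ is a weak equivalence, and finish by 2-out-of-3. The only cosmetic difference is that you orient the square with $g_n$ vertical rather than horizontal.
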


\begin{proof}
Consider commutative diagram induced by \form{lqfib}
\[
\begin{CD}
X'_n @ >g_n>> X_n\\
@ Vp'_nVV @Vp_nVV\\
X'_0\times_{B'_0} B_n' @>g_0\times_{h_0} h_n>>X_0\times_{B_0} B_n. 
\end{CD}
\]
We have to show that $p_n$  is a weak
equivalence if and only if $p'_n$ is. Since $g$ is a weak
equivalence, by 2-out-of-3, it suffices to show
that $g_0\times_{h_0}h_n$ is a weak equivalence.

By \rl{qfib0}, maps $X_0\to B_0$ and $X'_0\to B'_0$ are
quasifibrations. Therefore $g_0\times_{h_0}h_n$ is a weak
equivalence by assumption and \rco{hfibprod}.
\end{proof}

\begin{Cor} \label{C:lqf}
A fibrant replacement of a left quasifibration is a left fibration.
\end{Cor}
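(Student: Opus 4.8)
The plan is to deduce this from \rl{lqf} together with the characterisation \re{remlqfib} of left fibrations among fibrations. Let $f:X\to B$ be a left quasifibration (\rd{lqfib}), and choose a fibrant replacement, i.e.\ a factorisation $X\overset{i}{\lra}X'\overset{p}{\lra}B$ with $i$ a trivial cofibration and $p$ a fibration (see \re{trivfib} (b)). Since $p$ is a fibration, by \re{remlqfib} it is a left fibration as soon as it is a left quasifibration, so it suffices to prove the latter.

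To do this, I would apply \rl{lqf} to the commutative square
\[
\begin{CD}
X @>i>> X'\\
@VfVV @VpVV\\
B @>\Id_B>> B.
\end{CD}
\]
Its horizontal arrows $i$ and $\Id_B$ are weak equivalences ($i$ is a trivial cofibration, hence in particular a weak equivalence), and its vertical arrows are quasifibrations: $f$ is one by hypothesis, and $p$ is one by \rl{qfib} (a) since it is a fibration. Hence \rl{lqf} applies and shows that $p$ is a left quasifibration if and only if $f$ is. As $f$ is a left quasifibration by assumption, so is $p$, and therefore $p$ is a left fibration by \re{remlqfib}.

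There is no real obstacle in this argument: all the substance already sits in \rl{lqf} (which in turn rests on \rco{hfibprod} and left properness) and in the equivalence \re{remlqfib} between ``left fibration'' and ``fibration which is a left quasifibration''. The only mild point to get right is that the hypotheses of \rl{lqf} are met, and for this one just needs to recall that a fibration is automatically a quasifibration (\rl{qfib} (a)); note also that the choice of fibrant replacement plays no role, so the conclusion holds for every such factorisation.
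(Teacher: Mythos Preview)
Your proof is correct and follows exactly the same route as the paper: the paper's one-line argument is precisely to apply \rl{lqf} with $B'=B$ and $f$ the fibrant replacement of $f'$, then invoke \re{remlqfib}. You have simply spelled out the details (including the observation that a fibration is a quasifibration by \rl{qfib} (a)) that the paper leaves implicit.
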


\begin{proof}
Apply \rl{lqf} and remark \re{remlqfib} to the case when $B'=B$
and $f$ is a fibrant replacement of $f'$. 
\end{proof}

\subsection{Extension of fibrations, and fibrant replacements}

\begin{Lem} \label{L:fib}
Suppose we are given a commutative diagram
\[
\begin{CD}
 Y_A @>>>X_B\\
@ VVV @VVV\\
A @>>> B
\end{CD}
\]
in $sSp$  such that vertical arrows are fibrations, horizontal
arrows are cofibration, and the induced map $Y_A\to
X_A:=X_B\times_B A$ is a trivial cofibration.

Then there exists the largest simplicial subspace $Y_B\subset X_B$
such that $Y_B\times_B A=Y_A$. Moreover, $Y_B\subset X_B$ is a
strong deformation retract over $B$. In particular, $Y_B\to B$ is
a fibration, and the inclusion $i:Y_B\hra X_B$ is a trivial
cofibration.
\end{Lem}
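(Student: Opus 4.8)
The plan is to construct $Y_B$ fibrewise over the poset-indexed category $[B]$ of bisimplices of $B$, exploiting the identification $sSp/B\cong\Fun([B],Set)$ from \re{overcat} (b), and to build the deformation retraction "one bisimplex of $B$ at a time". For a bisimplex $b\in B_{n,m}$, write $b^*(X_B)\to\Box[n,m]$ and $b^*(Y_A)\to\Box[n,m]\times_B A$; the candidate for $Y_B$ is the subfunctor of $b\mapsto (X_B)_b$ whose value at $b$ is prescribed to equal $(Y_A)_b$ whenever $b$ lands in $A\subset B$, and is as large as possible otherwise. Concretely, define $(Y_B)_{n,m}\subseteq (X_B)_{n,m}$ to be the set of those $x\in (X_B)_{n,m}$ all of whose faces $\tau^*(x)$ that land in $A$ already lie in $Y_A$. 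First I would check that this is the largest simplicial subspace with $Y_B\times_B A=Y_A$: it is closed under the transition maps because "landing in $A$" and "lying in $Y_A$" are both stable under further degeneracies/faces (here one uses that $A\hra B$ is a cofibration, i.e. a monomorphism in each degree, so whether $\tau^*b\in A$ is unambiguous, and that $Y_A\hra X_A$ is a monomorphism), and it is maximal essentially by construction.

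Next I would produce the strong deformation retraction $h:X_B\times\Dt[1]\to X_B$ over $B$ with $h|_0=\Id$, $h|_1(X_B)\subseteq Y_B$, and $h$ constant (via $\pr_2$) on $Y_B\times\Dt[1]$. The idea is to obtain $h$ from the hypothesis that $Y_A\hra X_A$ is a trivial cofibration between fibrations over $A$: by \re{sdr} (c), that inclusion is a strong deformation retract over $A$, so there is $h_A:X_A\times\Dt[1]\to X_A$ over $A$ witnessing it. I then want to extend $h_A$ to $h$ over $B$. For this I would use the lifting property coming from $X_B\to B$ being a (Reedy) fibration: the pair $\bigl(X_B\times\{0\}\bigr)\cup\bigl(X_A\times\Dt[1]\bigr)\hra X_B\times\Dt[1]$ is a trivial cofibration (it is a pushout–product of the trivial cofibration $X_A\hra X_B$ — no, rather of $A\hra B$ pulled back — with $\{0\}\hra\Dt[1]$, or directly one checks it is a trivial cofibration using \rl{Cart} and \re{remmodcat}), so the square with the already-given map $(\Id_{X_B},\, h_A)$ on the subobject and the projection to $B$ below admits a diagonal filler $h:X_B\times\Dt[1]\to X_B$ over $B$ with $h|_0=\Id_{X_B}$ and $h|_{X_A\times\Dt[1]}=h_A$.

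Finally I would verify that this $h$ actually lands in $Y_B$ at time $1$, i.e. $h|_1(X_B)\subseteq Y_B$, and restricts correctly on $Y_B$. For the first point: given $x\in (X_B)_{n,m}$ and a face $\tau^*x$ that lies in $A$, the path $t\mapsto h(\tau^*x,t)$ is, by the compatibility $h|_{X_A\times\Dt[1]}=h_A$, the path $h_A(\tau^*x,t)$, whose endpoint lies in $Y_A$ because $h_A$ is the deformation retraction onto $Y_A$; hence $\tau^*(h|_1(x))=h|_1(\tau^*x)\in Y_A$ for every such $\tau$, which is exactly the condition defining membership in $(Y_B)_{n,m}$. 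For the restriction on $Y_B$: if $x\in (Y_B)_{n,m}$ I must know $h(x,t)\in (Y_B)$ for all $t$ and $h|_1$ acts as the identity composed with the inclusion — here I would not get "$h$ constant on $Y_B$" for free from the above filler, so I would instead build $h$ more carefully, by also prescribing it on $Y_B\times\Dt[1]$ via $\pr_2$: one enlarges the subobject on which $h$ is prescribed to $\bigl(X_B\times\{0\}\bigr)\cup\bigl(X_A\times\Dt[1]\bigr)\cup\bigl(Y_B\times\Dt[1]\bigr)$, checks the prescriptions agree on overlaps (they do, since on $Y_A=Y_B\cap X_A$ both $h_A$ at points of $Y_A$ and $\pr_2$ give the constant path, using that $h_A$ restricts to $\pr_2$ on $Y_A\times\Dt[1]$), checks this enlarged inclusion into $X_B\times\Dt[1]$ is still a trivial cofibration, and extends as before. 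Then $h$ is the required strong deformation retraction over $B$; by \re{sdr} (b), $Y_B\to B$ is a retract of the fibration $X_B\to B$ hence a fibration, and $i:Y_B\hra X_B$ is a homotopy equivalence over $B$ which is a cofibration, hence (by \re{herem} (b) and the model axioms) a trivial cofibration. The main obstacle I anticipate is precisely the bookkeeping in this last step: verifying that the enlarged subobject of $X_B\times\Dt[1]$ on which $h$ is prescribed is a trivial cofibration, and that the several prescriptions are mutually compatible on the (triple) overlaps — this is where the hypotheses "$A\hra B$ cofibration", "$Y_A\hra X_A$ trivial cofibration", and the fibrancy of $X_B\to B$ all get used simultaneously.
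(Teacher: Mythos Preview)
Your proposal is correct and follows essentially the same route as the paper. The paper defines $Y_B$ exactly as you do, obtains the deformation retraction $h_A$ on $X_A$ from \re{sdr} (c), and then extends to $h:X_B\times\Dt[1]\to X_B$ after prescribing it on precisely your enlarged subobject $(X_B\times\{0\})\cup(X_A\times\Dt[1])\cup(Y_B\times\Dt[1])$; the ``bookkeeping obstacle'' you anticipate is handled by observing that $Y_B\cap X_A=Y_A$ (by construction of $Y_B$), so this subobject is the pushout-product of the cofibration $X_A\sqcup_{Y_A}Y_B\hra X_B$ with the trivial cofibration $\{0\}\hra\Dt[1]$, hence a trivial cofibration by \rl{Cart}.
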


\begin{proof}
Consider simplicial subspace $Y_B\subset X_B$ such that
$(Y_B)_{n,m}$ is the set of all $\tau\in
(X_B)_{n,m}=\Hom(\Box[n,m],X_B)$ such that
$\tau(\tau^{-1}(X_A))\subset Y_A$. Then $Y_B\subset X_B$ is
the largest subspace such that $Y_B\times_B A=Y_A$. By
construction, for every morphism $C\to B$, the set $\Hom_B(C,Y_B)$
can be identified with the set of maps $f\in\Hom_B(C,X_B)$ such
that $f(C\times_B A)\subset Y_A$.

Since $Y_A\subset X_A$ is a trivial cofibration between fibrations
over $A$, it is a strong deformation retract (see \re{sdr} (c)).
Thus there exists a map $g:X_A\times \Dt[1]\to X_A$  over $A$ such
that (i) $g|_{X_A\times\{0\}}=\Id_{X_A}$; (ii)
$g(X_A\times\{1\})\subset Y_A$ and (iii)
$g|_{Y_A\times\Dt[1]}=\pr_1:Y_A\times\Dt[1]\to Y_A\subset X_A$.

Since $Y_B\cap X_A=Y_A$, property (iii) of $g$ implies that $g$
extends to a map $g':(X_A\sqcup_{Y_A}Y_B)\times\Dt[1]\to X_B$ over
$B$ such that $g'|_{Y_B\times\Dt[1]}=\pr_1$. Next property (i) of
$g$ implies that $g'$ extends to a morphism
\[
g'':((X_A\sqcup_{Y_A}Y_B)\times\Dt[1])
\sqcup_{(X_A\sqcup_{Y_A}Y_B)\times\{0\}}(X_B\times\{0\}) \to X_B
\]
over $B$ such that $g''|_{X_B\times \{0\}}=\Id_{X_B}$.

Since $\{0\}\hra\Dt[1]$ is a trivial cofibration, while model
category $sSp$ is Cartesian, we conclude that
\[
((X_A\sqcup_{Y_A}Y_B)\times\Dt[1])\sqcup_{(X_A\sqcup_{Y_A}Y_B)\times\{0\}}
(X_B\times \{0\})\hra X_B\times\Dt[1]
\]
is a trivial cofibration, thus $g''$ extends to a map $h:X_B\times
\Dt[1]\to X_B$ over $B$.

Then $h$ satisfies 
$h|_{X_B\times\{0\}}=g''|_{X_B\times\{0\}}=\Id_{X_B}$,
$h|_{Y_B\times \Dt[1]}=g'|_{Y_B\times \Dt[1]}=\pr_1$ and
$h(X_A\times\{1\})=g(X_A\times\{1\})\subset Y_A$. Thus, by the
construction of $Y_B$, we get that $h(X_B\times\{1\})\subset Y_B$.
In other words, $h$ realizes $Y_B$ as a strong deformation retract
of $X_B$ over $B$. The last assertion follows from \re{sdr} (b).
\end{proof}

\begin{Cor} \label{C:almost1}
Let $X\to B$ be an quasifibration, and let $A\hra B$ be a
cofibration such that $X\times_B A\to A$ is a fibration. Then
there exists a fibrant replacement $Y\to B$ of $X\to B$ such that
$X\times_B A=Y\times_B A$.
\end{Cor}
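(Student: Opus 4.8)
The plan is to imitate the proof of \rco{fibrep}: first replace $X\to B$ by a fibration, then cut that fibration down over the preimage of $A$ by means of \rl{fib}.

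First I would use CM5 to factor $X\to B$ as $X\overset{i}{\lra}X'\overset{p'}{\lra}B$, with $i$ a trivial cofibration and $p'$ a fibration, so that $p'$ is a fibrant replacement of $X\to B$. Since every fibration is a quasifibration (\rl{qfib} (a)), both $X\to B$ and $X'\to B$ are quasifibrations, and $i$ is a weak equivalence between them over $B$; hence by \rl{qfib} (b) the pullback $\tau^*(i)\colon X\times_B A\to X'\times_B A$ is a weak equivalence for every $\tau\colon A\to B$. Being a pullback of the cofibration $i$, this map is also a cofibration, hence a trivial cofibration. I apply this to the given cofibration $A\hra B$.

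Next I would invoke \rl{fib} for the commutative square
\[
\begin{CD}
X\times_B A @>>> X'\\
@VVV @Vp'VV\\
A @>>> B,
\end{CD}
\]
whose upper horizontal arrow is $\tau^*(i)$ followed by the inclusion $X'\times_B A\hra X'$. Its vertical arrows are fibrations ($X\times_B A\to A$ by hypothesis, $p'$ by construction), its horizontal arrows are cofibrations, and the induced map $X\times_B A\to X'\times_B A$ is $\tau^*(i)$, which is a trivial cofibration by the previous step. Thus \rl{fib} produces the largest simplicial subspace $Y\subset X'$ with $Y\times_B A=X\times_B A$, and tells us that $Y\hra X'$ is a trivial cofibration and that $Y\to B$ is a fibration. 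In particular $Y\times_B A=X\times_B A$, as required.

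It remains to see that $Y\to B$ is a fibrant replacement of $X\to B$, i.e.\ that $i$ factors through a trivial cofibration $X\to Y$. Here I would use the explicit description of $Y$ obtained inside the proof of \rl{fib}: for every map $C\to B$, a morphism $f\in\Hom_B(C,X')$ factors through $Y$ precisely when $f(C\times_B A)\subset X\times_B A$, where $X\times_B A$ is regarded as a subspace of $X'$ via the upper arrow of the square above. Taking $C=X$ (over $B$) and $f=i$, one checks from the construction of that square that the restriction of $i$ to $X\times_B A$ is exactly the chosen inclusion $X\times_B A\hra X'$; hence $i$ factors as $X\overset{i'}{\lra}Y\hra X'$ over $B$. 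Finally $i'$ is degree-wise injective because $i$ is, hence a cofibration, and it is a weak equivalence by 2-out-of-3 applied to $i$ and to $Y\hra X'$; so $i'$ is a trivial cofibration and $X\overset{i'}{\lra}Y\lra B$ is the desired fibrant replacement. The only step that is not pure formalism is this last one: one must use the "test-object" characterization of $Y$ from \rl{fib}, not merely its existence, to recognize that the original map $i$ already lands inside $Y$; everything else is bookkeeping with the model-category axioms together with \rl{qfib}.
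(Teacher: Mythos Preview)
Your proof is correct and follows essentially the same approach as the paper: factor via CM5, use \rl{qfib} (b) to see the pullback of $i$ to $A$ is a trivial cofibration, apply \rl{fib} to cut down the fibrant replacement, and conclude by 2-out-of-3. The paper simply asserts that $i$ factors through $Y$, whereas you spell this out via the test-object characterization from inside the proof of \rl{fib}; this is a helpful clarification rather than a different argument.
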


\begin{proof}
Let $X\overset{i}{\lra} Y'\overset{p}{\lra} B$ be any
decomposition of $X\to B$, where $i$ is a trivial cofibration, and
$p$ is a fibration. Since $X\to B$ is quasifibration, $i$ induces
a trivial cofibration $X\times_B A\hra  Y'\times_B A$ over $A$ (by
\rl{qfib} (b)).

Let $Y\subset Y'$ be the largest simplicial subspace such that
$Y\times_B A= X\times_B A$. Then $Y\to B$ is a fibration, and 
$Y\hra Y'$ is a trivial cofibration (see \rl{fib}). Since trivial
cofibration $i$ factors as  a composition $X\hra Y\hra Y'$, the
map $X\hra Y$ is a trivial cofibration by 2-out-of-3.
\end{proof}

\begin{Emp} \label{E:pfhepropd}
\begin{proof}[Proof of \rl{heprop} (d)]
Composition $Y_A\to A\to B$ decomposes as a composition of a
trivial cofibration $Y_A\to X_B$ and a fibration $X_B\to B$. Now
let $Y_B\subset X_B$ be the largest simplicial subspace such that
$Y_B\times_B A=Y_A$. Then $Y_B\to B$ is a fibration by \rl{fib}.
\end{proof}
\end{Emp}

\begin{Emp} \label{E:skel2}
{\bf Notation.} (a) For $X\in sSp$ and $n\geq 0$, we define
the $n$-skeleton $sk_n X$ to be the smallest simplicial subspace
$Y\subset X$ such that $Y_{m,k}=X_{m,k}$ for all $m+k\leq n$. Then
$0=sk_{-1} X\subset sk_0 X\subset \ldots \subset sk_n
X\subset\ldots \subset X$ and $X=\colim_n sk_n X$.

(b) For every $m,k\geq 0$, we denote by $X_{m,k}^{nd}:=X_{m,k}\sm
(sk_{m+k-1}X)_{m,k}$ the set of "non-degenerate bisimplices". Then
$X_{0,0}^{nd}=X_{0,0}$, and for each $n>0$ the $n$-th skeleton
$sk_n X$ is naturally isomorphic to the pushout of  $sk_{n-1} X$
and $\sqcup_{m+k=n, a\in X_{m,k}^{nd}}\Box[m,k]$ over
$\sqcup_{m+k=n, a\in X_{m,k}^{nd}}\p\Box[m,k]$.

%(c) Arguing as in \re{sat}, one shows that every inclusion belongs
%to the saturation of the class of inclusions
%$\p\Box[m,k]\to\Box[m,k],m,k\geq 0$.
\end{Emp}

\begin{Lem} \label{L:fibrepl}
Let $f:X\to K$ be a morphism in $(sSp/K)_{\C{U}}$ such that either
(a) $K\in sSp_{|\C{U}|}$ or (b) $f$ is a quasifibration. Then $f$
has a fibrant replacement  $f':X'\to K$ in $(sSp/K)_{\C{U}}$.
\end{Lem}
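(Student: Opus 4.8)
The plan is to build $X'$ by transfinite induction on the skeletal filtration of $K$ (or rather, of the "shape" of $f$), using the universal property of $\S_{\C{U}}$ together with the extension results of the previous subsection. First I would reduce case (a) to case (b): if $K\in sSp_{|\C{U}|}$, then $X\to K$ with fibers in $Set_{\C{U}}$ has at most $|\C{U}|$ bisimplices at each level, so $X\in sSp_{|\C{U}|}$ as well; replacing $\C{U}$-valued constructions by the equivalent $|\C{U}|$-valued ones (see \re{universes} (a),(d)), it suffices to find any fibrant replacement $X'\to K$ whose fibers have cardinality $\leq|\C{U}|$, and this follows from the small object argument using the (countably many, $\C{U}$-small) generating trivial cofibrations of \re{rfib} (a): the resulting $X'$ is built from $X$ by attaching cells indexed by a set of size at most $|\C{U}|\cdot|K|$, hence lies in $(sSp/K)_{|\C{U}|}\simeq(sSp/K)_{\C{U}}$. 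So the real content is case (b), where $K$ may be arbitrarily large but $f$ is a quasifibration.

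For case (b) the strategy is to construct $f'$ compatibly over the skeleta $sk_n K$. Set $K_n:=sk_n K$ and $X^{(n)}:=X\times_K K_n$. I would inductively produce fibrant replacements $f'_n:X'_n\to K_n$ of $f^{(n)}:X^{(n)}\to K_n$ in $(sSp/K_n)_{\C{U}}$, together with compatible inclusions $X'_{n-1}\hra X'_n\times_{K_n}K_{n-1}$ realizing $X'_{n-1}\to K_{n-1}$ as the pullback. The base case $n=0$ (actually $n=-1$, trivial) is handled by \rl{fibrepl}(a) applied to the discrete, hence $|\C{U}|$-small, space $K_0$ — or directly, since $sk_0 K$ is a disjoint union of points and the fiber is already a $\C{U}$-small Kan-replaceable space. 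For the inductive step, recall from \re{skel2}(b) that $K_n$ is the pushout of $K_{n-1}$ and $\coprod_{m+k=n,\,a\in K^{nd}_{m,k}}\Box[m,k]$ along $\coprod\p\Box[m,k]$. Since $f$ is a quasifibration, each $\p\Box[m,k]\to K$ pulls back $X$ to a quasifibration over $\p\Box[m,k]$, whose restriction over $K_{n-1}\cap$ (that is, over $\p\Box[m,k]$ viewed inside $K_{n-1}$) already has the fibrant replacement $f'_{n-1}$ in hand; because $\p\Box[m,k]$ is $|\C{U}|$-small I can, using \rco{almost1} (whose hypothesis "pullback of a cofibration is a cofibration" holds in $sSp$ by \rd{rfib}(c)) and \rl{fibrepl}(a), extend this to a fibrant replacement over all of $\Box[m,k]$ which agrees with $f'_{n-1}$ over the part already replaced. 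Then glue: by \rl{pushout}, pushing out the $X'_{n-1}$ along these $\Box[m,k]$-pieces over the $\p\Box[m,k]$-pieces produces a quasifibration $X''_n\to K_n$ equipped with a weak equivalence from $X^{(n)}$ and restricting to $f'_{n-1}$ over $K_{n-1}$; applying \rco{almost1} once more (with $A=K_{n-1}$, over which $X''_n$ is already fibrant) upgrades $X''_n\to K_n$ to an honest fibration $f'_n:X'_n\to K_n$ without disturbing the restriction over $K_{n-1}$. Fiber-cardinality stays $\leq|\C{U}|$ at every step because we only ever attach $\C{U}$-small pieces. Finally, set $X':=\colim_n X'_n$ and $f':=\colim_n f'_n$; since fibrations in $sSp$ are detected by the RLP against the $\C{U}$-many generators of \re{rfib}(a), each of which has $\p\Box[n,m]$ or a finite union compact (i.e., factoring through some $sk_N K$), $f'$ is again a fibration, $X\to X'$ is a weak equivalence (a colimit of weak equivalences along cofibrations, using left properness of $sSp$), and the fibers of $f'$ are colimits of chains of $\C{U}$-small sets, hence still of cardinality $\leq|\C{U}|$, so $f'\in(sSp/K)_{\C{U}}$.

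I expect the main obstacle to be the bookkeeping in the inductive step: one must extend the already-constructed fibrant replacement over $K_{n-1}$ (and over the overlaps $\p\Box[m,k]$) to the new cells \emph{compatibly}, i.e.\ so that the pullback relations $X'_{n-1}=X'_n\times_{K_n}K_{n-1}$ hold on the nose rather than merely up to weak equivalence, and this is exactly where \rl{fib}/\rco{almost1} — the "largest simplicial subspace with prescribed restriction" construction — is essential; getting the hypotheses of \rco{almost1} (that $X\times_B A\to A$ is already a fibration) to hold at each stage is the delicate point, and is why the induction is organized so that at stage $n$ the relevant $A$ is $K_{n-1}$, over which fibrancy was arranged at stage $n-1$. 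A secondary point requiring care is the passage to the colimit: one must know that the generating trivial cofibrations (and the cofibrations $\p\Box[n,m]\hra\Box[n,m]$) have compact enough domains that RLP and weak equivalence are preserved under the filtered colimit along the skeleta, which follows because $\Box[n,m]$ and $\p\Box[n,m]$ have only finitely many non-degenerate bisimplices and $K=\colim_n sk_n K$ with each transition map a cofibration.
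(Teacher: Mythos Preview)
Your proposal is correct and follows essentially the same approach as the paper: part (a) via cardinality control on the small object argument, then part (b) by skeletal induction on $K$, using (a) on each new cell $\Box[m,k]$ together with \rl{pushout} and \rco{almost1} to extend the fibrant replacement compatibly. Two minor points: your phrase ``reduce case (a) to case (b)'' is backwards --- in fact (b) uses (a), applied to the $|\C{U}|$-small bases $\Box[m,k]$ --- and the paper streamlines your inductive step by first forming the single global pushout $X[i]:=X\sqcup_{X|_{sk_i K}}X'[i]$ over all of $K$, so that its restriction to $sk_i K$ is already the fibration $f'[i]$ and only one application of \rco{almost1} (per cell) is needed rather than two.
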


\begin{proof}
(a) Since $(sSp/K)_{\C{U}}\to (sSp/K)_{|\C{U}|}$ is an equivalence
of categories, it is enough to show the existence of a fibrant
replacement $f'$ in $(sSp/K)_{|\C{U}|}$. Since $K\in
sSp_{|\C{U}|}$ and $|\C{U}|\times |\C{U}|=|\C{U}|$, we conclude
that $X\in sSp_{|\C{U}|}$. Moreover, using $|\C{U}|\times
|\C{U}|=|\C{U}|$ again, we get that fibrant replacement $f':X'\to
K$, constructed in the proof of \rt{Reedy}, satisfies $X'\in
sSp_{|\C{U}|}$, thus $f'\in (sSp/K)_{|\C{U}|}$.

(b) By induction on $i$, we are going to construct a fibrant
replacement $f'[i]:X'[i]\to sk_i K$ of $f|_{sk_i K}:X|_{sk_i K}\to
sk_i K$ such that $f'[i]$ belongs to $(sSp/sk_i K)_{\C{U}}$ and
$f'[i+1]|_{sk_i K}=f'[i]$.

Assuming this is done, we set $f':=\colim_i f'[i]:X'\to K$. Then
$f'$ satisfies $f'|_{sk_i K}=f'[i]$, thus $f'$ is a fibration and
$f'\in (sSp/K)_{\C{U}}$. Moreover, since $X|_{sk_i K}\hra X'[i]$
is a weak equivalence for all $i$, we get that $X\hra X'$ is a
weak equivalence as well, thus $f'$ is a fibrant replacement of
$f$.

Assume that $f'[i]$ was already constructed. Set
$X[i]:=X\sqcup_{X|_{sk_i K}}X'[i]$. Then it follows from
\rl{pushout} that $f[i]:X[i]\to K$ is a quasifibration and that
$X\to X[i]$ is a weak equivalence. By construction,  $f[i]|_{sk_i
K}:X[i]|_{sk_i K}\to sk_i K$ is a fibration $f'[i]$, and we want
to show that $f[i]|_{sk_{i+1} K}$ has a fibrant replacement
$f'[i+1]$, all of whose fibers are in $Set_{\C{U}}$, such that
$f'[i+1]|_{sk_i K}=f[i]|_{sk_i K}$.

Recall that $sk_{i+1} K=sk_i K\sqcup_{\sqcup {\p\Box[n,m]}}{\sqcup
{\Box[n,m]}}$. Thus it remains to show that for every quasifibration 
$f:X\to\Box[n,m]$ from $(sSp/\Box[n,m])_{\C{U}}$, whose restriction to $\p\Box[n,m]$
is a fibration, there exists a fibrant replacement
$f':X'\to\Box[n,m]$ from $(sSp/\Box[n,m])_{\C{U}}$ such that
$f'|_{\p\Box[n,m]}=f|_{\p\Box[n,m]}$.

Since $\Box[n,m]\in sSp_{|\C{U}|}$, it follows from (a) that there
exists a fibrant replacement $f'':X''\to\Box[n,m]$ of $f$ in
$(sSp/\Box[n,m])_{\C{U}}$. Moreover, since $f$ is a
quasifibration, there exists $X'\subset X''$ such that
$X'|_{\p\Box[n,m]}=X|_{\p\Box[n,m]}$, and $f':=f''|_{X'}$ is a
fibrant replacement of $f$ (see the proof of \rco{almost1}). Then
 $f'\in (sSp/\Box[n,m])_{\C{U}}$ and $f'|_{\p\Box[n,m]}=f|_{\p\Box[n,m]}$.
\end{proof}

\begin{Cor} \label{C:almost}
In the situation \rco{almost1}, assume that the quasifibration
$X\to B$ is in $(sSp/B)_{\C{U}}$. Then $Y\to B$ can also chosen to
be in $(sSp/B)_{\C{U}}$.
\end{Cor}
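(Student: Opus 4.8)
The plan is to rerun the construction from the proof of \rco{almost1}, but starting from a decomposition of $X\to B$ whose total space already lies in $(sSp/B)_{\C{U}}$. Since $X\to B$ is a quasifibration belonging to $(sSp/B)_{\C{U}}$, \rl{fibrepl} (b) supplies a fibrant replacement $p:Y'\to B$ of $X\to B$ with $Y'\to B$ in $(sSp/B)_{\C{U}}$; write the associated decomposition as $X\overset{i}{\hra}Y'\overset{p}{\lra}B$ with $i$ a trivial cofibration and $p$ a fibration. I would feed exactly this decomposition into the argument of \rco{almost1}. Namely, $i$ is a weak equivalence between the quasifibrations $X\to B$ and $Y'\to B$ (the latter being a fibration, hence a quasifibration by \rl{qfib} (a)), so by \rl{qfib} (b) it restricts to a trivial cofibration $X\times_B A\hra Y'\times_B A$ over $A$; letting $Y\subset Y'$ be the largest simplicial subspace with $Y\times_B A=X\times_B A$, \rl{fib} applies (its hypotheses being satisfied because $X\times_B A\to A$ is a fibration by the standing assumption of \rco{almost1}) and gives that $Y\to B$ is a fibration and $Y\hra Y'$ is a trivial cofibration. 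Since $i$ is a monomorphism it factors through $X\hra Y\hra Y'$, whence $X\hra Y$ is a trivial cofibration by 2-out-of-3. Thus $Y\to B$ is a fibrant replacement of $X\to B$ with $Y\times_B A=X\times_B A$, precisely as in \rco{almost1}.

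The one new point is that $Y\to B$ inherits the size bound, and this is immediate from the definition of $Set_{\C{U}}$. For all $n,m$ we have $Y_{n,m}\subset Y'_{n,m}$, so every fiber of $Y_{n,m}\to B_{n,m}$ is a subset of the corresponding fiber of $Y'_{n,m}\to B_{n,m}$, which belongs to $Set_{\C{U}}$; as $Set_{\C{U}}$, the category of subsets of $\C{U}$, is trivially closed under passing to subsets, the fibers of $Y\to B$ again lie in $Set_{\C{U}}$. Hence $Y\to B$ belongs to $(sSp/B)_{\C{U}}$, as desired.

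I do not anticipate any real difficulty: essentially all the content is already in \rco{almost1} and \rl{fibrepl} (b), and the only extra ingredient is the closure of $Set_{\C{U}}$ under subobjects --- the very device used implicitly in the proof of \rl{fibrepl} (b). The sole subtlety worth checking is that the $Y$ produced by \rl{fib} satisfies $Y\times_B A=X\times_B A$ on the nose, not just up to isomorphism; this is guaranteed by the maximality clause in \rl{fib}, which forces $X$ (regarded as a subspace of $Y'$ via the monomorphism $i$) to be contained in $Y$.
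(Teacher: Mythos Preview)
Your proposal is correct and follows essentially the same route as the paper's proof: invoke \rl{fibrepl} (b) to choose the intermediate fibrant replacement $Y'\to B$ already in $(sSp/B)_{\C{U}}$, and then observe that the subspace $Y\subset Y'$ produced by \rl{fib} inherits the size bound. The paper compresses all of this into a single sentence, but the content is identical; your only addition is making explicit the (easy) point that $Set_{\C{U}}$ is closed under subsets and that $X$ factors through $Y$.
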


\begin{proof}
If $X\to B$ is in $(sSp/B)_{\C{U}}$, then $Y'\to B$ from the proof
of \rco{almost1} can be chosen to be in $(sSp/B)_{\C{U}}$ by
\rl{fibrepl}, thus $Y\to B$ is also in $(sSp/B)_{\C{U}}$.
\end{proof}

%\begin{Cor} \label{C:repl}
%Let $f:Y_A\to A$ be a fibration and let $A\to B$ be trivial
%cofibration. Then there exists a fibration $g:Y_B\to B$, whose
%restriction to $A$ is $f$.
%\end{Cor}
%\begin{proof}
%Composition $Y_A\to A\to B$ decomposes as a composition of a
%trivial cofibration $Y_A\to X_B$ and a fibration $X_B\to B$. Now
%let $Y_B\subset X_B$ be the largest subobject such that
%$Y_B\times_B A=Y_A$. Then $Y_B\to B$ is a fibration by \rl{fib}.
%\end{proof}

%\begin{Emp}
%{\bf Remark.} The same argument shows that Lemmas \ref{L:fib},
%\ref{L:fibrepl}  and \rco{almost} also hold for simplicial sets.
%\end{Emp}
\section{Complements} \label{S:proofs} 
\subsection{Discrete iterated cylinder}

\begin{Emp} \label{E:fn}
{\bf Observations.}  (a) Every map $p:X\to B\times F[m]$ in $sSp$
induces a map $p_n:X_n\to B_n\times F[m]_n$ for all $n$. Since
$F[m]_n$ decomposes as $\coprod_{\tau:[n]\to[m]}\pt$,
each $X_n$ decomposes as a disjoint union
$X_n=\coprod_{\tau:[n]\to[m]}X_{\tau}$, and $p_n$ decomposes as a
disjoint union of $p_{\tau}:X_{\tau}\to B_n$.

(b) By definition, all fibers of $p$ belong to $Set_{\C{U}}$ if
and only of all fibers of each $p_{\tau}$ belong to $Set_{\C{U}}$.
%In other words, $p\in (sSp/B\times F[n])_{\C{U}}$ if and only if
%$p_{\tau}\in (sSp/B_n)_{\C{U}}$ for each $\tau:[n]\to[m]$.

(c) Notice that $p_n$ is a quasifibration if and only if each
$p_{\tau}:X_{\tau}\to B_n$ is a quasifibration. Using \rl{qfib0}
we conclude that $p$ is a quasifibration if and only if each
$p_{\tau}$ is a quasifibration.

(d) Recall that a quasifibration  $p:X\to B\times F[m]$ is left if
and only if each morphism  $g_n:X_n\to X_0\times_{(B\times
F[m])_0}(B\times F[m])_n$ is a weak equivalence. Note that $g_n$
decomposes as a disjoint union of morphisms $g_{\tau}:X_{\tau}\to
X_{\tau|_0}\times_{B_0}B_n$. Thus $f$ is left if and only if each
$g_{\tau}$ is a weak equivalence.

(e) A morphism $f:X\to Y$ between left quasifibrations over
$B\times F[m]$ is a weak equivalence if and only if $f|_i:X|_i\to
Y|_i$ is a weak equivalence over $B$ for each $i=1\ldots,m$.

\begin{proof}
By definition, $f$ is a weak equivalence if and only if
$f_n:X_n\to Y_n$ is a weak equivalence for all $n$, which by (a)
is equivalent to the assertion that $f_{\tau}:X_{\tau}\to
Y_{\tau}$ is a weak equivalence for all $\tau:[n]\to[m]$.
Similarly, $f|_i$ is a weak equivalence if and only if
$f_{\tau}:X_{\tau}\to Y_{\tau}$ is a weak equivalence for all
$\tau:[n]\to\{i\}\subset [m]$. This implies "the only if"
assertion.

To show the converse, notice that by (d) and 2-out-of-3,
$f_{\tau}$ is a weak equivalence if and only if 
$f_{\tau|_0}:X_{\tau|_0}\times_{B_0}B_n\to
Y_{\tau|_0}\times_{B_0}B_n$ is a weak equivalence. Since by (b)
the maps $X_{\tau|_0}\to B_0$ and $Y_{\tau|_0}\to B_0$ are
quasifibrations, if follows from \rco{hfibprod} that $f$ is a weak
equivalence if each $f_{\tau|_0}:X_{\tau|_0}\to Y_{\tau|_0}$ is a
weak equivalence or, equivalently, if $f_{\tau}:X_{\tau}\to
Y_{\tau}$ is a weak equivalence for all $\tau:[0]\to[m]$.
\end{proof}
\end{Emp}

\begin{Emp} \label{E:itercyl}
{\bf Discrete iterated cylinder.} For $B\in sSp$ and a sequence of
morphisms $f: K^{(0)}\overset{f_1}{\to} \ldots
\overset{f_{m}}{\to}K^{(m)}$ in $sSp/B$, we define recursively a
{\em discrete iterated cylinder} $Cyl^{disc}(f)\to B\times F[m]$
and morphisms $\iota_j:K^{(j)}\times e^j F[m-j]\to Cyl^{disc}(f)$
over $B\times F[m]$ for all $j=0,\ldots, m$ as follows.

If $m=0$, we set $Cyl^{disc}(f):=K^{(0)}$, and put $\iota_0=\Id$.
If $m\geq 1$, we denote by $f(1)$ the sequence $f(1):
K^{(1)}\overset{f_2}{\to}\ldots \overset{f_{m}}{\to}K^{(m)}$, and
assume by induction that we have defined an iterated cone
$Cyl^{disc}(f(1))\to B\times F[m-1]$ and a morphism $\iota_j:
K^{(j)}\times e^{j}F[m-j]\to e^1 Cyl^{disc}(f(1))$ for all
$j=1,\ldots,m$.

We define $Cyl^{disc}(f)\to B\times F[m]$ to be the pushout
\begin{equation} \label{Eq:itercyl}
Cyl^{disc}(f):=(K^{(0)}\times F[m])\sqcup_{(K^{(0)}\times e^1
F[m-1])}e^1 Cyl^{disc}(f(1)),
\end{equation}
where the map  $K^{(0)}\times e^1 F[m-1]\to e^1 Cyl^{disc}(f(1))$
is defined to be the composition
\[
K^{(0)}\times e^1 F[m-1]\overset{f_1}{\lra} K^{(1)}\times e^1
F[m-1]\overset{\iota_1}{\lra}e^1 Cyl^{disc}(f(1)).
\]

Finally, we define $\iota_0:K^{(0)}\times F[m]\hra Cyl^{disc}(f)$
be the natural embedding.
\end{Emp}

\begin{Lem} \label{L:disc}
If each $K^{(i)}\to B$ is a (left) quasifibration in
$(sSp/B)_{\C{U}}$, then  $Cyl^{disc}(f)\to B\times F[m]$ is a
(left) quasifibration in $(sSp/B\times F[m])_{\C{U}}$.
\end{Lem}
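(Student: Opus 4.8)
The plan is to induct on $m$, using the degreewise decomposition of maps into $B\times F[m]$ from \re{fn} applied to the recursive formula \form{itercyl}. (One cannot simply quote \rl{pushout}, since the gluing map $K^{(0)}\times e^1F[m-1]\to e^1Cyl^{disc}(f(1))$ need not be a weak equivalence; so I would argue componentwise.) The case $m=0$ is the hypothesis, since then $Cyl^{disc}(f)=K^{(0)}$ over $B\times F[0]=B$. For $m\geq 1$ set $f(1):K^{(1)}\overset{f_2}{\to}\cdots\overset{f_m}{\to}K^{(m)}$ as in \re{itercyl}, so that by induction $Cyl^{disc}(f(1))\to B\times F[m-1]$ is a (left) quasifibration in $(sSp/B\times F[m-1])_{\C{U}}$.

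The first real step is to compute the pushout \form{itercyl} degreewise (colimits in $sSp$ are pointwise) and to identify, for each $n$ and each $\tau:[n]\to[m]$, the $\tau$-component $p_\tau\colon Cyl^{disc}(f)_\tau\to B_n$ of $Cyl^{disc}(f)\to B\times F[m]$. Since a monotone $\tau:[n]\to[m]$ factors through $e^1:[m-1]\to[m]$ precisely when $\tau(0)\geq 1$, I expect two cases. If $\tau(0)=0$, the $\tau$-components of both $K^{(0)}\times e^1F[m-1]$ and $e^1Cyl^{disc}(f(1))$ are empty, whence $Cyl^{disc}(f)_\tau\cong K^{(0)}_n$ over $B_n$. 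If $\tau(0)\geq 1$, then with $\tau':=(e^1)^{-1}\circ\tau\colon[n]\to[m-1]$ the $\tau$-component of $K^{(0)}\times e^1F[m-1]$ maps by the identity onto the $\tau$-component $K^{(0)}_n$ of $K^{(0)}\times F[m]$, the pushout collapses, and $Cyl^{disc}(f)_\tau\cong Cyl^{disc}(f(1))_{\tau'}$ over $B_n$.

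Granting this, everything reduces componentwise via \re{fn}. When $\tau(0)=0$, $p_\tau$ is the map $(K^{(0)}\to B)_n$: a quasifibration with fibers in $Set_{\C{U}}$ by \rl{qfib0} and $K^{(0)}\to B\in(sSp/B)_{\C{U}}$; and in the left case the comparison map $g_\tau$ of \re{fn}(d) becomes, under the identification, the map $K^{(0)}_n\to K^{(0)}_0\times_{B_0}B_n$ induced by $\dt^0$, a weak equivalence by \rd{lqfib}. When $\tau(0)\geq 1$, $p_\tau$ is the $\tau'$-component of $Cyl^{disc}(f(1))\to B\times F[m-1]$: a quasifibration with fibers in $Set_{\C{U}}$ by induction and \re{fn}(b),(c); and since $\tau'|_0=(e^1)^{-1}\circ(\tau|_0)$, the map $g_\tau$ is the $\tau'$-component of the comparison map for $Cyl^{disc}(f(1))$, again a weak equivalence by induction and \re{fn}(d). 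Feeding this back into \re{fn}(b),(c),(d) for $Cyl^{disc}(f)\to B\times F[m]$ gives the lemma.

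I expect the degreewise analysis of the pushout \form{itercyl} to be the only step needing care; after it, every assertion is a property already established for $K^{(0)}\to B$ or, by induction, for $Cyl^{disc}(f(1))\to B\times F[m-1]$. So the main obstacle is bookkeeping — keeping straight the maps $\tau$, $\tau'$, $\tau|_0$ and the way the copies of $K^{(0)}_n$ are glued — rather than anything conceptual.
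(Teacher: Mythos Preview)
Your proof is correct and follows essentially the same approach as the paper: both induct on $m$, compute the $\tau$-component of $Cyl^{disc}(f)$ via the pushout \form{itercyl} by splitting on whether $\tau(0)=0$ or $\tau(0)\geq 1$, and then feed the result into \re{fn}(b),(c),(d). The only cosmetic difference is that the paper unwinds the induction to the closed formula $Cyl^{disc}(f)_\tau=K^{(\tau(0))}_n$ and reads off the properties directly from the hypotheses on the $K^{(i)}$, whereas you carry the induction hypothesis on $Cyl^{disc}(f(1))$ as a black box in the $\tau(0)\geq 1$ case.
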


\begin{proof}
We are going to apply \re{fn} to the  projection $p:Cyl^{disc}(f)\to
B\times F[m]$.

We claim that for each $\tau:[n]\to[m]$, we have
$Cyl^{disc}(f)_{\tau}=K^{(\tau(0))}_{n}$. The proof goes by
induction. The assertion is obvious, if $m=0$. Next, if $m\geq 1$,
then \form{itercyl} implies that $Cyl^{disc}(f)_{\tau}$ equals
$K^{(0)}_n$, if $\tau(0)=0$, and equals $Cyl^{disc}(f(1))_{\tau'}$,
where $\tau':[n]\to [m-1]$ is given by $\tau'(i)=\tau(i)-1$, if
$\tau(0)\geq 1$. By induction hypothesis, in the second case
$Cyl^{disc}(f)_{\tau}$ equals
$K^{(\tau'(1))}_{n}=K^{(\tau(0))}_{n}$.

By the proven above and \re{fn} (a), each fiber of
$p$ is a fiber of some
$K^{(\tau(0))}_{n}\to B_n$. Thus it belongs to $Set_{\C{U}}$, because
$K^{(\tau(0))}\to B$ is in $(sSp/B)_{\C{U}}$.

Moreover, since each $K^{(\tau(0))}\to B$ is a quasifibration,
each $K^{(\tau(0))}_n\to B_n$ is a quasifibration (by \rl{qfib0}).
Therefore $p$ is a quasifibration by \re{fn} (c). Similarly, each projection
$Cyl^{disc}(f)_{\tau}\to Cyl^{disc}(f)_{\tau|_0}\times_{B_0}B_n$
is simply $K^{(\tau(0))}_n\to K_0^{(\tau(0))}\times_{B_0}B_n$.
Therefore it is a weak equivalence, because $K^{(\tau(0))}\to B$
is left. The assertion now follows from  \re{fn}
(d).
\end{proof}

\begin{Lem} \label{L:lfibr}
Let $f:X\to Y$ be a map, and $E\to Y\times F[n]$ be a left
fibration.

(a) The map $p:\C{Map}_{Y\times F[n]}(X\times F[n],E)\to
\C{Map}_{Y\times F[n]}(X,E)$, induced by the inclusion
$e^0:F[0]\hra F[n]$, is a trivial fibration.

(b) The map
\[
q:\C{Map}_{Y\times F[n]}(X\times F[n],E)\to \C{Map}_{Y\times
F[n]}(X\times (e^0 F[1]\sqcup_{e^1 F[0]} e^1 F[n-1]),E), \]
induced by the inclusion $e^0F[1]\sqcup_{e^1 F[0]}e^1 F[n-1]\hra
F[n]$, is a trivial fibration.
\end{Lem}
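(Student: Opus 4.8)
The plan is to reduce both parts to the statement that $E \to Y \times F[n]$ is a left fibration together with the Cartesian structure of the Reedy model category. First I would observe that $\C{Map}_{Y\times F[n]}(-,E)$ turns pushouts of cofibrations (over $Y \times F[n]$) into fibered products of fibrations: if $A \hra B$ is a cofibration over $Y\times F[n]$ and $E \to Y\times F[n]$ is a fibration, then by \re{rfib} (c) the map $\C{Map}_{Y\times F[n]}(B,E)\to \C{Map}_{Y\times F[n]}(A,E)$ is a fibration, and if $C \hra B$ is another cofibration with $C \cup A = B$, then $\C{Map}_{Y\times F[n]}(B,E) = \C{Map}_{Y\times F[n]}(A,E)\times_{\C{Map}_{Y\times F[n]}(A\cap C,E)}\C{Map}_{Y\times F[n]}(C,E)$. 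With this in hand, for (a) I would factor the inclusion $e^0: F[0] \hra F[n]$ through the sequence of cofibrations obtained from the "prism decomposition" $F[n] = F[1]\sqcup_{F[0]}F[1]\sqcup_{F[0]}\cdots\sqcup_{F[0]}F[n-1]$ realized via the maps $e^0, e^1$; equivalently, I would induct on $n$, the case $n=1$ being the definition of a left fibration applied after the internal-hom manipulation $\C{Map}_{Y\times F[1]}(X\times F[1],E) = \C{Map}_{Y}(X, \C{Map}_{F[1]}(F[1], \text{fiberwise}))$ — more precisely, I would use \rl{left} (b) to promote $E\to Y\times F[n]$ to a left fibration after applying $(-)^{X}$-type functors, and then invoke \rl{leftcart} (b), which says exactly that $E^{F[n]} \to E\times_{Y\times F[n]}(Y\times F[n])^{F[n]}$ restricted along $e^0$ is a trivial fibration.

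The cleanest route, which I would adopt, is: apply \rl{left} (b) to conclude that the pullback of $E$ along $X\times F[n] \to Y\times F[n]$ — call it $E_X \to X\times F[n]$ — is a left fibration, and note $\C{Map}_{Y\times F[n]}(X\times F[n],E) = \C{Map}_{X\times F[n]}(X\times F[n], E_X) = $ the space of sections, while $\C{Map}_{Y\times F[n]}(X,E) = \C{Map}_{X\times F[n]}(X, E_X) = (E_X)|_{e^0}$-sections over $X$. Then (a) is precisely \rl{leftcart} (b) (the characterization of left fibrations via $X^{F[n]}\to X\times_Y Y^{F[n]}$ being a trivial fibration) applied to $E_X \to X\times F[n]$, after taking fibers over the tautological section and passing to zero spaces as in \re{rfib} (c). For part (b), I would factor the inclusion $e^0 F[1]\sqcup_{e^1 F[0]} e^1 F[n-1] \hra F[n]$: first use that $e^0 F[1]\cup e^1 F[n-1] = F[n]$ fails for $n\ge 2$ (these only share the vertex $e^1 F[0] = \{1\}$ and together span $\{0,1\}\cup\{1,\dots,n\}$), so actually this subcomplex is the union of the edge $\{0,1\}$ and the face $\{1,\dots,n\}$. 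I would show the inclusion of this subcomplex into $F[n]$ is an anodyne-type cofibration, but more efficiently I would just note $q$ factors as $\C{Map}_{Y\times F[n]}(X\times F[n],E) \overset{p}{\to} \C{Map}_{Y\times F[n]}(X,E)$ composed with maps that are themselves trivial fibrations by the $n=1$ and $n-1$ cases (induction), using the pushout-to-pullback principle above; by CM3 and 2-out-of-3 the composite $q$ is a trivial fibration.

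The main obstacle will be part (b): unwinding which subcomplex $e^0 F[1]\sqcup_{e^1 F[0]} e^1 F[n-1]$ actually is inside $F[n]$ and exhibiting the factorization of its inclusion so that each stage is handled by (a) (or by the $n=1$ and $n-1$ instances). Concretely, I expect to write $F[n] = (e^0 F[1]\cup e^1 F[n-1]) \cup_{?} (\text{the remaining faces})$ and check that adding those remaining cells amounts to a pushout along a cofibration against which $\C{Map}_{Y\times F[n]}(-,E)$ produces a trivial fibration — this is where the left-fibration hypothesis on $E$ (rather than mere fibrancy) is essential, via \rl{leftcart}. Once the combinatorics of this filtration of $F[n]$ is set up, everything else is a formal consequence of the Cartesian/proper structure established in \rt{Reedy}, \re{rfib}, and \rl{leftcart}.
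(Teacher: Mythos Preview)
Your approach to (a) is essentially the paper's: apply \rl{left} (b) (not (a)---you want the exponential, not the pullback) to obtain that $E^X\to (Y\times F[n])^X$ is a left fibration, then invoke \rl{leftcart} (b) and take the fiber over $f\times\Id_{F[n]}$.

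For (b), however, there is a genuine confusion. You write that $q$ ``factors as $p$ composed with maps that are themselves trivial fibrations''. This has the direction backwards: restriction maps go from larger to smaller subcomplexes, so it is $p$ that factors through $q$, namely $p=r\circ q$ where
\[
r:\C{Map}_{Y\times F[n]}(X\times (e^0F[1]\sqcup_{e^1F[0]}e^1F[n-1]),E)\to\C{Map}_{Y\times F[n]}(X,E)
\]
is the further restriction to $e^0F[0]$. The paper's argument is then: $q$ is a fibration since the inclusion is a cofibration; $p$ is a trivial fibration by (a); and $r$ is a weak equivalence because it decomposes as two maps (restriction to $X\times e^0F[1]$, then to $X\times e^0F[0]$), the first being a pullback of $\C{Map}_{Y\times F[n]}(X\times e^1F[n-1],E)\to\C{Map}_{Y\times F[n]}(X\times e^1F[0],E)$ and the second being an instance of (a) for $n=1$---both trivial fibrations by (a). Then 2-out-of-3 gives that $q$ is a weak equivalence.

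Your alternative plan---filtering $F[n]$ upward from $e^0F[1]\cup e^1F[n-1]$ and showing each attachment induces a trivial fibration on $\C{Map}$---is in principle feasible (the inclusion is left anodyne in the appropriate sense), but carrying it out requires nontrivial combinatorics on the simplices of $F[n]$ containing $0$ and some vertex $\geq 2$. The paper's trick of composing \emph{past} the target of $q$ down to the single vertex avoids this entirely and reduces everything to three applications of (a).
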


\begin{proof}
(a) Since $E\to Y\times F[n]$ is a left fibration, the map $E^X\to
(Y\times F[n])^X$ is a left fibration by \rl{left} (b). Then by
\rl{leftcart} (b), the map
\[
E^{X\times F[n]}\to E^X\times_{(Y\times F[n])^X}(Y\times
F[n])^{X\times F[n]}
\]
is a trivial fibration. Taking fiber over $f\times\Id_{F[n]}\in
(Y\times F[n])^{X\times F[n]}$, we get the assertion.

(b) Since $e^0 F[1]\sqcup_{e^1 F[0]}e^1 F[n-1]\hra F[n]$ is a
cofibration, the map $q$ is a fibration. Thus it remains to show
that $q$ is a weak equivalence. Consider map
\[
r:\C{Map}_{Y\times F[n]}(X\times (e^0F[1]\sqcup_{e^1F[0]}e^1
F[n-1]),E)\to \C{Map}_{Y\times F[n]}(X,E),
\]
induced by the inclusion $F[0]\overset{e^0}{\lra} e^0 F[1]\hra e^0
F[1]\sqcup_{e^1 F[0]}e^1 F[n-1]$. Since $r\circ q=p$, it is a weak
equivalence by (a). Thus it remains to show that $r$ is a weak
equivalence. But $r$ can be written as a composition of
\[
\C{Map}_{Y\times F[n]}(X\times (e^0F[1]\sqcup_{e^1 F[0]}e^1
F[n-1]),E)\to \C{Map}_{Y\times F[n]}(X\times e^0F[1],E)
\]
and $(e^0)^*:\C{Map}_{Y\times F[n]}(X\times e^0 F[1],E)\to \C{Map}_{Y\times
F[n]}(X,E)$, so it remains to show that both maps are trivial
fibrations. Since the first map is the pullback of the morphism
$\C{Map}_{Y\times F[n]}(X\times e^1 F[n-1],E)\to \C{Map}_{Y\times
F[n]}(X\times e^1 F[0],E)$, both maps are trivial fibrations by
(a).
\end{proof}

\subsection{Proofs} In this subsection we prove Propositions \ref{P:spaces}, 
\ref{P:sn} and \rco{presh}. The most difficult part is \rp{sn} (b), whose proof 
is carried out in \re{step1}-\re{step2} and \re{step4}-\re{step6}.  
We denote $\S_{\C{U}}$ simply by $\S$.

\begin{Emp} \label{E:pfspacesa}
{\bf Proof of \rp{spaces} (a).} By \re{rfib} (a), we have to show
that for every trivial cofibration $i:A\to B$ in $sSp$ with
$B=\Box[n,m]$ the morphism $i^*:\Hom(B,\S)\to\Hom(A,\S)$
is surjective. By \rl{univ} this means that every left fibration
$Y_A\to A$ in $(sSp/A)_{\C{U}}$ extends to a left fibration
$Y_B\to B$ in $(sSp/B)_{\C{U}}$.

Note that composition $j:Y_A\to A\to B$ belongs to
$(sSp/B)_{\C{U}}$, and $B\in sSp_{|\C{U}|}$. Then $j$
decomposes as a composition of a trivial cofibration $Y_A\to X_B$
and a fibration $X_B\to B$ in $(sSp/B)_{\C{U}}$ (by \rl{fibrepl} (a)).
Now let $Y_B\subset X_B$ be the largest simplicial subspace such
that $Y_B\times_B A=Y_A$ (see \rl{fib}). Then $Y_B\to B$ belongs
to $(sSp/B)_{\C{U}}$, and we claim that $Y_B\to B$ is a left
fibration.

Let $X_A:=X_B\times_A B$. Since $A\to B$ is a weak equivalence and
the Reedy model structure is proper, the inclusion $X_A\to X_B$ is
a weak equivalence. Since $Y_A\to X_A\to X_B$ is also a weak
equivalence, we conclude that $Y_A\to X_A$ is a weak equivalence.
Then by \rl{fib}, the projection $Y_B\to B$ is a fibration, while
$Y_A\hra Y_B$ is a weak equivalence. Since $Y_A\to A$ is a left
fibration, $Y_B\to B$ is a left fibration by \rl{lqf}.
\end{Emp}

%\begin{Emp} \label{E:repl}
%{\bf Remark.} The argument of \re{pfspacesa} also implies that
%if $f:Y_A\to A$ is a fibration and  $A\to B$ is trivial cofibration,
%then there exists a fibration $g:Y_B\to B$, whose restriction
%to $A$ is $f$. Moreover, $g$ is a left if and only if $f$ is left.
%\end{Emp}

\begin{Emp} \label{E:pfspacesb}
{\bf Proof of \rp{spaces} (b).}
To show that $\S^{(n)}\to \S^{n+1}$ is a fibration we have to
show that for every trivial cofibration $A\hra B$, the map
$(\S^{(n)})^B\to (\S^{n+1})^B\times_{(\S^{n+1})^A}(\S^{(n)})^A$ is
surjective. Using the observation of \re{remsn},  we have to show
that for every $(n+1)$-tuple of left fibrations
$E^{(0)},\ldots,E^{(n)}$ of over $B$, every diagram
$E^{(0)}|_A\to\ldots\to E^{(n)}|_A$ over $A$ extends to a diagram
$E^{(0)}|_B\to\ldots\to E^{(n)}|_B$ over $B$. For this enough to
show that each restriction map
\[
\Hom_B(E^{(i)},E^{(i+1)})\to
\Hom_B(E^{(i)}|_A,E^{(i+1)})=\Hom_A(E^{(i)}|_A,E^{(i+1)}|_A)
\]
is surjective. Since $E^{(i)}\to B$ is a fibration, $A\to B$ is a
trivial cofibration, and the Reedy model structure is proper, we
get that $E^{(i)}|_A\to E^{(i)}$ is a trivial cofibration. Thus
the assertion follows from the fact $E^{(i+1)}\to B$ is a
fibration.

To show that $\S^{(we)}\to \S^2$ is a fibration, we argue as above
word-by-word, and note that since $E^{(i)}|_A\to E^{(i)}$ are
trivial cofibrations, it follows from 2-out-of-3 that the morphism
$E^{(0)}\to E^{(1)}$ is a weak equivalence if and only if its
restriction $E^{(0)}|_A\to E^{(1)}|_A$ is a weak equivalence.
\end{Emp}

\begin{Emp} \label{E:pfspacesc}
{\bf Proof of \rp{spaces} (c).} We have to show that for every cofibration $A\hra B$, the map 
$(\S^{(we)})^B\to \S^B\times_{\S^A}(\S^{(we)})^A$ is
surjective. Let $E^{(0)}\to B$ belong to $(LFib/B)_{\C{U}}$, $E_A^{(1)}\to A$
belong to $(LFib/A)_{\C{U}}$, and $\phi':E^{(0)}|_A\to E_A^{(1)}$ be 
a weak equivalence over $A$. We have to show that $\phi'$
extends to a weak equivalence $\phi:E^{(0)}\to E^{(1)}$ over $B$
such that $E^{(1)}\to B$ belongs to $(LFib/B)_{\C{U}}$.

By \rl{pushout}, the pushout $E^{(0)}\sqcup_{E^{(0)}|_A} E_A^{(1)}$
is a quasifibration over $B$, whose restriction to $A$ is a
fibration $E_A^{(1)}\to A$. Therefore, by \rco{almost}, there
exists a fibrant replacement $E^{(1)}\to B$ in $(sSp/B)_{\C{U}}$
such that $E^{(1)}|_A=E_A^{(1)}$.

By construction, $\phi'$ extends to a morphism
$\phi:E^{(0)}\overset{\phi_1}{\lra} E^{(0)}\sqcup_{E^{(0)}|_A}
E_A^{(1)}\overset{\phi_2}{\lra} E^{(1)}$ of fibrations over $B$.
Moreover, since $\phi'$ is a weak equivalence, its pushout
$\phi_1$ is a weak equivalence, hence $\phi$ is a weak equivalence
as well. Since $E^{(0)}\to B$ is a left fibration, $E^{(1)}\to B$ is a left
fibration by \rl{lqf}.
\end{Emp}

\begin{Emp} \label{E:pfspacesd}
{\bf Proof of \rp{spaces} (d).} Using observations of \re{remsn}
(b), it remains to show that for every $a\in\S^{(we)}$,
$b\in\S^{(1)}$ such that $a\sim b$ in $\S^{(1)}$, we have $b\in
\S^{(we)}$. Since $\S^{(we)}\to \S\times \S$ is a fibration, while
$\S^{(1)}$ is fibrant (by \rp{spaces} (a),(b)), we may assume that
$a\sim b$ in some fiber of $\S^{(1)}\to\S\times\S$.

Then $a$ corresponds to a weak equivalence $\phi_a:E^{(0)}\to
E^{(1)}$, $b$ corresponds to a morphism $\phi_b:E^{(0)}\to
E^{(1)}$, assumption $a\sim b$ means that the maps $\phi_a$ and
$\phi_b$ are homotopic. Then $b$ is a weak equivalence, hence
$b\in\S^{(we)}$.
\end{Emp}

\begin{Emp} \label{E:pfsna}
{\bf Proof of \rp{sn} (a).} By \rl{univ}, for every $K\in sSp$ the
set $\Hom(K, \S^{\Dt[1]})= \Hom(K\times\Dt[1], \S)$ is in
bijection with the set of left fibrations $E\to K\times\Dt[1]$ in
$(sSp/K\times\Dt[1])_{\C{U}}$, while the set $\Hom(K, \S^2)$ is in
bijection with the set of pairs of left fibrations $E^{(0)}\to
K,E^{(1)}\to K$ in $(sSp/K)_{\C{U}}$. Moreover, the projection
$\S^{(we)}\to\S^2$ sends $E\to K\times\Dt[1]$ to a pair
$E|_0:=E|_{K\times\{0\}}$ and $E|_1:=E|_{K\times\{1\}}$, and the
set $\Hom(K, \S^{(we)})$ is in bijection with the set of weak
equivalences $\phi:E^{(0)}\to E^{(1)}$ of left fibrations over $K$
(by \re{remsn}).

By \rl{heprop} (c) for every left fibration $E\to
K\times\Dt[1]$ there exists a weak equivalence $\phi:E|_0\to E|_1$
of left fibrations over $K$. We take $K:=\S^{\Dt[1]}$ and $E$ be
the left fibration, corresponding to $\Id_K$, then $\phi$ gives
rise to the morphism $\psi:\S^{\Dt[1]}\to \S^{(we)}$ over $\S^2$.

Moreover, $p_0:\S^{(we)}\to\S$ is a  trivial fibration by \rp{spaces}
(c), while $\dt_0:\S^{\Dt[1]}\to\S$ is a trivial fibration, because 
$\S$ is fibrant (see \rp{spaces} (a)), and $\Dt[0]\to\Dt[1]$ is
a trivial cofibration. Then $\psi$ is a weak equivalence by
2-out-of-3, hence a homotopy equivalence by \rl{heprop} (b).
\end{Emp}

\begin{Emp} \label{E:pfpresh}
{\bf Proof of \rco{presh}.} The homotopy equivalence
$\S^{(we)}\to \S^{\Dt[1]}$ over $\S^2$
from \rp{sn} (a) induces a homotopy equivalence
\[
\Map(K,\S^{(we)})\to\Map(K,\S^{\Dt[1]})=
\Map(\Dt[1],\S^K)
\]
over $\Map(K,\S^2)$, hence a homotopy equivalence
between fibers over $(\al,\beta)\in\Map(K,\S^2)$. Since
fiber $\Map(K,\S^{(we)})_{\al,\beta}\neq\emptyset$ means
that $E_{\al}$ and $E_{\beta}$ are homotopy equivalent over $K$
(by \re{remsn}(b)), while
$\Map(K,\S^{\Dt[1]})_{\al,\beta}\neq\emptyset$ means
that $\al\sim\beta$ in $\S^K$, we get the assertion.
\end{Emp}

\begin{Emp} \label{E:step1}
{\bf Construction of $\psi^{(n)}$.} By \re{remsn}, the identity
map $\Id_{\S^{(n)}}$ corresponds to a diagram
$\phi:E^{(0)}\overset{\phi_1}{\lra} \ldots \overset{\phi_n}{\lra}
E^{(n)}$ of left fibrations over $\S^{(n)}$ in $(sSp/\S^{(n)})_{\C{U}}$. To
define a morphism $\psi^{(n)}:\S^{(n)}\to \S^{F[n]}$ over
$\S^{n+1}$, we have to construct a left fibration $E\to
\S^{(n)}\times F[n]$ in $(sSp/\S^{(n)}\times F[n])_{\C{U}}$ such
that $E|_i:=E|_{\S^{(n)}\times\{i\}}=E^{(i)}$.

Consider iterated discrete cylinder $p:Cyl^d(\phi)\to
\S^{(n)}\times F[n]$ (see \re{itercyl}). Then $p$ is a left
quasifibration in $(sSp/\S^{(n)}\times F[n])_{\C{U}}$ (see
\rl{disc}) such that the restriction $Cyl^d(\phi)|_i=E^{(i)}$ is a
fibration over $\S^{(n)}$. Thus, by \rco{almost}, there exists a
fibrant replacement $p':E\to \S^{(n)}\times F[n]$ of $p$ in
$(sSp/\S^{(n)}\times F[n])_{\C{U}}$ such that $E|_i=
Cyl^d(\phi)|_i$ for all $i$. Then $p'$ is a left fibration 
by \rco{lqf}.
\end{Emp}

\begin{Emp} \label{E:step1'}
{\bf Uniqueness of $\psi^{(n)}$.} Notice that if $E'$ is another
fibrant replacement of $Cyl^d(\phi)\to \S^{(n)}\times F[n]$ such
that $E'|_i= Cyl^d(\phi)|_i$ for all $i$, then there exists a weak
equivalence $E\to E'$ over $\S^{(n)}\times F[n]$, which is
identity over each $\{i\}\in F[n]$. Now it follows from
\rco{presh} that the morphism $\psi'^{(n)}:\S^{(n)}\to
\S^{F[n]}$, corresponding to $E'$, is homotopic to $\psi^{(n)}$
over $\S^{F[n]}$.
\end{Emp}

\begin{Emp} \label{E:step2}
{\bf Modular interpretation of $\psi^{(n)}$}. Note that a morphism
$\varphi:K\to \S^{(n)}$ corresponds to the diagram
$\varphi^*(\phi)$ of left fibrations over $K$, while the
composition $\psi^{(n)}\circ \varphi:K\to \S^{F[n]}$ corresponds
to the left fibration $\varphi^*(E)$. Recall that $Cyl^d(\phi)\to
K\times F[n]$ is a quasifibration and that $E$ is a fibrant
replacement of $Cyl^d(\phi)$ such that $E|_i=E^{(i)}$ for all $i$.
Hence it follows from \rco{fibrep} that $\varphi^*(E)$ is a
fibrant replacement of
$\varphi^*(Cyl^d(\phi))=Cyl^d(\varphi^*(\phi))$ such that
$\varphi^*(E)|_i=\varphi^*(E^{(i)})$ for all $i$.
\end{Emp}

\begin{Emp} \label{E:pfsnc}
{\bf Proof of \rp{sn} (c).} By \re{remsn}, the composition 
$\mu^*\circ \psi^{(n)}:\S^{(n)}\to \S^{F[m]}$ corresponds to the left fibration
$\mu^*(E)\to \S^{(n)}\times F[m]$, which is as in \re{step2} is a fibrant 
replacement of $\mu^*(Cyl^d(\phi))=Cyl^d(\mu^*(\phi))$. Similarly, $\psi^{(m)}\circ\mu^*:\S^{(n)}\to \S^{F[m]}$
also corresponds to a fibrant replacement of $Cyl^d(\mu^*(\phi))$. Since all fibrant
replacement are weakly equivalent, two compositions are homotopic
by \rco{presh}.
\end{Emp}

It remains to show that $\psi=\psi^{(n)}$ is a homotopy equivalence over $\S^{n+1}$. 

\begin{Emp} \label{E:step4}
{\bf  Reduction.} By \rl{heprop} (a), we have to show that  
for every map $\eta:M\to \S^{n+1}$, the  map
$\pi_0(\psi/\eta):\pi_0(\Map_{\S^{n+1}}(M,\S^{(n)}))\to
\pi_0(\Map_{\S^{n+1}}(M,\S^{F[n]}))$, induced by $\psi$, 
is a bijection.

Let $\eta$ corresponds to an $(n+1)$-tuple $H^{(0)},\ldots,
H^{(n)}$ of left fibrations over $M$. Then $\varphi\in
\Hom_{\S^{n+1}}(M,\S^{(n)})$ corresponds to  diagrams
$\varphi:H^{(0)}\overset{\varphi_1}{\lra}\ldots\overset{\varphi_n}{\lra}
H^{(n)}$ over $M$, and $\tau\in \Hom_{\S^{n+1}}(M,\S^{F[n]})$
corresponds to left fibrations $H\to M\times F[n]$ such that
$H|_i=H^{(i)}$ for all $i$.

Using \rco{presh} (a) and \re{step2}, we see that
$\psi\circ\varphi\sim \tau$ in $\Map_{\S^{n+1}}(M,\S^{F[n]})$  if
and only if there exists a weak equivalence $\nu:Cyl^d(\varphi)\to
H$ over $M\times F[n]$ such that  $\nu|_i:H^{(i)}\to H|_i=H^{(i)}$
is the identity. Moreover, by \re{fn} (e), this happens if and only if 
there exists a morphism $\nu:Cyl^d(\varphi)\to H$ over $M\times F[n]$ such that
$\nu|_i:H^{(i)}\to H|_i=H^{(i)}$ is $\Id_{H^{(n)}}$ for all $i$.
\end{Emp}

\begin{Emp} \label{E:step5}
{\bf Proof of surjectivity of $\pi_0(\psi/\eta)$.} We have to show
that for every left fibration $H\to M\times F[n]$ such that
$H|_i=H^{(i)}$ for all $i$ there exists a diagram $\varphi$ and a
morphism $\nu:Cyl^d(\varphi)\to H$ over $M\times F[n]$ such that
each $\nu|_i:H^{(i)}\to H|_i=H^{(i)}$ is the identity. We
construct $\varphi$ and $\nu$ by induction on $n$. If $n=0$, then
$\varphi$ is empty, $Cyl^d(\varphi)=H^{(0)}=H$, so $\nu=\Id_H$
does the job.

Assume that $n>0$. By induction hypothesis, there exists a diagram
\\ $\varphi^{(1)}:H^{(1)}\overset{\varphi_2}{\lra}\ldots
\overset{\varphi_n}{\lra} H^{(n)}$ over $M$ and a morphism $e^1
Cyl^d(\varphi(1))\to H|_{e^1 F[n-1]}\subset H$ over $M\times F[n]$
such that $\nu|_i:H^{(i)}\to H|_i=H^{(i)}$ is the identity for all
$i>0$. In particular, we have a morphism $\nu[1]:H^{(1)}\times
e^1 F[n-1]\to e^1 Cyl^d(\varphi(1))\to H$ over $M\times F[n]$ such that
$\nu[1]|_1=\Id_{H^{(1)}}$.

Since $Cyl^d(\varphi)=(H^{(0)}\times F[n])\sqcup_{(H^{(0)}\times
e^1 F[n-1])}e^1 Cyl^d(\varphi(1))$, it remains to construct a
morphism $\varphi_1:H^{(0)}\to H^{(1)}$ over $M$ and a morphism
$\nu[0]:H^{(0)}\times F[n]\to H$ over $M\times F[n]$ such that
$\nu[0]|_0=\Id_{H^{(0)}}$, and restriction $\nu[0]|_{e^1 F[n-1]}$
decomposes as a composition $H^{(0)}\times e^1
F[n-1]\overset{\varphi_1}{\lra} H^{(1)}\times e^1
F[n-1]\overset{\nu[1]}{\lra}H$.

Since $H\to M\times F[n]$ is a left fibration, the inclusion
$H^{(0)}=H|_0\hra H$ extends to a morphism $\nu'[0]:H^{(0)}\times
e^0 F[1]\to H|_{e^0 F[1]}\subset H$ over $M\times F[n]$ (see
\rl{lfibr} (a)). Denote $\nu'[0]|_1:H^{(0)}\to H^{(1)}$ by
$\varphi_1$, and define $\nu''[0]:H^{(0)}\times e^1 F[n-1]\to
H|_{e^1 F[n-1]}\subset H$ to be the composition
$\nu[1]\circ\varphi_1$. Then $\nu'[0]$ and $\nu''[0]$ define a
morphism $H^{(0)}\times (e^0 F[1]\sqcup_{e^1 F[0]}e^1 F[n-1])\to H$
over $M\times F[n]$, which by \rl{lfibr} (b) can
be extended to all of $H^{(0)}\times F[n]$. 
\end{Emp}

\begin{Emp} \label{E:step6}
{\bf Proof of injectivity of $\pi_0(\psi/\eta)$.} Fix a left
fibration $H\to M\times F[n]$, and consider all diagrams
$\varphi:H^{(0)}\overset{\varphi_1}{\lra}\ldots
\overset{\varphi_n}{\lra} H^{(n)}$ over $M$ for which there exists
a morphism $\nu:Cyl^d(\varphi)\to H$ over $M\times F[n]$ such that
each $\nu|_j$ is the identity. We have to show that each 
$\pi_0(\varphi_j)\in\pi_0(\Map_{M}(H^{(j)},H^{(j+1)}))$
only depends on $H$.

Consider canonical embedding $\iota_j:H^{(j)}\times e^j F[n-j]\to
Cyl^d(\varphi)$ (see \re{itercyl}). Then the composition
$\nu\circ\iota_j:H^{(j)}\times e^j F[n-j]\to H$ is such that 
$(\nu\circ\iota)|_j:H^{(j)}\to H^{(j)}$ is $\Id_{H^{(j)}}$, while
$(\nu\circ\iota)|_{j+1}:H^{(j)}\to H^{(j+1)}$ is $\varphi_j$. Thus
it remains to show that each 
$\pi_0(\nu\circ\iota_j)\in\pi_0(\Map_{M\times F[n]}(H^{(j)}\times e^j F[n-j],
H))$ only depends on $H$. Since the restriction map $\Map_{M\times
F[n]}(H^{(j)}\times e^j F[n-j], H)\to\Map_{M\times
F[n]}(H^{(j)}\times\{j\}, H)$ is a trivial fibration (by \rl{lfibr} (b)), while
$(\nu\circ\iota_j)|_j=\Id_{H^{(j)}}$, the assertion follows from
\re{pi0} (b).
\end{Emp}

%\begin{Emp} \label{E:pflsn}
%{\bf Proof of \rl{sn}}. (a) We argue as in \re{pfsnc}. We want to
%show that for every map $\varphi:K\to \S_{\C{U}}^{(n)}$ the
%composition $K\overset{\varphi}{\lra}
%\S_{\C{U}}^{(n)}\overset{\psi_{\C{U}}^{(n)}}{\lra}
%\S_{\C{U}}^{F[n]}\overset{\iota}{\lra} \S_{\C{V}}^{F[n]}$ is
%homotopic to $K\overset{\varphi}{\lra}
%\S_{\C{U}}^{(n)}\overset{\iota}{\lra}\S_{\C{V}}^{(n)}
%\overset{\psi_{\C{V}}^{(n)}}{\lra} \S_{\C{V}}^{F[n]}$. But
% both compositions correspond to
%fibrant replacements of $Cyl^d(\varphi^*(\phi))$ in
%$(sSp/\S^{(n)}\times K)_{\C{V}}$.
%(b) a consequence of a combination of (a) and \rco{presh} (b).
%\end{Emp}

\end{document}